\documentclass[a4paper, 11pt, leqno]{amsart}

\usepackage{amsmath, amssymb, amsfonts, amsthm}
\usepackage{graphicx} 
\usepackage{mathtools}
\usepackage{xcolor}

\usepackage[T1]{fontenc}
\usepackage{geometry}
\usepackage{hyperref}
\usepackage{color}

\usepackage[shortlabels]{enumitem}  

\theoremstyle{plain}
\newtheorem{thm}{Theorem}[section]
\newtheorem{prop}[thm]{Proposition}
\newtheorem{lem}[thm]{Lemma}
\newtheorem{cor}[thm]{Corollary}

\theoremstyle{definition}

\theoremstyle{remark}
\newtheorem{rem}[thm]{Remark}

\title[Blow-up for energy supercritical defocusing NLS]{Blow-up for energy supercritical defocusing 
nonlinear Schr\"odinger equations in dimensions three, four and five}

\author{Kevin Buck, Alvaro Carballeira, Javier G\'omez-Serrano, Jia Shi}

\date{}

\begin{document}

\raggedbottom

\begin{abstract}
We prove finite time blow-up from smooth, radial, compactly supported initial data for the energy supercritical defocusing nonlinear Schr\"odinger equation $i\partial_tu+\Delta u-|u|^{p-1}u=0$ in $\mathbb R^d$, for $(d,p)\in\{(3,27),(4,9),(5,7)\}$. The solutions are asymptotically self-similar, their $L^\infty$ norm grows at the scaling rate $(T-t)^{-1(p-1)}$, and their critical Sobolev norm $\dot H^{s_c}$ diverges. This closes the conjecture of Bourgain \cite{Bourgain2000} on global well-posedness and scattering for the defocusing energy supercritical equation in dimensions three and four. The proof relies on a shooting argument and is computer-assisted.
\end{abstract}

\maketitle

\tableofcontents

\section{Introduction}

We consider the defocusing nonlinear Schr\"odinger equation
\begin{equation}\label{eq:nls}
	i\partial_tu+\Delta u-|u|^{p-1}u=0,
	\qquad (t,x)\in[0,T)\times\mathbb R^d.
\end{equation}
The scaling symmetry $u_\lambda(t,x)=\lambda^{\frac{2}{p-1}}u(\lambda^2t,\lambda x)$ leaves the homogeneous Sobolev norm $\dot H^{s_c}$ invariant, where
\begin{equation*}
	s_c=\frac d2-\frac{2}{p-1}.
\end{equation*}
The energy supercritical regime corresponds to $s_c>1$, or equivalently $p>1+\frac{4}{d-2}$ for $d\geq3$. In this range, the scaling critical regularity lies above the energy regularity, and conservation of mass and energy does not provide an a priori bound on the critical Sobolev norm. Throughout the paper, we restrict to $(d,p)\in\{(3,27),(4,9),(5,7)\}$, all in the energy supercritical regime. Our main result constructs finite-time, asymptotically self-similar blow-up solutions from smooth, radial, compactly supported initial data.

\begin{thm}\label{thm:blowup}
	For every $(d,p)\in\{(3,27),(4,9),(5,7)\}$, there are radial initial data
	$u_0\in C_c^\infty(\mathbb R^d;\mathbb C)$ whose corresponding solution
	of \eqref{eq:nls} blows up at some finite time $T>0$. The solution belongs to
	$C([0,T);H^m(\mathbb R^d;\mathbb C))$ for every integer $m\geq0$.
	Moreover, there are $\Omega\in\mathbb R$,
	$\mu>0$, and a smooth radial self-similar profile $Q$, such that, with
	$\tau=\log\frac{T}{T-t}$,
	\begin{equation*}
		\left\|(T-t)^{\frac{1}{p-1}}e^{i\Omega\tau}
		u\left(t,\sqrt{T-t}\,\cdot\right)-Q\right\|_{L^\infty}
		=O((T-t)^\mu)
		\quad\text{as }t\uparrow T.
	\end{equation*}
	Consequently,
	\begin{equation}\label{eq:blowup-rate}
		\|u(t)\|_{L^\infty}
		=\frac{\|Q\|_{L^\infty}+O((T-t)^\mu)}{(T-t)^{\frac{1}{p-1}}}
		\quad\text{as }t\uparrow T,
	\end{equation}
	and
	\begin{equation*}
		\lim_{t\uparrow T}\|u(t)\|_{\dot H^{s_c}}=\infty.
	\end{equation*}
\end{thm}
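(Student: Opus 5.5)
The proof follows the now-standard scheme for self-similar blow-up in supercritical problems, in the spirit of Merle--Rapha\"el--Rodnianski--Szeftel for defocusing NLS. It has three parts: construct a smooth self-similar profile, prove finite-codimension stability of it in self-similar variables, and truncate to compactly supported data.

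\textbf{Step 1: the profile.} We pass to self-similar variables
\begin{equation*}
	u(t,x)=(T-t)^{-\frac{1}{p-1}}e^{-i\Omega\tau}\,v\bigl(\tau,x/\sqrt{T-t}\bigr),\qquad \tau=\log\tfrac{T}{T-t}.
\end{equation*}
A stationary $v=Q(y)$ then solves the radial ODE
\begin{equation*}
	\Delta Q-\tfrac{i}{2}\Bigl(\tfrac{2}{p-1}Q+y\cdot\nabla Q\Bigr)+\Omega Q-|Q|^{p-1}Q=0 .
\end{equation*}
Its solutions regular at $r=0$ form a family parametrized by $Q(0)$, which we may normalize to be real by phase invariance. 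Its solutions at $r=\infty$ behave like $c\,r^{-2/(p-1)-2i\Omega}$ (the slow branch) or like the rapidly oscillating/Gaussian branch $e^{ir^2/4}r^{\dots}$, and we require the slow branch only, which is the smoothness condition for the blow-up solution. Matching the two families at some radius $r_0$ is a shooting problem: complex parameters on each side, with the correct parameter count so that solutions are isolated. We solve it by a computer-assisted argument. Concretely, we compute an approximate $(Q(0),\Omega,c)$ numerically, enclose the local solution near $0$ by a rigorous power series with validated tail bounds, enclose the solution near $\infty$ by an asymptotic expansion plus a contraction argument on $[R,\infty)$, and integrate the ODE on $[0,R]$ with interval arithmetic (Taylor models). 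A Newton--Kantorovich / interval Krawczyk argument on the finite-dimensional matching map then gives an exact solution. This is why the specific pairs $(d,p)$ appear: the numerics must succeed for each.

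\textbf{Step 2: linear stability.} We linearize around $Q$ in self-similar variables. The linearized operator $\mathcal L$ acting on $(\mathrm{Re},\mathrm{Im})$ is a non-self-adjoint compact perturbation of a dissipative operator when it is considered on weighted $H^k$ spaces with large $k$ and spatial localization; here the $-\frac{1}{2}y\cdot\nabla$ term yields damping $\propto -k$. Its spectrum in $\{\mathrm{Re}\lambda>-\delta\}$ therefore consists of finitely many eigenvalues. We identify them as the trivial symmetry modes (time translation $\lambda=1$ and phase rotation $\lambda=0$) together with possibly finitely many genuine unstable ones. We count the unstable ones rigorously, again via computer-assisted bounds on the eigenvalue ODEs or on a finite-dimensional Galerkin approximation plus perturbation estimates.

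\textbf{Step 3: nonlinear argument and conclusion.} We prove nonlinear stability modulo the finitely many unstable directions. We use a bootstrap in the weighted energy norm, modulating $T$ and $\Omega$ to absorb the symmetry modes and choosing the unstable components by a Brouwer topological argument. This yields decay $e^{-\mu\tau}=(T-t)^\mu/T^\mu$. The exterior region is handled by finite-speed-like propagation estimates for the truncated profile, as in Merle--Rapha\"el--Rodnianski--Szeftel. The initial data are $\chi Q$ rescaled, plus a small smooth compactly supported correction in the unstable directions, and so lie in $C_c^\infty$. Persistence of $H^m$ regularity up to $T$ is standard local theory, since the nonlinearity is smooth for odd $p$. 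The $L^\infty$ convergence follows by Sobolev embedding from the weighted $H^k$ control on compact sets together with exterior bounds, and this gives \eqref{eq:blowup-rate}. Finally, $\|u(t)\|_{\dot H^{s_c}}$ is scale-invariant, and the $\dot H^{s_c}$ norm of $Q$ restricted to $|y|\lesssim (T-t)^{-1/2}$ diverges logarithmically, because $|\nabla^{s_c}Q|\sim r^{-d/2}$ at infinity. This gives $\|u(t)\|_{\dot H^{s_c}}\to\infty$.

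The main obstacle is Step 1 together with the spectral count in Step 2. Rigorously controlling the oscillatory ODE on a long interval with interval arithmetic, and certifying the precise number of unstable eigenvalues of a non-self-adjoint operator, is where the computer assistance is essential and delicate. I would first attempt the profile via a validated Taylor-model integrator combined with explicit asymptotic tails, and then attempt the spectral count.
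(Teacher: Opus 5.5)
Your Step 1 is essentially the paper's profile construction: a Taylor enclosure with contraction at the origin, a slow asymptotic expansion corrected through a right inverse built from the slow and fast modes, validated integration in between, and a Brouwer/Krawczyk argument on three real matching conditions in $(A,\Omega,K)$ modulo the constant phase. There is one sign slip. The ansatz produces $+i\bigl(\frac{1}{p-1}Q+\frac12 y\cdot\nabla Q\bigr)$, so the slow mode is $r^{-2/(p-1)+2i\Omega}$ and the fast mode carries $e^{-ir^2/4}$. Your equation is the one solved by $\overline Q$.

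\textbf{The gap: no space that handles the tail.} The genuine gap is in Steps 2--3, where you never fix a space that handles the infinite-energy tail $Q\sim r^{-2/(p-1)}$. You need a norm with three properties. First, $Q$ must belong to it and $(1-\chi_R)Q$ must be small. Second, the linearized flow must be dissipative modulo a compact operator. Third, it must control $L^\infty$ on all of $\mathbb R^d$, since otherwise $\|V(\tau)-Q\|_{L^\infty}=O(e^{-\mu\tau})$ does not follow.

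Your ``weighted $H^k$ with spatial localization'' plus ``finite-speed-like propagation'' in the exterior does not provide this. The Schr\"odinger flow has no finite speed of propagation. Spatial weights do not commute with $i\Delta$, and the resulting first-order terms are not controlled by a top-order weighted energy estimate. Local weighted control also gives nothing with a rate on $|y|\gtrsim1$; those exterior bounds are exactly what is left unproved.

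The missing idea is to work globally on $\mathcal X_s=\dot H^s\cap\dot H^{k_\star}$ with $s_c<s<d/2$ and $k_\star$ large. This space has all three properties:
\begin{enumerate}[label=\textup{(\alph*)}]
\item Because $C_f=0$ and the slow tail has all derivatives of order $>s_c$ in $L^2$, $Q\in\mathcal X_s$ and $\|(1-\chi_R)Q\|_{\dot H^k}\lesssim R^{s_c-k}\to0$ for $k=s,k_\star$.
\item $\mathcal X_s$ is an algebra embedded in $L^\infty$, because $s<d/2<k_\star$.
\item The free rescaled group satisfies $\|S_0(\tau)h\|_{\dot H^k}=e^{(s_c-k)\tau/2}\|h\|_{\dot H^k}$ exactly.
\item The potentials decay like $\langle y\rangle^{-2-j}$. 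Their contribution is therefore compact at level $s$. At level $k_\star$ it is a compact commutator plus a bounded term, which is absorbed by taking $k_\star>d/2+2\|V_{\mathrm{off}}\|_\infty$.
\end{enumerate}
With this space no interior/exterior splitting is needed.

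\textbf{The spectral count and modulation are unnecessary.} The rigorous count of unstable eigenvalues, which you single out as the main obstacle, is not needed, and neither is modulation. The energy estimate gives $\omega_{\mathrm{ess}}(\mathbf S_Q)\le\gamma_s<0$. Hence there are only finitely many generalized eigenvalues with $\operatorname{Re}\lambda\ge0$, including the phase mode $\lambda=0$ and the time-translation mode $\lambda=1$. The paper projects all of them out at once and builds the stable manifold by a Lyapunov--Perron fixed point, $v(0)=\xi+\Phi(\xi)$. It then places $\chi_RQ+g_R$ on this manifold by a contraction for $g_R$ in a fixed finite-dimensional subspace of $C^\infty_{c,\mathrm{rad}}$ on which $P_{\mathrm{un}}$ is an isomorphism. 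In any case, the symmetry behind the eigenvalue-$0$ mode $iQ$ is a constant phase rotation, not a change of $\Omega$.

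\textbf{The $\dot H^{s_c}$ divergence.} Your logarithmic-divergence heuristic is not a proof as stated. The paper's argument is short: $Q\notin L^{d(p-1)/2}$, Fatou's lemma applies along the pointwise convergence $V(\tau)\to Q$, and $\dot H^{s_c}\hookrightarrow L^{d(p-1)/2}$ together with scale invariance transfers the divergence to $\|u(t)\|_{\dot H^{s_c}}$.
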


\begin{rem}
Merle, Rapha\"el, Rodnianski and Szeftel \cite{MerleRaphaelRodnianskiSzeftel2022} constructed smooth, well localized radial solutions blowing up in finite time for $(d,p)$ equal to $(5,9)$, $(6,5)$, $(8,3)$ or $(9,3)$. Theorem~\ref{thm:blowup} extends the existence of finite time blow-up to dimensions three and four, which were left open in their work, and to the nonlinearity $p=7$ in dimension five. The blow-up dynamics in dimension five are also different. Their construction is governed at leading order by a self-similar compressible Euler flow, with quantum pressure treated as a perturbation, and yields a blow-up rate which breaks the NLS scaling. In contrast, our solutions are asymptotically self-similar at the NLS scaling and obey the rate~\eqref{eq:blowup-rate}.
\end{rem}

\begin{rem}
The choice of the three pairs in Theorem~\ref{thm:blowup} is related to the computer-assisted profile construction. As explained in the proof of Lemma~\ref{lem:pure-slow-computer-bounds}, the profile ODE is numerically easier to solve for large $p$. Thus, we first locate a profile in that regime and then use a continuation method in $(d,p)$ to the desired pair. We selected $(3,27)$, $(4,9)$, and $(5,7)$ because this continuation and the subsequent estimates close with enough margin for these cases. Our nonrigorous shooting experiments find profiles for many other energy-supercritical parameters, although the rigorous construction in this paper is restricted to the stated cases.
\end{rem}

\subsection{Historical background}

The flow \eqref{eq:nls} conserves the mass and energy functionals
\begin{equation}
	\label{eq:energy}
	M(u) = \int_{\mathbb{R}^{d}}|u|^2\,dx, \qquad
	E(u) = \frac12\int_{\mathbb{R}^{d}}|\nabla u|^2\,dx
	+\frac1{p+1}\int_{\mathbb{R}^{d}}|u|^{p+1}\,dx,
\end{equation}
and it is invariant under the scaling $u_\lambda(t,x)=\lambda^{\frac{2}{p-1}}u(\lambda^2t,\lambda x)$, which preserves the homogeneous norm $\dot H^{s_c}$. The Cauchy problem is locally well-posed in $H^s$ for $s\geq s_c$ by Strichartz estimates \cite{GinibreVelo1979,CazenaveWeissler1990,Cazenave2003}, and the conservation laws yield global well-posedness in the mass subcritical and energy subcritical ranges. At the critical levels, the conserved quantities control precisely the scaling invariant norm, and the defocusing problem is by now completely understood: global well-posedness and scattering hold in the energy-critical case \cite{Bourgain1999,Grillakis2000,CollianderKeelStaffilaniTakaokaTao2008,RyckmanVisan2007,Visan2007} and in the mass-critical case \cite{TaoVisanZhang2007,KillipTaoVisan2009,Dodson2012,Dodson2016a,Dodson2016b}. In the energy supercritical regime $s_c>1$ considered here, no conservation law controls the critical norm, and the available results are conditional. Using the concentration compactness and rigidity method of Kenig--Merle \cite{KenigMerle2006,KenigMerle2008,KenigMerle2010,KenigMerle2011}, Killip--Visan \cite{KillipVisan2010} and Li--Li \cite{LiLi2022} proved that a solution which remains bounded in $\dot H^{s_c}$ is global and scatters; see \cite{Liu2026} for the most recent extension of the admissible exponents in high dimensions and \cite{Bulut2023} for blow-up criteria below the scaling regularity. In particular, the maximal solution issued from smooth data either is global or its $\dot H^{s_c}$ norm blows up in finite time. An analogous conditional scattering result for the defocusing $\dot H^{1/2}$-critical equation in dimensions $d\geq4$ was proved by Murphy \cite{Murphy2014}.

This collection of facts led to the belief that the a priori bound on the critical norm should always hold. Bourgain \cite{Bourgain2000} explicitly conjectured that the defocusing energy supercritical Schr\"odinger and wave equations are globally well-posed and scatter; see also the discussions in \cite{KenigMerle2011,KillipVisan2010}. Numerical simulations of Colliander--Simpson--Sulem \cite{CollianderSimpsonSulem2010} supported this belief. Global regularity was proved for logarithmically supercritical wave equations \cite{Tao2007,ColomboHaffter2023} and for a supercritical wave equation in two dimensions \cite{Struwe2011}, and large global solutions were constructed for supercritical wave equations in \cite{KriegerSchlag2017} and for the energy supercritical NLS in \cite{BeceanuDengSofferWu2019}. Indications in the opposite direction were the finite time blow-up constructed by Tao \cite{Tao2016,Tao2018} for defocusing wave and Schr\"odinger \emph{systems} sharing the same scaling and conservation laws as \eqref{eq:nls}, and the norm inflation results at low supercritical regularity of \cite{ChristCollianderTao2003,Thomann2008}.

The conjecture was disproved by Merle--Rapha\"el--Rodnianski--Szeftel \cite{MerleRaphaelRodnianskiSzeftel2022}, who constructed $C^\infty$ well localized radial initial data leading to finite time blow-up of \eqref{eq:nls} for $(d,p)$ equal to $(5,9)$, $(6,5)$, $(8,3)$ or $(9,3)$. Their mechanism is hydrodynamical. Through the Madelung transform $u=\sqrt\rho\,e^{i\psi}$ \cite{Madelung1927}, equation \eqref{eq:nls} becomes the compressible Euler system with adiabatic exponent $\gamma=\frac{p+1}{2}$, forced by a quantum pressure term proportional to $\nabla\big(\Delta\sqrt\rho/\sqrt\rho\big)$. In the blow-up regime of \cite{MerleRaphaelRodnianskiSzeftel2022}, the quantum pressure is a perturbation. The leading order dynamics are those of compressible Euler, and the blow-up profiles are the smooth radial imploding self-similar profiles constructed in the companion paper \cite{MerleRaphaelRodnianskiSzeftel2022I}; see also \cite{MerleRaphaelRodnianskiSzeftel2022II} for the corresponding singularity formation for the compressible Euler and Navier--Stokes equations. The Eulerian regime requires the self-similar blow-up speed $r$ to exceed $2$, which is only possible when $d\geq5$, and the existence of blow-up solutions in dimensions $d=3,4$ was left open \cite[Section~1.4]{MerleRaphaelRodnianskiSzeftel2022}. The solutions obtained in this way are of type II, meaning that they blow up at a rate different from the self-similar one and break scaling. Building on their non-radial implosion construction for compressible fluids \cite{CaoLaboraGomezSerranoShiStaffilani2026Euler}, Cao-Labora--G\'omez-Serrano--Shi--Staffilani \cite{CaoLabora2024} extended this construction to non-radial data for $(d,p)=(8,3)$, with smooth periodic initial data on $\mathbb T^8$ and with smooth initial data on $\mathbb R^8$ which do not vanish at infinity. The hydrodynamical formulation of \eqref{eq:nls} with the quantum pressure retained has also been studied on its own in the theory of quantum fluids; see Antonelli--Marcati \cite{AntonelliMarcati2009,AntonelliMarcati2012} and Carles--Danchin--Saut \cite{CarlesDanchinSaut2012} for finite energy weak solutions and for the precise relation with \eqref{eq:nls}.

Exact self-similar solutions $u(t,x)=(T-t)^{-\frac{1}{p-1}}e^{-i\Omega\tau}Q(x/\sqrt{T-t})$ are the basic blow-up objects of the focusing mass supercritical NLS. Their existence was observed numerically; see \cite{BuddChenRussell1999, PlechacSverak2001} and the book \cite{SulemSulem1999}, and rigorous constructions are recent: Bahri--Martel--Rapha\"el \cite{BahriMartelRaphael2021} in the slightly supercritical regime, Dahne--Figueras \cite{DahneFigueras2026} far from the critical exponent, including the three dimensional cubic case, Donninger--Sch\"orkhuber \cite{DonningerSchorkhuber2024} and Donninger--Lichtnecker \cite{DonningerLichtnecker2025} for the cubic and nearly cubic three dimensional equations, and Cao-Labora--G\'omez-Serrano--Shi--Staffilani \cite{CaoLaboraGomezSerranoShiStaffilani2025} for the hydrodynamic formulation. Their finite codimensional stability was established by Li \cite{Li2023}; see also \cite{Li2025a,Li2025b}. In the defocusing case, on the contrary, admissible self-similar solutions were expected not to exist \cite[Section~1.3]{MerleRaphaelRodnianskiSzeftel2022}. Self-similar solutions with finite energy are indeed excluded by energy conservation and scaling, see \eqref{eq:self-similar-energy} below. Theorem~\ref{thm:blowup} shows that, once the finite energy requirement on the profile is dropped, smooth radial self-similar profiles of the defocusing equation exist for $(d,p)\in\{(3,27),(4,9),(5,7)\}$, and that they govern the blow-up of smooth, compactly supported initial data at the self-similar rate \eqref{eq:blowup-rate}. In the profile equation \eqref{eq:complex-profile} the Laplacian and the nonlinearity enter at the same order, so the quantum pressure is not a perturbation and no compressible Euler regime is involved. To the best of our knowledge, this is the first construction of finite time blow-up for the defocusing NLS in dimensions three and four.

One of the fundamental pillars of the proof relies on a computer-assisted argument. The main paradigm is to replace floating-point numbers on a computer with rigorous bounds. These bounds are then propagated through every operation performed by the computer, taking into account any error made throughout the process. We refer the reader to the books \cite{Tucker2011,NakaoPlumWatanabe2019} and to the survey \cite{GomezSerrano2019} for a general treatment of computer-assisted proofs in PDE. In our case, the computer certifies that the solution regular at the origin and the solution with a pure slow tail at infinity can be matched at an intermediate radius, following the shooting and matching strategy of Dahne--Figueras \cite{DahneFigueras2026}. Closest to our setting, van den Berg--Groothedde--Williams \cite{vanDenBergGrootheddeWilliams2015} proved the existence of a localized radial solution of a singular Ginzburg--Landau ordinary differential equation. They rewrite the equation as an integral equation by means of a Green function, and prove with computer assistance that the resulting map is a contraction around a numerical approximation. Computer-assisted constructions of self-similar blow-up profiles have appeared in \cite{BuckmasterCaoLaboraGomezSerrano2023,BuckmasterCaoLaboraGomezSerrano2025} for compressible Euler, in \cite{ChenHouHuang2022,ChenHou2022,ChenHou2025} for the Hou--Luo model, the Boussinesq and the incompressible Euler equations, and in \cite{DahneFigueras2026,DonningerSchorkhuber2024} for the focusing NLS. Our implementation uses the Arb library \cite{Johansson2017} and the code is available at \cite{GITHUB}.

\subsection{Strategy of the proof}

We explain the main steps in the proof of Theorem~\ref{thm:blowup}. The construction consists of finding a smooth self-similar profile with prescribed behavior at infinity, and then producing solutions with localized initial data which converge to this profile in rescaled variables. The first step is computer-assisted and the second follows from spectral estimates and a finite-codimensional nonlinear stability argument.

We seek a radial profile $Q$ and a frequency $\Omega\in\mathbb R$ such that
\begin{equation*}
	u(t,x)=(T-t)^{-\frac{1}{p-1}}e^{-i\Omega\tau}Q(y),
	\qquad \tau=\log\frac{T}{T-t},\qquad y=\frac{x}{\sqrt{T-t}},
\end{equation*}
solves \eqref{eq:nls}. This gives the profile equation \eqref{eq:complex-profile}, in which dispersion and the nonlinearity both enter at leading order. After fixing the constant phase, regularity at the origin leaves two real shooting parameters, $Q(0)=A>0$ and $\Omega$, with $Q'(0)=0$. At infinity, the linearized equation has two distinct asymptotic modes,
\begin{equation*}
	Q_{\mathrm{slow}}(r)\sim C_s r^{-\frac{2}{p-1}+2i\Omega},
	\qquad
	Q_{\mathrm{fast}}(r)\sim C_f e^{-ir^2/4}
	r^{-d+\frac{2}{p-1}-2i\Omega}.
\end{equation*}
Since the second mode has a factor with a quadratic phase, each derivative introduces a growing factor. A nonzero fast mode has derivatives outside of $L^{2}$ at every integer order $j>s_c$, whereas the slow mode has derivatives in $L^{2}$ at those orders. We therefore impose $C_f=0$ and $C_s\ne0$, as required by the stability argument.

To construct such a profile, we match a solution regular at the origin to a solution with a pure slow tail at infinity, following the shooting approach of~\cite{DahneFigueras2026}. Near the origin, a finite Taylor expansion and a contraction argument give an enclosure of the exact solution. At infinity, we construct a finite asymptotic expansion and control its remainder again using a contraction argument. To do this we obtain a right inverse built from the two linear modes at infinity. This gives an exact solution at infinity with the prescribed slow coefficient and no fast component, together with an expansion to every algebraic order. At an intermediate radius, we match these two solutions. These are three real conditions in the three parameters $(A,\Omega,K)$, where $K=|C_s|^{p-1}$. A zero of the matching map joins the solutions up to a constant phase and yields the global profile of Theorem~\ref{thm:pure-slow-existence}.

The need for computer assistance arises in connecting the two endpoint constructions. For the fixed pairs $(d,p)$ in Theorem~\ref{thm:blowup}, our argument has no small parameter which controls the intermediate nonlinear regime. The local expansions alone therefore do not establish that the matching conditions can be satisfied. We overcome this difficulty by deriving explicit bounds for the endpoint remainders, the propagation of errors on finite intervals, and the parameter derivatives of the shooting problem. Interval arithmetic then verifies a finite collection of strict inequalities near an approximate matching zero. Brouwer's fixed-point theorem subsequently gives an exact zero. All computational assertions are collected in Lemma~\ref{lem:pure-slow-computer-bounds} and the code can be found in the supplementary material and at~\cite{GITHUB}.

The resulting profile has infinite energy because of its slow tail, so the exact self-similar solution does not yet give Theorem~\ref{thm:blowup}. To localize it, we use a finite-codimensional stability argument and write the rescaled solution as $V=Q+v$. We work in the radial space
\begin{equation*}
    \mathcal X_s
    =\dot H^s_{\mathrm{rad}}(\mathbb R^d;\mathbb C)
    \cap\dot H^{k_\star}_{\mathrm{rad}}(\mathbb R^d;\mathbb C),
    \qquad s_c<s<d/2,
    \qquad k_\star\in\mathbb N \text{ sufficiently large}.
\end{equation*}
This space contains $Q$, embeds into $L^\infty$, and is a Banach algebra. Moreover, cutting off the slow tail produces an arbitrarily small perturbation in $\mathcal X_s$. Our spectral analysis follows the strategy of obtaining dissipativity modulo a compact perturbation; see \cite{MerleRaphaelRodnianskiSzeftel2022, MerleRaphaelRodnianskiSzeftel2022II, BuckmasterCaoLaboraGomezSerrano2023,chen2026vorticity}. To apply complex spectral theory, we use the matrix formulation and the associated $\mathcal J$-invariance argument as in \cite{Schlag2009StableManifolds,Li2023}. The decay of the profile and its derivatives gives quadratic spatial decay for the potential coefficients of the linearized operator. An energy estimate, together with compact-perturbation theory, shows that the essential growth bound of the linearized semigroup is negative. After removing the finitely many neutral and unstable generalized eigenspaces, the linearized semigroup decays exponentially on the remaining stable subspace; see Proposition~\ref{prop:minrev-splitting}.

The nonlinear remainder is at least quadratic in $v$ in the $\mathcal X_s$ norm. We establish nonlinear stability using the standard Lyapunov--Perron construction; see \cite[Section 9.2]{Teschl2012}. A contraction argument selects the neutral and unstable components of the initial perturbation as functions of its stable component, so that $v$ decays exponentially in self-similar time. This defines a finite-codimensional manifold of initial data for $V$.

To construct localized data on this manifold, we use a finite-dimensional localization argument related to the constructions in \cite{LiZhou2025,Li2023}. We first replace $Q$ by $\chi_RQ$, where $\chi_R$ is a smooth radial cutoff at a large radius $R$. We then add a small correction $g_R$ from a fixed finite-dimensional space of smooth compactly supported radial functions. A further contraction chooses $g_R$ so that $V(0)=\chi_RQ+g_R$ belongs to this manifold. This proves Proposition~\ref{prop:global-stability} and gives smooth compactly supported initial data for the original equation.

\subsection*{Acknowledgements}
KB, AC and JGS were partially supported by the NSF under grants DMS-2245017, DMS-2554957 and DMS-2434314. JS was partially supported by the Travel Support for Mathematicians program from the Simons Foundation.

\subsection*{AI Disclosure}
The authors have used the LLMs GPT-5.6 and GPT-6 during the development of this work. These models have been employed for mathematical exploration, executing proof strategies and assisting with calculations. The main high level ideas of this paper were generated by the authors. The implementation of the code in the supplementary material was also done with the assistance of the same models. All mathematical statements, proofs and code scripts have been checked by the authors, who take full responsibility. All proofs and code scripts influenced by LLMs have been substantially rewritten.

\section{Self-similar profile equation}
\label{sec:profile-equation}

We seek exact self-similar solutions of \eqref{eq:nls} in the complex variable. Denote the nonlinearity by
\begin{equation*}
	N(q)=|q|^{p-1}q=q^{\frac{p+1}{2}}\overline q^{\,\frac{p-1}{2}}.
\end{equation*}
For $T>0$, use the similarity coordinates
\begin{equation}\label{eq:coordinates}
	\tau=\log\frac{T}{T-t},\qquad y=\frac{x}{\sqrt{T-t}}.
\end{equation}
Thus $d\tau/dt=(T-t)^{-1}$ and $t\uparrow T$ corresponds to $\tau\to\infty$. For $\Omega\in\mathbb R$, set
\begin{equation}\label{eq:complex-ansatz}
	u(t,x)=(T-t)^{-\frac{1}{p-1}}e^{-i\Omega\tau}Q(y),
	\qquad y=\frac{x}{\sqrt{T-t}}.
\end{equation}
Since $\partial_t\tau=(T-t)^{-1}$, $\partial_ty=y/[2(T-t)]$, substitution into \eqref{eq:nls} gives the profile equation
\begin{equation}\label{eq:complex-profile}
	\Delta_yQ-|Q|^{p-1}Q+\Omega Q
	+i\left(\frac{1}{p-1}Q+\frac12 y\cdot\nabla_yQ\right)=0.
\end{equation}
If a nonzero bounded smooth profile $Q$ exists globally, \eqref{eq:complex-ansatz} gives an exact solution with
\begin{equation*}
	\|u(t)\|_{L^\infty}
	=(T-t)^{-\frac{1}{p-1}}\|Q\|_{L^\infty}.
\end{equation*}
However, these solutions may not have finite energy \eqref{eq:energy}. For the exact ansatz \eqref{eq:complex-ansatz}, scaling gives
\begin{equation}\label{eq:self-similar-energy}
	E(u(t))=(T-t)^{s_c-1}E(Q).
\end{equation}
If $E(Q)<\infty$ and the associated solution lies in a class where energy is conserved, then $s_c>1$ forces $E(Q)=0$ and thus $Q=0$. Thus the nontrivial profiles must be localized to prove Theorem~\ref{thm:blowup}, as in~\cite{MerleRaphaelRodnianskiSzeftel2022}.

We now restrict to radial self-similar profiles and study their asymptotic behavior at the origin and at infinity. For a radial profile, write $Q(y)=Q(r)$ with $r=|y|$. Equation \eqref{eq:complex-profile} becomes
\begin{equation}\label{eq:complex-profile-radial}
	Q''+\frac{d-1}rQ'-|Q|^{p-1}Q+\Omega Q
	+i\left(\frac{1}{p-1}Q+\frac r2Q'\right)=0,
	\qquad r>0.
\end{equation}
After fixing the constant phase, we prescribe
\begin{equation}\label{eq:complex-profile-origin}
	Q(0)=A>0,\qquad Q'(0)=0.
\end{equation}
Smooth radial profiles extend evenly across $r=0$. Write $Q(r)=A+Br^2+O(r^4)$, so that
\begin{equation*}
	Q''(r)+\frac{d-1}rQ'(r)=2dB+O(r^2),
	\qquad \frac r2Q'(r)=O(r^2).
\end{equation*}
The coefficients in \eqref{eq:complex-profile-radial} therefore give $2dB-A^p+\left(\Omega+\frac{i}{p-1}\right)A=0$, and hence
\begin{equation*}
	Q(r)=A+\frac{A}{2d}\left(A^{p-1}-\Omega-\frac{i}{p-1}\right)r^2
	+O(r^4).
\end{equation*}

At infinity, the two leading behaviors are determined by the linear equation
\begin{equation*}
	q''+\left(\frac{d-1}r+\frac{ir}{2}\right)q'
	+\left(\Omega+\frac{i}{p-1}\right)q=0.
\end{equation*}
We first study this asymptotic regime formally. Exact linear solutions with these asymptotics are constructed in Subsection~\ref{subsec:profile-linear-modes}. For an algebraic ansatz $q(r)=r^\alpha$, substitution gives
\begin{equation*}
	q''+\left(\frac{d-1}r+\frac{ir}{2}\right)q'
	+\left(\Omega+\frac{i}{p-1}\right)q
	=r^\alpha\left[
		\Omega+\frac{i}{p-1}+\frac{i\alpha}{2}
		+\frac{\alpha(\alpha+d-2)}{r^2}\right].
\end{equation*}
Thus the leading coefficient vanishes precisely when $\alpha=-\frac{2}{p-1}+2i\Omega$. This balance is consistent with the nonlinear equation. Thus $q(r)=C_s r^{-2/(p-1)+2i\Omega}$, and the nonlinearity becomes
\begin{equation*}
	|q|^{p-1}q=|C_s|^{p-1}r^{-2}q.
\end{equation*}
Both the radial Laplacian and the nonlinearity are therefore of order $r^{-2}$ relative to $q$.

The second behavior includes a quadratic phase oscillatory factor. Substituting $q(r)=e^{-ir^2/4}r^\beta$, for which
\begin{equation*}
	\frac{q'}q=-\frac{ir}{2}+\frac\beta r,
	\qquad
	\frac{q''}q=-\frac{r^2}{4}-i\left(\beta+\frac12\right)
	+\frac{\beta(\beta-1)}{r^2},
\end{equation*}
gives
\begin{equation*}
	q''+\left(\frac{d-1}r+\frac{ir}{2}\right)q'
	+\left(\Omega+\frac{i}{p-1}\right)q
	=q\left[\Omega+i\left(\frac{1}{p-1}-\frac d2-\frac\beta2\right)
		+\frac{\beta(\beta+d-2)}{r^2}\right].
\end{equation*}
The terms of order $r^2q$ cancel, and the remaining leading coefficient vanishes when $\beta=-d+\frac{2}{p-1}-2i\Omega$. For this ansatz, $|q|^{p-1}=r^{(-d+2/(p-1))(p-1)}$, so the nonlinearity is again lower order. The formal balances yield the following tail modes, which we refer to as slow and fast modes according to their leading terms,
\begin{equation*}
	\begin{aligned}
		Q_{\mathrm{slow}}(r) & \sim C_s r^{-\frac{2}{p-1}+2i\Omega},                      \\
		Q_{\mathrm{fast}}(r) & \sim C_f e^{-\frac{ir^2}{4}}r^{-d+\frac{2}{p-1}-2i\Omega}.
	\end{aligned}
\end{equation*}

The fast oscillation limits derivative integrability. For a nonzero fast mode and an integer $j\geq0$, formally differentiating its asymptotic expansion gives
\begin{equation*}
	\frac{d^j}{dr^j}Q_{\mathrm{fast}}(r)
	=C_f\left(-\frac i2\right)^j
	e^{-\frac{ir^2}{4}}r^{-d+\frac{2}{p-1}-2i\Omega+j}
	\left(1+O(r^{-2})\right).
\end{equation*}
Their squared integrability at infinity is equivalent to convergence of
\begin{equation*}
	\int_1^\infty r^{2(-d+\frac{2}{p-1}+j)}r^{d-1}\,dr
	=\int_1^\infty r^{2j-d-1+\frac{4}{p-1}}\,dr.
\end{equation*}
This integral converges exactly when $j<d/2-2/(p-1)=s_c$. Thus a nonzero fast mode has derivatives not in $L^{2}$ at every integer order $j>s_c$. Derivatives of the slow mode instead decay like $r^{-2/(p-1)-j}$ and are square-integrable exactly when $j>s_c$. Hence the slow mode cannot cancel the failure of integrability of the fast mode. However, the nonzero slow tail is not in $L^2$, nor are its first derivatives in $L^2$. Our approach is to remove the fast mode and then localize the slow mode to obtain initial data in every $H^m$.

This section focuses on the construction of a profile satisfying
\begin{equation*}
	C_f=0,\qquad C_s\ne0,
\end{equation*}
while the localization of the profile will be studied in Section~\ref{sec:self-similar-blowup}.

For $A>0$ and $\Omega\in\mathbb R$, let $Q_{A,\Omega}$ denote the unique maximal regular solution of \eqref{eq:complex-profile-radial}--\eqref{eq:complex-profile-origin}. Note that equation~\eqref{eq:complex-profile-radial} is singular at the origin. The following argument shows that this equation defines a unique smooth radial solution starting at the origin, and that this solution cannot vanish while it exists. This will be used later for the construction matching an interior solution to a slow tail at infinity.

\begin{lem}\label{lem:profile-local-regularity}
	For each $A>0$ and $\Omega\in\mathbb R$, the profile problem has a
	unique maximal smooth radial solution $Q_{A,\Omega}$ on $[0,R_*)$.
	It never vanishes, and if $R_*<\infty$, then
	$\sup_{r<R_*}|Q_{A,\Omega}(r)|=\infty$.
	The solution depends continuously on $(A,\Omega)$ in $C^1$ on
	compact intervals of existence.
\end{lem}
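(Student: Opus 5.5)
The plan is to handle the singular point $r=0$ with an integral formulation and a contraction, then continue with standard ODE theory on $r>0$, and finally obtain nonvanishing from an integrated identity for the phase current. The key algebraic observation is that the first-order part of \eqref{eq:complex-profile-radial} has the integrating factor $r^{d-1}e^{ir^2/4}$:
\begin{equation*}
	\left(r^{d-1}e^{ir^2/4}Q'\right)'
	=r^{d-1}e^{ir^2/4}\,G(Q),
	\qquad
	G(Q)=|Q|^{p-1}Q-\left(\Omega+\tfrac{i}{p-1}\right)Q .
\end{equation*}
Together with \eqref{eq:complex-profile-origin}, this turns the problem into the fixed point equation
\begin{equation*}
	Q(r)=A+\int_0^r s^{1-d}e^{-is^2/4}\int_0^s t^{d-1}e^{it^2/4}G(Q(t))\,dt\,ds .
\end{equation*}
For the exponents considered, $p$ is odd, so $N(q)=q^{(p+1)/2}\overline q^{(p-1)/2}$ is a polynomial in $(q,\overline q)$ and hence smooth and locally Lipschitz as a real map.

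The steps, in order, are as follows.

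First, take a ball of radius $A$ around the constant $A$ in $C([0,\delta])$. The inner integral is bounded by $C\delta^{d}$ times the sup of $G$, so the double integral is $O(\delta^2)$. The map is therefore a contraction for $\delta$ small, depending continuously on $(A,\Omega)$. This gives local existence and uniqueness among bounded solutions with $Q'(0)=0$, and $Q'(r)=O(r)$.

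Next, to get smoothness and evenness, substitute $Q(r)=P(r^2)$. The equation becomes a Briot--Bouquet type system in $\rho=r^2$ with regular singular point and indicial roots $0$ and $1-d/2<0$. Alternatively, differentiate the integral equation repeatedly: each Taylor coefficient in $r^2$ is determined recursively, as in the computation of $B$ above, and the remainder is controlled by the same contraction. Either way this shows $Q$ extends to a smooth even function, i.e.\ a smooth radial function on $\mathbb R^d$.

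For $r>0$ the equation is a regular second-order ODE with smooth coefficients, so the solution continues to a maximal interval $[0,R_*)$. If $R_*<\infty$ and $|Q|\le M$ on $[0,R_*)$, the integral formula bounds $Q'$ on $[0,R_*)$, so $(Q,Q')$ has a limit at $R_*$ and the solution extends, a contradiction. Continuous $C^1$ dependence on $(A,\Omega)$ on compact subintervals of $[0,R_*)$ follows by combining the parameter-continuous contraction on $[0,\delta]$ with standard continuous dependence for regular ODEs on $[\delta,R]$, applied with Gronwall.

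The main step is nonvanishing. Set $J(r)=r^{d-1}\operatorname{Im}(\overline Q Q')$. Then $J'=\operatorname{Im}\bigl(\overline Q\,(r^{d-1}Q')'\bigr)$, and using the equation the terms $|Q|^{p+1}$ and $\Omega|Q|^2$ are real and drop out, leaving
\begin{equation*}
	J'(r)=-r^{d-1}\left(\frac{|Q|^2}{p-1}+\frac r4\,(|Q|^2)'\right).
\end{equation*}
Suppose $Q(r_0)=0$ for some $r_0\in(0,R_*)$. Then $J(0)=J(r_0)=0$. Integrating over $[0,r_0]$ and integrating the second term by parts, the boundary terms vanish since $Q(r_0)=0$, and we get
\begin{equation*}
	0=\left(\frac1{p-1}-\frac d4\right)\int_0^{r_0}r^{d-1}|Q|^2\,dr
	=-\frac{s_c}{2}\int_0^{r_0}r^{d-1}|Q|^2\,dr .
\end{equation*}
This is impossible, since $s_c>0$ and $Q(0)=A>0$. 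Hence $Q_{A,\Omega}$ never vanishes. I expect this identity to be the only nonroutine point; the remaining steps are standard once the integrating factor is used.
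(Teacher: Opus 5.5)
Your proposal is correct and follows essentially the paper's route: the same integrating factor and contraction at the origin, the same continuation criterion via the integral formula for $Q'$, and nonvanishing from the same integrated current identity $r^{d-1}\bigl(2\operatorname{Im}(\overline QQ')+\frac r2|Q|^2\bigr)=s_c\int_0^r\sigma^{d-1}|Q|^2\,d\sigma$, which you evaluate only at a putative zero. The only difference is the smoothness step: instead of Briot--Bouquet or Taylor-remainder bookkeeping, the paper substitutes $\sigma=r\theta$ in the formula for $Q'$ and bootstraps the regularity of the even extension, which is cleaner.
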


\begin{proof}
	Multiplication of \eqref{eq:complex-profile-radial} by an integrating factor $r^{d-1}e^{ir^2/4}$ gives
	\begin{equation*}
		\bigl(r^{d-1}e^{\frac{ir^2}{4}}Q'(r)\bigr)'
		=r^{d-1}e^{\frac{ir^2}{4}}\left(N(Q(r))-\left(\Omega+\frac{i}{p-1}\right)Q(r)\right).
	\end{equation*}
	For a regular solution, the expression inside the derivative tends to zero at the origin. Integration therefore yields
	\begin{equation}\label{eq:profile-local-integral}
		Q'(r)=r^{1-d}e^{-\frac{ir^2}{4}}\int_0^r
		\sigma^{d-1}e^{\frac{i\sigma^2}{4}}\left(N(Q(\sigma))-\left(\Omega+\frac{i}{p-1}\right)Q(\sigma)\right)\,d\sigma.
	\end{equation}
	We construct a solution by integrating this identity once more and a fixed point argument. For $R>0$, consider the closed ball $\mathcal B_R=\{q\in C([0,R];\mathbb C):\|q-A\|_\infty\leq1\}$ and the map
	\begin{equation*}
		(\mathcal Tq)(r)=A+\int_0^r t^{1-d}e^{-\frac{it^2}{4}}
		\int_0^t \sigma^{d-1}e^{\frac{i\sigma^2}{4}}
		\left(N(q(\sigma))-\left(\Omega+\frac{i}{p-1}\right)q(\sigma)\right)\,d\sigma\,dt.
	\end{equation*}
	The integrand in the outer integral extends continuously to zero. Since $N$ is polynomial in the real and imaginary parts, there are constants $M,L>0$ such that, whenever $|z-A|,|w-A|\leq1$,
	\begin{equation*}
		\left|N(z)-\left(\Omega+\frac{i}{p-1}\right)z\right|\leq M,
		\qquad
		\left|N(z)-N(w)-\left(\Omega+\frac{i}{p-1}\right)(z-w)\right|\leq L|z-w|.
	\end{equation*}
	For $q\in\mathcal B_R$ and $0\leq r\leq R$, the exponential factors have modulus one, so
	\begin{equation*}
		\begin{aligned}
			|(\mathcal Tq)(r)-A|
			 & \leq\int_0^r t^{1-d}\int_0^t
			\sigma^{d-1}\left|N(q(\sigma))-\left(\Omega+\frac{i}{p-1}\right)q(\sigma)\right|\,d\sigma\,dt \\
			 & \leq M\int_0^r t^{1-d}\left(\int_0^t \sigma^{d-1}\,d\sigma\right)dt
			=\frac Md\int_0^r t\,dt
			=\frac{Mr^2}{2d}.
		\end{aligned}
	\end{equation*}
	Similarly, for $q,\widetilde q\in\mathcal B_R$,
	\begin{equation*}
		\begin{aligned}
			 & |(\mathcal Tq)(r)-(\mathcal T\widetilde q)(r)|                                     \\
			 & \qquad \leq\int_0^r t^{1-d}\int_0^t \sigma^{d-1}
			\bigl|N(q(\sigma))-N(\widetilde q(\sigma))
			-\left(\Omega+\frac{i}{p-1}\right)(q(\sigma)-\widetilde q(\sigma))\bigr|\,d\sigma\,dt \\
			 & \qquad \leq L\int_0^r t^{1-d}\int_0^t
			\sigma^{d-1}|q(\sigma)-\widetilde q(\sigma)|\,d\sigma\,dt
			=\frac{Lr^2}{2d}\|q-\widetilde q\|_\infty.
		\end{aligned}
	\end{equation*}
	Taking the supremum over $r\in[0,R]$ gives
	\begin{equation*}
		\|\mathcal Tq-A\|_\infty\leq\frac{MR^2}{2d},
		\qquad
		\|\mathcal Tq-\mathcal T\widetilde q\|_\infty
		\leq\frac{LR^2}{2d}\|q-\widetilde q\|_\infty.
	\end{equation*}
	Choose $R$ so that $MR^2/(2d)\leq1$ and $LR^2/(2d)<1$. The contraction mapping theorem gives a unique fixed point. It satisfies \eqref{eq:profile-local-integral}, with $|Q'(r)|\leq Mr/d$, and hence is $C^1$ up to zero with $Q(0)=A$ and $Q'(0)=0$. Differentiating the integral formula for $r>0$ shows that it solves the profile equation there. Every regular solution with these initial data lies in $\mathcal B_R$ on a sufficiently short interval and satisfies the same fixed-point equation. This proves local uniqueness.

	To verify smoothness at the origin, substitute $\sigma=r\theta$ in \eqref{eq:profile-local-integral}:
	\begin{equation*}
		Q'(r)=r\int_0^1\theta^{d-1}e^{\frac{ir^2(\theta^2-1)}{4}}
		\left(N(Q(r\theta))-\left(\Omega+\frac{i}{p-1}\right)Q(r\theta)\right)\,d\theta.
	\end{equation*}
	The even extension $Q(-r)=Q(r)$ is $C^1$, since $Q'(0)=0$, and satisfies this identity for negative $r$ as well. If this extension is $C^j$, differentiation under the integral on the fixed interval $[0,1]$ shows that the right-hand side is $C^j$. Thus $Q$ is $C^{j+1}$, and induction proves smoothness and evenness. A smooth even function of $r$ defines a smooth radial function of $x$.

	Away from zero the equation is a smooth first-order system for $(Q,Q')$, so local uniqueness gives a unique maximal continuation on $[0,R_*)$. If $R_*<\infty$ and $Q$ is bounded there, then $N(Q)-\left(\Omega+\frac{i}{p-1}\right)Q$ is bounded and \eqref{eq:profile-local-integral} bounds $Q'$ on $[0,R_*)$. The differential equation then bounds $Q''$ on $[R_*/2,R_*)$. Consequently $Q$ and $Q'$ have finite limits at $R_*$, and local existence for the first-order system extends the solution past $R_*$. This contradiction proves the continuation criterion. The contraction construction also gives continuous dependence on the parameters.

	Finally, set $\rho=|Q|^2$ and $J=2\operatorname{Im}(\overline Q Q')$. Taking twice the imaginary part of \eqref{eq:complex-profile-radial} multiplied by $\overline Q$ gives, for $r>0$,
	\begin{equation*}
		J'+\frac{d-1}rJ+\frac{2}{p-1}\rho+\frac r2\rho'=0.
	\end{equation*}
	Here the nonlinear term and $\Omega|Q|^2$ are real, and $\rho'=2\operatorname{Re}(\overline Q Q')$. It follows that
	\begin{equation*}
		\frac{d}{dr}\left[r^{d-1}\left(J+\frac r2\rho\right)\right]
		=s_c r^{d-1}\rho.
	\end{equation*}
	Since $Q$ is bounded and $Q'(r)=O(r)$ at zero, the expression in brackets tends to zero there. Integration yields
	\begin{equation}\label{eq:profile-current}
		r^{d-1}\left(2\operatorname{Im}(\overline Q Q')+\frac r2|Q|^2\right)
		=s_c\int_0^r \sigma^{d-1}|Q(\sigma)|^2\,d\sigma.
	\end{equation}
	Because $s_c>0$ and $Q(0)=A>0$, continuity makes the right-hand side strictly positive for every $r\in(0,R_*)$. If $Q(r)=0$, the left-hand side would vanish. Hence $Q$ has no zeros on $[0,R_*)$.
\end{proof}

\subsection{Linear exterior modes}
\label{subsec:profile-linear-modes}

The linear part of the radial profile equation \eqref{eq:complex-profile-radial} is
\begin{equation}\label{eq:profile-linear-operator}
	L_\Omega q=q''+\left(\frac{d-1}r+\frac{ir}{2}\right)q'
	+\left(\Omega+\frac{i}{p-1}\right)q,\qquad r>0.
\end{equation}
Set
\begin{equation}\label{eq:mu-and-kappa}
	\mu=\frac{1}{p-1}-i\Omega, \quad \kappa=\frac d2-\mu,
\end{equation}
so that $\Omega+\frac{i}{p-1}=i\mu$, $\operatorname{Re}\mu=\frac{1}{p-1}>0$, and $\operatorname{Re}\kappa=\frac d2-\frac{1}{p-1}>0$. The following lemma constructs a fundamental system of solutions of the linear equation $L_\Omega q=0$ on $(0,\infty)$, with the previously stated asymptotics at infinity.

\begin{lem}\label{lem:profile-linear-modes}
	The equation $L_\Omega q=0$ on $(0,\infty)$ has solutions given by
	\begin{equation}\label{eq:profile-general-modes}
		\begin{aligned}
			S_\Omega(r) & =\frac{4^{-\mu}}{\Gamma(\mu)}
			\int_0^\infty e^{-\frac{r^2t}{4}}t^{\mu-1}(1+it)^{\frac{d}{2}-1-\mu}\,dt, \\
			F_\Omega(r) & =\frac{4^{-\kappa}e^{-\frac{ir^2}{4}}}{\Gamma(\kappa)}
			\int_0^\infty e^{-\frac{r^2t}{4}}t^{\kappa-1}(1-it)^{\frac{d}{2}-1-\kappa}\,dt.
		\end{aligned}
	\end{equation}
	Both integrals converge absolutely for every $r>0$.
\end{lem}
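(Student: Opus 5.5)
Convergence comes first. For $r>0$ the factor $e^{-r^2t/4}$ decays exponentially, and $|(1\pm it)^{\beta}|=(1+t^2)^{\operatorname{Re}\beta/2}e^{\mp\operatorname{Im}\beta\arg(1\pm it)}$ grows at most polynomially, because the arguments lie in $(-\pi/2,\pi/2)$. Near $t=0$ the integrands behave like $t^{\mu-1}$ and $t^{\kappa-1}$. These are integrable because $\operatorname{Re}\mu=\frac1{p-1}>0$ and $\operatorname{Re}\kappa>0$, as noted after \eqref{eq:mu-and-kappa}. The same bounds hold locally uniformly in $r\in(0,\infty)$ after multiplying by polynomials in $t$. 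Hence one may differentiate under the integral sign twice.

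To verify $L_\Omega S_\Omega=0$, I change variables to $z=r^2/4$. Then $q'=\frac r2 q_z$ and $q''=\frac12 q_z+zq_{zz}$, so
\begin{equation*}
L_\Omega q = z q_{zz}+\left(\tfrac d2+iz\right)q_z+i\mu q.
\end{equation*}
This is a confluent (Kummer-type) equation. Write $S=\int_0^\infty e^{-zt}w(t)\,dt$ with $w(t)=t^{\mu-1}(1+it)^{d/2-1-\mu}$, dropping the constant. Differentiating under the integral turns $zq_{zz}+\left(\frac d2+iz\right)q_z+i\mu q$ into
\begin{equation*}
\int_0^\infty e^{-zt}\left[z(t^2-it)w-\tfrac d2 tw+i\mu w\right]dt.
\end{equation*}
I then integrate the $z e^{-zt}$ term by parts using $ze^{-zt}=-\partial_te^{-zt}$. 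This gives a boundary term $\left[e^{-zt}(t^2-it)w\right]_0^\infty$, which vanishes: at $t=0$ it behaves like $t^{\mu}$ with $\operatorname{Re}\mu>0$, and at infinity the exponential kills it. The remaining integrand is
\begin{equation*}
e^{-zt}\left[\big((t^2-it)w\big)'-\tfrac d2tw+i\mu w\right].
\end{equation*}
This vanishes identically precisely because $w'/w=\frac{\mu-1}{t}+\frac{i(d/2-1-\mu)}{1+it}$. Indeed, $(t^2-it)=-it(1+it)$, so
\begin{equation*}
\big((t^2-it)w\big)'/w=(2t-i)+(t^2-it)\frac{w'}{w}=2t-i-i(\mu-1)(1+it)+t(d/2-1-\mu),
\end{equation*}
and adding $-\frac d2 t+i\mu$ gives zero. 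I will carry out this short algebraic check explicitly.

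For $F_\Omega$ I set $q=e^{-iz}g$. A direct computation gives
\begin{equation*}
L_\Omega(e^{-iz}g)=e^{-iz}\left[zg_{zz}+\left(\tfrac d2-iz\right)g_z-i\left(\tfrac d2-\mu\right)g\right]=e^{-iz}\left[zg_{zz}+\left(\tfrac d2-iz\right)g_z-i\kappa g\right].
\end{equation*}
This is the same equation with $i$ replaced by $-i$ and $\mu$ replaced by $\kappa$. The identical Laplace-transform argument with $w(t)=t^{\kappa-1}(1-it)^{d/2-1-\kappa}$ then applies, using $\operatorname{Re}\kappa>0$ for the boundary term at $t=0$.

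The main obstacle is only bookkeeping. It consists of the conjugation identity for $F_\Omega$, namely the cancellation of the $z$ terms and the emergence of the constant $-i\kappa$, together with justifying the vanishing boundary terms and the differentiation under the integral sign. The normalizing constants play no role in the statement.
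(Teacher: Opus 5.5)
Your proof is correct and follows the paper's route: the substitution $x=r^2/4$, then a Laplace-integral ansatz whose boundary term vanishes because $\operatorname{Re}\mu,\operatorname{Re}\kappa>0$, then the conjugation $q=e^{-ix}g$, which reduces $F_\Omega$ to the same equation with $i\mapsto-i$ and $\mu\mapsto\kappa$. The only difference is that you check directly that the given kernel solves the first-order equation, whereas the paper derives the kernel by integrating $h'/h$; one harmless slip is that $|(1-it)^\beta|$ carries the factor $e^{-\operatorname{Im}\beta\,\arg(1-it)}$ rather than $e^{+\operatorname{Im}\beta\,\arg(1-it)}$, though either is bounded by $e^{\pi|\operatorname{Im}\beta|/2}$.
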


\begin{proof}
	The change of variables $x=r^2/4$, with $q(r)=g_\mu(x)$, transforms $L_\Omega q=0$ into
	\begin{equation*}
		xg_\mu''+(d/2+ix)g_\mu'+i\mu g_\mu=0.
	\end{equation*}
	To exploit the affine dependence of the coefficients on $x$, seek a Laplace integral representation
	\begin{equation*}
		g_\mu(x)=\int_0^\infty e^{-xt}h_\mu(t)\,dt.
	\end{equation*}
	Substituting this ansatz and integrating the terms containing $x$ by parts formally gives
	\begin{equation*}
		xg_\mu''+(d/2+ix)g_\mu'+i\mu g_\mu
		=\int_0^\infty e^{-xt}
		\bigl(t(t-i)h_\mu'+\bigl((2-d/2)t+i(\mu-1)\bigr)h_\mu\bigr)\,dt,
	\end{equation*}
	provided the boundary term $[e^{-xt}t(t-i)h_\mu(t)]_0^\infty$ vanishes. Thus it suffices to solve the first-order equation
	\begin{equation*}
		t(t-i)h_\mu'+\bigl((2-d/2)t+i(\mu-1)\bigr)h_\mu=0,
		\quad\text{equivalently}\quad
		\frac{h_\mu'}{h_\mu}=\frac{\mu-1}{t}
		+\frac{i(d/2-1-\mu)}{1+it}.
	\end{equation*}
	Integration yields, up to a multiplicative constant,
	\begin{equation*}
		h_\mu(t)=t^{\mu-1}(1+it)^{\frac{d}{2}-1-\mu}.
	\end{equation*}
	This function is $O(t^{1/(p-1)-1})$ at zero and $O(t^{d/2-2})$ at infinity. Consequently its Laplace integral converges absolutely for $x>0$, can be differentiated under the integral sign to every order, and has the required vanishing boundary term. The preceding calculation is therefore justified. Multiplication by $4^{-\mu}/\Gamma(\mu)$ gives the stated formula for $S_\Omega$.

	To obtain the second solution, extract the quadratic oscillatory factor by seeking $q(r)=e^{-ix}g_\mu(x)$, again with $x=r^2/4$. Direct substitution gives
	\begin{equation*}
		L_\Omega q=e^{-ix}
		\bigl(xg_\mu''+(d/2-ix)g_\mu'-i\kappa g_\mu\bigr),
	\end{equation*}
	since $\mu+\kappa=d/2$. The equation in parentheses is the preceding transformed equation with $i$ replaced by $-i$ and $\mu$ by $\kappa$. The same Laplace integral construction therefore gives the function $t^{\kappa-1}(1-it)^{d/2-1-\kappa}$. It is integrable at zero because $\operatorname{Re}\kappa>0$ and has polynomial growth at infinity, so the same convergence and integration-by-parts arguments apply. Multiplication by $4^{-\kappa}/\Gamma(\kappa)$ and restoration of the factor $e^{-ix}$ yield $F_\Omega$. These representations are consistent with the standard confluent hypergeometric integral formulas; see~\cite[Sections~13.2 and~13.4]{OldeDaalhuis2026}.

	Changing variables $t\mapsto4t/r^2$ gives
	\begin{equation*}
		\begin{aligned}
			S_\Omega(r) & =r^{-2\mu}\left(1+\frac{4i\mu(d/2-1-\mu)}{r^2}
			+O_{\Omega}(r^{-4})\right),                                  \\
			F_\Omega(r) & =e^{-\frac{ir^2}{4}}r^{-2\kappa}
			\left(1+O_{\Omega}(r^{-2})\right).
		\end{aligned}
	\end{equation*}
	The same rescaling applied to the differentiated integrals justifies differentiation of these asymptotic expansions. Abel's identity and the leading asymptotics then give
	\begin{equation}\label{eq:profile-Wronskian}
		W_\Omega=S_\Omega F_\Omega'-S_\Omega'F_\Omega
		=-\frac i2r^{1-d}e^{-\frac{ir^2}{4}}.
	\end{equation}
	Thus $W_\Omega(r)\ne0$ for every $r>0$, proving that $S_\Omega$ and $F_\Omega$ form a fundamental system.
\end{proof}

For a nonlinear profile with a nonzero slow tail, $C_s$ denotes the leading slow coefficient. We define the fast coefficient $C_f$ by requiring that there exist constants $c_1,c_2\in\mathbb{C}$ with
\begin{equation}\label{eq:fast-coefficient-definition}
	Q(r)=C_sr^\beta\bigl(1+c_1r^{-2}+c_2r^{-4}\bigr)
	+C_fF_\Omega(r)+o(r^{-d+\frac{2}{p-1}}),
	\qquad \beta=-\frac{2}{p-1}+2i\Omega.
\end{equation}
Note that here $F_\Omega$ carries the factor $e^{-ir^2/4}$. Comparing the distinct decay rates in \eqref{eq:fast-coefficient-definition} determines $C_s$ and $C_f$ uniquely. For our parameter pairs, $4/(p-1)<d<6+4/(p-1)$, so the fast mode decays faster than the leading slow mode, while $r^{\operatorname{Re}\beta-6}=o(r^{-d+2/(p-1)})$.

The exterior construction below proves this expansion with $C_f=0$. Considering subsequent nonlinear slow terms is necessary in this definition since, in the case of $(d,p)=(5,7)$, the first term of $F_\Omega$ is of order $O(r^{-14/3})$, while the first two nonlinear slow terms are of orders $O(r^{\beta-2})=O(r^{-7/3})$ and $O(r^{\beta-4})=O(r^{-13/3})$, respectively.

\section{Construction of the self-similar profile}

To construct a profile with $C_f=0$ and $C_s\ne0$, we first construct the slow exterior solution and obtain rigorous bounds on its value and derivative at a finite radius. We then match it to the regular-origin solution by Brouwer's fixed-point theorem. All statements established by computation are collected in Lemma~\ref{lem:pure-slow-computer-bounds}. The code is available in the supplementary material and at~\cite{GITHUB}. The estimates that justify the computation are proved below.

The computer-assisted shooting and matching strategy used in this subsection is inspired by the work in~\cite{DahneFigueras2026}. As in their construction, we build solutions satisfying the prescribed conditions at the origin and at infinity and certify that they match at an intermediate radius. Related computer-assisted constructions have appeared in many works, for example,~\cite{vanDenBerg2011,LessardMirelesJamesReinhardt2014}.

Use the parameters
\begin{equation*}
	\vartheta=(A,\Omega,K),\qquad A>0,\quad K>0,
\end{equation*}
where $A$ is the value of the profile at the origin, $\Omega$ is the self-similar frequency, and $K = |C_s|^{p-1}$ relates to the leading coefficient of the slow tail at infinity. We introduce the variable $K$ because it naturally enters the nonlinear expansion at infinity. Let $\vartheta_0=(A_0,\Omega_0,K_0)$ be the parameters supplied by the computer-assisted code. These parameters were found using a shooting and continuation method to discover approximate solutions of the profile equation. They can be found in the accompanying code. Their approximate values are
\begin{equation*}
	\begin{array}{c|rrr}
		(d,p)  & A_0                & \Omega_0            & K_0               \\ \hline
		(3,27) & 0.0501485544258024 & -1.3966245573341954 & 70.8930132091120  \\
		(4,9)  & 0.0014126481622802 & -2.9442997039706108 & 230.5787044287510 \\
		(5,7)  & 0.0409508188010580 & -1.7226834479870780 & 120.4293553317432
	\end{array}
\end{equation*}
Define the parameter cube
\begin{equation}\label{eq:pure-slow-parameter-box}
	D=\{\vartheta\in\mathbb R^3:
	\|\vartheta-\vartheta_0\|_\infty\leq\rho\},\qquad
	\rho=\begin{cases}
		2^{-240}, & (d,p)=(3,27), \\
		2^{-320}, & (d,p)=(4,9),  \\
		2^{-220}, & (d,p)=(5,7).
	\end{cases}
\end{equation}
This small parameter cube is needed in the computer-assisted proof to control the propagation of the error bound. Our shooting method is extremely sensitive to the parameters, and the smallness of $\rho$, together with the use of multiple-precision arithmetic, is necessary to obtain a rigorous enclosure of the solution. The next theorem shows that the parameters $(A_0,\Omega_0,K_0)$ can be perturbed inside the parameter cube $D$ to yield a global regular profile with a slow tail at infinity.

\begin{thm}
	\label{thm:pure-slow-existence}
	There is a $\vartheta_*=(A_*,\Omega_*,K_*)\in D$ such that
	the regular profile $Q_{A_*,\Omega_*}$ is global and has
	\begin{equation*}
		C_f=0,\qquad |C_s|=K_*^{\frac{1}{p-1}}>0.
	\end{equation*}
	In particular, the profile is smooth, regular at the origin, and nonvanishing at every finite radius. At infinity it has a slow expansion to every algebraic order, as specified in Corollary~\ref{cor:pure-slow-full-expansion}, up to a constant unit phase.
\end{thm}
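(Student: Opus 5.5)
The plan is a shooting and matching argument closed by Brouwer's theorem, with Lemma~\ref{lem:profile-local-regularity} supplying the interior solution and a contraction around the slow expansion supplying the exterior one. Fix an intermediate radius $r_0$ and a larger radius $r_1$, both chosen in the code.

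\emph{Exterior solution.} For each $\vartheta\in D$ I construct an exact solution of \eqref{eq:complex-profile-radial} on $[r_1,\infty)$ of the form
\begin{equation*}
	Q^{\mathrm{ext}}_\vartheta(r)=K^{\frac{1}{p-1}}r^\beta\Bigl(1+\sum_{k=1}^{M}c_k(\vartheta)r^{-2k}\Bigr)+w(r),
\end{equation*}
with $w$ containing no fast component. The coefficients $c_k$ come from the formal recursion. At each order the nonlinearity contributes $K r^{-2}$ times lower-order terms, so every $c_k$ is an explicit polynomial in $K$, $\Omega$ and $1/(p-1)$. The remainder $w$ solves $L_\Omega w=\mathcal N(w)+E_M$, where the error $E_M$ is $O(r^{\operatorname{Re}\beta-2M-2})$. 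I invert $L_\Omega$ with the right inverse
\begin{equation*}
	(\mathcal R f)(r)=\int_r^\infty \frac{S_\Omega(r)F_\Omega(s)-F_\Omega(r)S_\Omega(s)}{W_\Omega(s)}f(s)\,ds,
\end{equation*}
using the fundamental system of Lemma~\ref{lem:profile-linear-modes} and the Wronskian \eqref{eq:profile-Wronskian}. Integrating from infinity guarantees $C_f=0$. Because $1/W_\Omega$ carries $e^{ir^2/4}$, the $F_\Omega(r)S_\Omega(s)/W_\Omega$ term gains decay only after integrating by parts against the oscillation. I would therefore estimate $\mathcal R$ on a weighted space $\sup_{r\ge r_1}r^{\gamma}(|w|+r^{-1}|w'|)$, with explicit kernel bounds coming from the integral representations of $S_\Omega$ and $F_\Omega$. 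A contraction for $r_1$ large then gives $w$ with explicit bounds. Differentiating the fixed-point equation in $\vartheta$ gives the same kind of bounds for $\partial_\vartheta(Q^{\mathrm{ext}},Q^{\mathrm{ext}\prime})(r_1)$. I then propagate the solution inward from $r_1$ to $r_0$ with a validated ODE integrator.

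\emph{Interior solution and matching.} On $[0,r_0]$ I start from the Taylor expansion at the origin, which is controlled by the contraction in the proof of Lemma~\ref{lem:profile-local-regularity} with explicit constants. I then integrate rigorously to $r_0$, using Taylor models in $\vartheta$ to get enclosures of $(Q_{A,\Omega},Q_{A,\Omega}')(r_0)$ together with its $\vartheta$-Jacobian over $D$.

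The two solutions can agree only up to a constant phase. Since neither vanishes (Lemma~\ref{lem:profile-local-regularity}), the matching conditions are phase invariant:
\begin{equation*}
	G(\vartheta)=\Bigl(|Q^{\mathrm{int}}|-|Q^{\mathrm{ext}}|,\ \frac{Q^{\mathrm{int}\prime}}{Q^{\mathrm{int}}}-\frac{Q^{\mathrm{ext}\prime}}{Q^{\mathrm{ext}}}\Bigr)(r_0)\in\mathbb R\times\mathbb C\cong\mathbb R^3.
\end{equation*}
A zero of $G$ means $Q^{\mathrm{int}}=e^{i\phi}Q^{\mathrm{ext}}$ at $r_0$ with equal logarithmic derivatives. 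Uniqueness for the ODE away from $0$ then glues the two into a global regular solution. That solution is $Q_{A_*,\Omega_*}$ up to the phase, with $C_f=0$ and $|C_s|=K_*^{1/(p-1)}$.

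To produce the zero, let $J_0$ be an approximate inverse of $DG(\vartheta_0)$ and apply Brouwer to $\Phi(\vartheta)=\vartheta-J_0G(\vartheta)$ on $D$. It suffices to certify
\begin{equation*}
	\|J_0G(\vartheta_0)\|_\infty+\sup_{D}\|I-J_0DG\|_\infty\,\rho\le\rho,
\end{equation*}
which reduces to finitely many interval inequalities. These are collected in Lemma~\ref{lem:pure-slow-computer-bounds}. Once the zero is found, smoothness and nonvanishing follow from Lemma~\ref{lem:profile-local-regularity}, and the full algebraic expansion follows by taking $M$ arbitrary in the exterior construction (Corollary~\ref{cor:pure-slow-full-expansion}).

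The main obstacle is the exterior step: obtaining explicit, sharp enough bounds on the oscillatory right inverse and on the $\vartheta$-derivatives of the tail so that $r_1$ stays moderate. A second difficulty is the extreme sensitivity of the inward and outward propagation, which is why $\rho$ has to be tiny and multiple precision is required. My first attempt at the kernel bound would be one integration by parts in $s$ against $e^{is^2/4}$, with explicit constants.
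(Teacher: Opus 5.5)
Your outline is correct and follows the paper's architecture. Both arguments use a slow-tail ansatz corrected by a contraction with the variation-of-parameters inverse integrated from infinity, a Taylor-polynomial contraction at the origin, validated propagation to a matching radius, phase-invariant matching data, and Brouwer applied to $\vartheta\mapsto\vartheta-J_0G(\vartheta)$.

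The genuine difference is how you obtain the Jacobian. You differentiate the exact exterior fixed point in $\vartheta$ and bound $I-J_0DG$ for the exact map. The paper never differentiates exact solutions. It introduces the finite-data map $\widetilde{\mathcal M}$, whose solutions start from the explicit Cauchy data of $P^{(I)}_{\mathrm{in}}$ at $r_{\mathrm{in}}$ and of $P^{(J)}_{\mathrm{out}}$ at $r_{\mathrm{out}}$. It computes $Z$ from the variational equations of that ODE alone. The exact-versus-finite discrepancy goes into a $C^0$ term $E$, obtained by propagating the contraction radii with Gronwall (Proposition~\ref{prop:pure-slow-validated-zero}). Brouwer then needs only continuity of $\mathcal M$, which Corollary~\ref{cor:pure-slow-exterior-family} gets from Arzel\`a--Ascoli and tail uniqueness.

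Your route is viable but costs more. Since $\partial_\Omega r^\beta=2i\log r\,r^\beta$, the $\Omega$-derivative of the residual is only $O(r^{-\gamma}\log r)$. So the fixed-point map is not $C^1$ into the same weighted space. You would need a log-weighted norm, or to conjugate out $r^\beta$, together with explicit bounds on $\partial_\Omega S_\Omega$, $\partial_\Omega F_\Omega$ and their Gamma normalizations. What this buys you is a $C^1$ exact matching map. Then $Z<1$ makes $\vartheta-J_0G(\vartheta)$ a contraction on $D$ and gives local uniqueness of the zero, which the paper's Brouwer argument does not provide.

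There are two smaller points. First, the obstacle you single out is not one: no integration by parts against $e^{is^2/4}$ is needed. Bounding absolutely, $|S_\Omega(\sigma)/W_\Omega(\sigma)|\lesssim\sigma^{d-1-\frac{2}{p-1}}$. So for $\gamma>d-\frac{2}{p-1}$ the tail integral is $O(r^{d-\frac{2}{p-1}-\gamma})$. Multiplying by $|F_\Omega(r)|\lesssim r^{-d+\frac{2}{p-1}}$ gives exactly $r^{-\gamma}$, and using $|F_\Omega'(r)|\lesssim r^{1-d+\frac{2}{p-1}}$ gives $r^{1-\gamma}$ for the derivative. Your $\gamma=\frac{2}{p-1}+2M+2$ exceeds $d-\frac{2}{p-1}$ for $M\ge2$ in all three cases, and this is precisely Lemma~\ref{lem:pure-slow-inverse}.

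Second, Lemma~\ref{lem:profile-local-regularity} gives nonvanishing only for the regular interior solution. That $Q^{\mathrm{ext}}(r_0)\neq0$ must come from the exterior contraction on $[r_1,\infty)$ and the validated enclosure on $[r_0,r_1]$, as in the step test \eqref{eq:pure-slow-step-nonvanishing}.
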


To construct this profile, we first build a finite slow-tail expansion and bound its residual. Next, using the two linear modes at infinity, we construct a right inverse whose corrections have zero limiting modal coefficients, and use it to obtain an exact exterior solution with $C_s=K^{1/(p-1)}$ and $C_f=0$. The exterior construction is then shown to have a full slow expansion and to depend continuously on the parameters.

We then construct a finite Taylor approximation at the origin and use a contraction argument for the integral equation to enclose the exact regular solution starting from the origin and its initial data. We then define the exact and finite three-component matching maps; a zero of the exact map joins the inner and outer solutions up to a constant unit phase. Finally, flow estimates propagate the finite inner and outer enclosures, the computer-assisted verification checks the resulting boundary, propagation, nonvanishing, and matching bounds, and the Brouwer fixed point theorem yields the required matching zero.

\subsection{Exterior solution at infinity}
We now start by constructing a finite approximation to the slow exterior solution for the whole nonlinear equation. The leading slow asymptotic mode of the linear exterior equation $L_\Omega q=0$ is $K^{1/(p-1)}r^\beta$, where $\beta=-2/(p-1)+2i\Omega$. We correct this mode by a finite expansion in inverse even powers of $r$, chosen so that the first $J$ orders of the nonlinear profile equation cancel. For $J\geq1$, define
\begin{equation}\label{eq:pure-slow-logarithmic-approximation}
	P_{\mathrm{out}}^{(J)}(r)=K^{\frac{1}{p-1}}r^\beta e^{\Phi_J(r^{-2})},\quad
	\Phi_J(t)=\sum_{n=1}^J\ell_nt^n.
\end{equation}

To derive a recursive relation for the coefficients, write $t=r^{-2}$ and temporarily regard $\Phi(t)=\sum_{n\geq1}\ell_nt^n$ as a formal power series. The logarithmic derivative of the corresponding formal ansatz with respect to $\log r$ is
\begin{equation*}
	\beta-2t\Phi'(t)=:F(t)=\sum_{n\geq0}f_nt^n,
\end{equation*}
and hence
\begin{equation*}
	f_0=\beta,\qquad f_n=-2n\ell_n,\qquad n\geq1.
\end{equation*}
We now define the two remaining series that enter after substitution:
\begin{align}
	e^{(p-1)\operatorname{Re}\Phi(t)} & =\sum_{n\geq0}E_nt^n,
	\label{eq:pure-slow-E-generating-function}                \\
	F(t)^2+(d-2)F(t)-2tF'(t)          & =\sum_{n\geq0}H_nt^n.
	\label{eq:pure-slow-H-generating-function}
\end{align}
Differentiating \eqref{eq:pure-slow-E-generating-function} and comparing coefficients gives
\begin{equation}\label{eq:pure-slow-E-recursion}
	E_0=1,\qquad
	E_n=\frac{p-1}n\sum_{j=1}^n j\operatorname{Re}\ell_j\,E_{n-j},
	\qquad n\geq1.
\end{equation}
Expanding \eqref{eq:pure-slow-H-generating-function} gives directly
\begin{equation*}
	H_n=\sum_{j=0}^n f_jf_{n-j}+(d-2-2n)f_n,
	\qquad n\geq0,
\end{equation*}
Finally, after division by the corresponding formal ansatz, equation \eqref{eq:complex-profile-radial} formally reads
\begin{equation*}
	t\left(\sum_{n\geq0}(H_n-KE_n)t^n-i\Phi'(t)\right) = 0.
\end{equation*}
Thus the coefficient of $t^{n+1}$ vanishes precisely when
\begin{equation}\label{eq:pure-slow-ell-recursion}
	\ell_{n+1}=\frac{i}{n+1}(KE_n-H_n),
	\qquad n\geq0.
\end{equation}
In particular, the case $n=0$ yields $\ell_1=i\{K-\beta(\beta+d-2)\}$. Thereafter, the coefficients are determined successively: once $\ell_1,\ldots,\ell_n$ are known, so are $E_n$ and $H_n$, and the last formula determines $\ell_{n+1}$. The approximation \eqref{eq:pure-slow-logarithmic-approximation} retains the first $J$ coefficients of this formal expansion. The next part of the construction concerns the analysis of the residual of this finite approximation.

We next estimate the residual of $P_{\mathrm{out}}^{(J)}$ on $[r_{\mathrm{out}},\infty)$. In the definitions above, we now take $\Phi=\Phi_J$ and hence regard $F$ and $H$ as polynomials. We set the rescaled variables
\begin{equation*}
	u=(r_{\mathrm{out}}/r)^2,\qquad L_n=\ell_nr_{\mathrm{out}}^{-2n}\quad (1\leq n\leq J),
\end{equation*}
and introduce the rescaled polynomials
\begin{equation*}
	\Phi_{r_{\mathrm{out}}}(u):=\Phi_J(r_{\mathrm{out}}^{-2}u)=\sum_{n=1}^J L_nu^n,
	\qquad G(u):=H(r_{\mathrm{out}}^{-2}u)=\sum_{n=0}^{2J}G_nu^n.
\end{equation*}
Thus the new variable satisfies $0\leq u\leq1$ and $\Phi_J(r^{-2})=\Phi_{r_{\mathrm{out}}}(u)$. For $\nu\in\mathbb R$, let $X_{\nu,r_{\mathrm{out}}}$ be the space of continuous complex functions on $[r_{\mathrm{out}},\infty)$ with weighted norm
\begin{equation*}
	\|f\|_{\nu,r_{\mathrm{out}}}:=\sup_{r\geq r_{\mathrm{out}}}r^\nu|f(r)|.
\end{equation*}
The preceding coefficient recursions yield the following estimates. We need to have explicit constants in the bounds to use them in the computer-assisted proof.

\begin{lem}
	\label{lem:pure-slow-finite-residual}
	Recall that $N(q)=|q|^{p-1}q$. For any $\chi>1$, define
	\begin{equation}\label{eq:pure-slow-residual-majorant}
		\mathcal B
		=\sum_{n=J}^{2J}|G_n|
		+K\chi^{-J}
		\exp\left((p-1)\sum_{n=1}^J
		|\operatorname{Re}L_n|\chi^n\right),  \qquad
		M=K^{\frac{1}{p-1}}\exp\left(\sum_{n=1}^J|\operatorname{Re}L_n|\right),
	\end{equation}
	Then the residual of $P_{\mathrm{out}}^{(J)}$ is
	\begin{equation}\label{eq:pure-slow-residual-identity}
		L_\Omega P_{\mathrm{out}}^{(J)}-N(P_{\mathrm{out}}^{(J)})
		=P_{\mathrm{out}}^{(J)}(r)\left\{
		\frac{u}{r_{\mathrm{out}}^2}
		\left(G(u)-Ke^{(p-1)\operatorname{Re}\Phi_{r_{\mathrm{out}}}(u)}\right)
		-iu\Phi_{r_{\mathrm{out}}}'(u)\right\}.
	\end{equation}
	Moreover, for $0\leq u\leq1$, we have the estimates
	\begin{equation}\label{eq:pure-slow-residual-bound}
		\begin{aligned}
			\left|\frac{u}{r_{\mathrm{out}}^2}
			\left(G(u)-Ke^{(p-1)\operatorname{Re}\Phi_{r_{\mathrm{out}}}(u)}\right)
			-iu\Phi_{r_{\mathrm{out}}}'(u)\right|
			 & \leq r_{\mathrm{out}}^{-2}\mathcal B\,u^{J+1}, \\
			\|L_\Omega P_{\mathrm{out}}^{(J)}-N(P_{\mathrm{out}}^{(J)})\|_{\nu,r_{\mathrm{out}}}
			 & \leq Mr_{\mathrm{out}}^{2J}\mathcal B,
			\qquad \nu=\frac{2}{p-1}+2J+2.
		\end{aligned}
	\end{equation}
\end{lem}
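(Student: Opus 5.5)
The plan is a direct computation for the identity \eqref{eq:pure-slow-residual-identity}, followed by a coefficient-by-coefficient comparison with the recursion \eqref{eq:pure-slow-ell-recursion} and a majorant argument for the exponential tail.

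\emph{The identity.} Write $P=P_{\mathrm{out}}^{(J)}$ and $t=r^{-2}$. Since $r\frac{d}{dr}=-2t\frac{d}{dt}$, the logarithmic derivative is $rP'/P=\beta-2t\Phi_J'(t)=F(t)$. Differentiating once more gives
\begin{equation*}
	r^2P''/P=F^2-F-2tF'(t),
\end{equation*}
and hence
\begin{equation*}
	r^2\Bigl(P''+\tfrac{d-1}{r}P'\Bigr)/P=F^2+(d-2)F-2tF'=H(t).
\end{equation*}
The first-order terms combine as
\begin{equation*}
	\tfrac{ir}{2}P'/P+\Omega+\tfrac{i}{p-1}=\tfrac{i}{2}F+\Omega+\tfrac{i}{p-1}.
\end{equation*}
Because $\tfrac{i}{2}\beta=-\tfrac{i}{p-1}-\Omega$, this equals $-it\Phi_J'(t)$. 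For the nonlinearity, $|P|^{p-1}=Kr^{-2}e^{(p-1)\operatorname{Re}\Phi_J(t)}$, so $N(P)/P=tKe^{(p-1)\operatorname{Re}\Phi_J}$. Adding these contributions, the residual divided by $P$ is
\begin{equation*}
	t\bigl(H(t)-Ke^{(p-1)\operatorname{Re}\Phi_J(t)}\bigr)-it\Phi_J'(t).
\end{equation*}
Now substitute $t=u/r_{\mathrm{out}}^2$. Using $\Phi_J(t)=\Phi_{r_{\mathrm{out}}}(u)$, $t\Phi_J'(t)=u\Phi_{r_{\mathrm{out}}}'(u)$ and $H(t)=G(u)$, this is exactly \eqref{eq:pure-slow-residual-identity}.

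\emph{Cancellation of low orders.} Expand the bracket in powers of $u$. The coefficients $H_n$ for $n\le J$ depend only on $f_0,\dots,f_n$, and $E_n$ for $n\le J$ depends only on $\ell_1,\dots,\ell_n$. So for $n\le J-1$, the coefficients of $G$ and of the Taylor series of $e^{(p-1)\operatorname{Re}\Phi_{r_{\mathrm{out}}}}$ coincide with the rescaled formal quantities $H_nr_{\mathrm{out}}^{-2n}$ and $E_nr_{\mathrm{out}}^{-2n}$. The term $-iu\Phi_{r_{\mathrm{out}}}'(u)=-i\sum_{n=1}^J nL_nu^n$ supplies exactly $-i(n+1)\ell_{n+1}t^{n+1}$ for $n+1\le J$. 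Hence \eqref{eq:pure-slow-ell-recursion} kills every coefficient of $u^{n+1}$ with $0\le n\le J-1$. What remains is
\begin{equation*}
	r_{\mathrm{out}}^{-2}u\Bigl(\sum_{n=J}^{2J}G_nu^n-K\,T_J(u)\Bigr),
\end{equation*}
where $T_J$ is the Taylor tail, from order $u^J$ on, of $e^{(p-1)\operatorname{Re}\Phi_{r_{\mathrm{out}}}(u)}$. I expect this index bookkeeping to be the main point requiring care: one has to check that truncating $\Phi$ at order $J$ does not alter $H_n$ or $E_n$ for $n<J$, and that the scalings $L_n=\ell_nr_{\mathrm{out}}^{-2n}$ match on both sides.

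\emph{Bounds.} For $0\le u\le1$, the polynomial part is at most $u^J\sum_{n=J}^{2J}|G_n|$. For the tail, note that for real $u$ the function $e^{(p-1)\sum_n\operatorname{Re}L_nu^n}$ is the restriction of an entire function. Its Taylor coefficients are dominated in modulus by the nonnegative coefficients $b_n$ of $m(z)=\exp\bigl((p-1)\sum_n|\operatorname{Re}L_n|z^n\bigr)$. Therefore
\begin{equation*}
	|T_J(u)|\le u^J\sum_{n\ge J}b_n\le u^J\chi^{-J}\sum_n b_n\chi^n=u^J\chi^{-J}m(\chi),
\end{equation*}
and combining the two parts yields the first inequality in \eqref{eq:pure-slow-residual-bound}. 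For the weighted bound, use $|P|\le K^{1/(p-1)}r^{-2/(p-1)}e^{\sum|\operatorname{Re}L_n|}=Mr^{-2/(p-1)}$ together with $u^{J+1}=r_{\mathrm{out}}^{2J+2}r^{-2J-2}$. Multiplying gives $|{\rm residual}|\le Mr_{\mathrm{out}}^{2J}\mathcal B\,r^{-\nu}$ with $\nu=\frac{2}{p-1}+2J+2$, which is the second inequality.
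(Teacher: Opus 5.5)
Your proposal is correct and follows the paper's proof essentially step for step. Both derive the identity by direct substitution, which you carry out explicitly via $rP'/P=F$ and $r^2(P''+\tfrac{d-1}{r}P')/P=H$. Both then kill every coefficient through degree $J$ with the recursion, dominate the Taylor coefficients of $e^{(p-1)\operatorname{Re}\Phi_{r_{\mathrm{out}}}}$ by those of $\exp\bigl((p-1)\sum|\operatorname{Re}L_n|u^n\bigr)$ using the $\chi^{-J}$ trick, and obtain the weighted estimate from the amplitude bound $|P_{\mathrm{out}}^{(J)}|\le Mr^{-2/(p-1)}$.
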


\begin{proof}
	Direct substitution of $P_{\mathrm{out}}^{(J)}$ into the profile equation, using the definitions of $F$ and $G$, gives \eqref{eq:pure-slow-residual-identity}. The coefficient recursions from above make every coefficient through degree $J$ of the expression in braces vanish.

	Write
	\begin{equation*}
		e^{(p-1)\operatorname{Re}\Phi_{r_{\mathrm{out}}}(u)}=\sum_{m=0}^{\infty}e_mu^m,
		\qquad
		\exp\left((p-1)\sum_{n=1}^J
		|\operatorname{Re}L_n|u^n\right)=\sum_{m=0}^{\infty}b_mu^m.
	\end{equation*}
	The Cauchy product formula gives $|e_m|\leq b_m$ for every $m\geq0$, and $b_m\geq0$. After factoring out $u/r_{\mathrm{out}}^2$, the term involving $\Phi_{r_{\mathrm{out}}}'$ has degree at most $J-1$. Hence the cancellations through degree $J$ in the expression in braces are equivalent to cancellation of the coefficients of degrees $0,\ldots,J-1$ in the bracket, and thus
	\begin{equation*}
		G(u)-Ke^{(p-1)\operatorname{Re}\Phi_{r_{\mathrm{out}}}(u)}-ir_{\mathrm{out}}^2\Phi_{r_{\mathrm{out}}}'(u)
		=\sum_{m=J}^{2J}G_mu^m-K\sum_{m=J}^{\infty}e_mu^m.
	\end{equation*}
	For $0\leq u\leq1$ and $\chi>1$, it follows that
	\begin{align*}
		\left|G(u)-Ke^{(p-1)\operatorname{Re}\Phi_{r_{\mathrm{out}}}(u)}
		-ir_{\mathrm{out}}^2\Phi_{r_{\mathrm{out}}}'(u)\right|
		 & \leq u^J\left(\sum_{m=J}^{2J}|G_m|
		+K\sum_{m=J}^{\infty}|e_m|\right)              \\
		 & \leq u^J\left(\sum_{m=J}^{2J}|G_m|
		+K\chi^{-J}\sum_{m=J}^{\infty}b_m\chi^m\right) \\
		 & \leq \mathcal B u^J,
	\end{align*}
	because $\sum_{m=0}^{\infty}b_m\chi^m =\exp((p-1)\sum_{n=1}^J|\operatorname{Re}L_n|\chi^n)$. Multiplication by $u/r_{\mathrm{out}}^2$ proves the first bound in \eqref{eq:pure-slow-residual-bound}.

	Finally, for $0\leq u\leq1$, $|\operatorname{Re}\Phi_{r_{\mathrm{out}}}(u)|\leq \sum_{n=1}^J|\operatorname{Re}L_n|$. Hence $|P_{\mathrm{out}}^{(J)}(r)|\leq Mr^{-2/(p-1)}$ for $r\geq r_{\mathrm{out}}$. Combining this amplitude bound with $u=(r_{\mathrm{out}}/r)^2$ gives the second estimate in \eqref{eq:pure-slow-residual-bound}.
\end{proof}

To correct $P_{\mathrm{out}}^{(J)}$ to an exact exterior solution, write $Q^{\mathrm{out}}=P_{\mathrm{out}}^{(J)}+h$. Then
\begin{equation*}
	L_\Omega h=N(P_{\mathrm{out}}^{(J)}+h)-N(P_{\mathrm{out}}^{(J)})-\mathcal R_{\mathrm{out}},
\end{equation*}
where the exterior residual is $\mathcal R_{\mathrm{out}}(r) :=(L_\Omega P_{\mathrm{out}}^{(J)})(r) -N(P_{\mathrm{out}}^{(J)}(r))$. Here $L_\Omega$ is the linear exterior operator from \eqref{eq:profile-linear-operator}. We therefore need an inverse for $L_\Omega$ whose output has zero limiting slow and fast modal coefficients, so that the prescribed values of $C_s$ and $C_f$ are unchanged.

Recall the weighted space $X_{\nu,r_{\mathrm{out}}}$ defined above. For $f\in X_{\nu,r_{\mathrm{out}}}$, use the variation of parameters formula to define
\begin{equation*}
	(\mathcal L_{\Omega,\infty}^{-1}f)(r)
	:={}
	S_\Omega(r)
	\int_r^\infty
	\frac{F_\Omega(\sigma)f(\sigma)}{W_\Omega(\sigma)}\,d\sigma\\
	-
	F_\Omega(r)
	\int_r^\infty
	\frac{S_\Omega(\sigma)f(\sigma)}{W_\Omega(\sigma)}\,d\sigma.
\end{equation*}
The following lemma shows that this gives a right inverse to $L_\Omega$ on the weighted space $X_{\nu,r_{\mathrm{out}}}$ and provides explicit bounds for the inverse. These bounds must be compatible with the computer-assisted verification, so the constants are tracked explicitly.

\begin{lem}\label{lem:pure-slow-inverse}
	Suppose $r_{\mathrm{out}}\geq32$,
	$\nu>d-\frac{2}{p-1}$, and $|\Omega|\leq\Omega_{\mathrm{bd}}$, where
	\begin{equation*}
		\Omega_{\mathrm{bd}}
		:=
		\begin{cases}
			7/5,   & (d,p)=(3,27), \\
			59/20, & (d,p)=(4,9),  \\
			7/4,   & (d,p)=(5,7).
		\end{cases}
	\end{equation*}
	Then, for every $f\in X_{\nu,r_{\mathrm{out}}}$, the function $\mathcal L_{\Omega,\infty}^{-1}f$ belongs to $C^2([r_{\mathrm{out}},\infty))$ and satisfies the equality $L_\Omega\mathcal L_{\Omega,\infty}^{-1}f=f$. Moreover, there is an explicit constant $\mathfrak C_{\nu,r_{\mathrm{out}}}$, independent of $\Omega$ and $f$, such that
	\begin{equation}\label{eq:pure-slow-inverse-bounds}
		\|\mathcal L_{\Omega,\infty}^{-1}f
		\|_{\nu,r_{\mathrm{out}}}
		\leq
		\mathfrak C_{\nu,r_{\mathrm{out}}}
		\|f\|_{\nu,r_{\mathrm{out}}},\qquad
		\|(\mathcal L_{\Omega,\infty}^{-1}f)'
		\|_{\nu-1,r_{\mathrm{out}}}
		\leq
		\mathfrak C_{\nu,r_{\mathrm{out}}}
		\|f\|_{\nu,r_{\mathrm{out}}}.
	\end{equation}
\end{lem}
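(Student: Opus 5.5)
The plan is to insert the Wronskian identity \eqref{eq:profile-Wronskian} into the variation of parameters formula and then estimate the two integrals separately, using explicit pointwise bounds on $S_\Omega$, $F_\Omega$ and their derivatives that hold uniformly in $r\geq32$ and $|\Omega|\leq\Omega_{\mathrm{bd}}$. Since $1/W_\Omega(\sigma)=2i\,\sigma^{d-1}e^{i\sigma^2/4}$, the two kernels become
\begin{equation*}
	\frac{F_\Omega(\sigma)}{W_\Omega(\sigma)}=2i\,\sigma^{d-1}\widetilde F_\Omega(\sigma),\qquad
	\frac{S_\Omega(\sigma)}{W_\Omega(\sigma)}=2i\,\sigma^{d-1}e^{\frac{i\sigma^2}{4}}S_\Omega(\sigma),
\end{equation*}
where $\widetilde F_\Omega=e^{ir^2/4}F_\Omega$ is the non-oscillatory Laplace integral. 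The first step is to derive explicit bounds from the representations in Lemma~\ref{lem:profile-linear-modes}. After the substitution $t\mapsto4t/r^2$ one has $|S_\Omega(r)|\leq C_S r^{-2/(p-1)}$ and $|\widetilde F_\Omega(r)|\leq C_F r^{-d+2/(p-1)}$ for $r\geq32$, and similarly $|S_\Omega'|\leq C_S' r^{-2/(p-1)-1}$ and $|F_\Omega'|\leq C_F' r^{-d+2/(p-1)+1}$. The growth in $F_\Omega'$ comes from differentiating the phase. The constants are obtained by bounding $|(1\pm 4it/r^2)^{a}|$ with $\operatorname{Re}a$ and $|\operatorname{Im}a|\leq|\Omega|$ controlled, together with $|\Gamma(\mu)|^{-1}$ and $|\Gamma(\kappa)|^{-1}$ on a compact set of $\mu,\kappa$ determined by $\Omega_{\mathrm{bd}}$. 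Taking these values at $\Omega_{\mathrm{bd}}$ gives independence of $\Omega$.

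The second step handles the fast-kernel integral $\int_r^\infty\sigma^{d-1}\widetilde F_\Omega f\,d\sigma$. Its integrand is bounded by $C_F\sigma^{2/(p-1)-1-\nu}\|f\|_\nu$. This is integrable because $\nu>2/(p-1)$, which follows from $\nu>d-2/(p-1)$ and $d>4/(p-1)$. Multiplying by $|S_\Omega(r)|$ gives $r^{-\nu}$, as required.

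The third step is the slow-kernel integral $\int_r^\infty\sigma^{d-1}e^{i\sigma^2/4}S_\Omega f\,d\sigma$. Its absolute integrand is $\lesssim\sigma^{d-1-2/(p-1)-\nu}$, which is integrable exactly because $\nu>d-2/(p-1)$, and the integral is $\lesssim r^{d-2/(p-1)-\nu}$. Multiplying by $|F_\Omega(r)|\lesssim r^{-d+2/(p-1)}$ again gives $r^{-\nu}$, with no integration by parts needed. This explains the hypothesis on $\nu$. Both integrals converge absolutely, so $\mathcal L^{-1}_{\Omega,\infty}f$ is continuous. Differentiating gives $(\mathcal L^{-1}f)'=S_\Omega'\int F f/W-F_\Omega'\int S f/W$, because the boundary terms $\mp S F f/W$ cancel. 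A second differentiation shows that $L_\Omega\mathcal L^{-1}f=f$, by the usual Wronskian computation, and $C^2$ follows since $f$ is continuous.

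For the derivative bound, the $S_\Omega'$ term gains a factor $r^{-1}$, which is harmless. The $F_\Omega'$ term loses a factor $r$, because $|F_\Omega'|\sim r\cdot r^{-d+2/(p-1)}$, and this gives only $r^{1-\nu}$. This matches the weight $\nu-1$ in \eqref{eq:pure-slow-inverse-bounds}. The constant $\mathfrak C$ is then taken as the maximum of the explicit products, for instance
\begin{equation*}
	2C_SC_F/(\nu-\tfrac{2}{p-1})+2C_FC_S/(\nu-d+\tfrac{2}{p-1}),
\end{equation*}
together with the analogous expression using primed constants.

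The main obstacle is the first step: producing rigorous explicit constants for the special functions that are uniform in $\Omega$. The $(1\pm it)^{d/2-1-\mu}$ factors have complex exponents, which give factors of the form $e^{|\operatorname{Im}|\arg}\leq e^{\pi|\Omega|/2}$. These must be bounded carefully, and the threshold $r_{\mathrm{out}}\geq32$ is used so that the series and remainder bounds from the rescaled integrals are clean. I would first write $|(1+4it/r^2)^a|\leq(1+16t^2/r^4)^{\operatorname{Re}a/2}e^{|\operatorname{Im}a|\pi/2}$ and then bound the resulting Gamma-type integrals explicitly.
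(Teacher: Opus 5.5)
Your proposal is correct and has the same structure as the paper's proof. Both use weighted bounds on $S_\Omega$, $F_\Omega$ and their derivatives, with the extra factor $r$ in $F_\Omega'$ coming from the phase. Both obtain absolute convergence of the two tails from $|W_\Omega|=r^{1-d}/2$ and $\nu>d-\frac{2}{p-1}>\frac{2}{p-1}$, use the cancellation of boundary terms in $u'$, and arrive at a constant of the form $2A^2\bigl((\nu-\frac{2}{p-1})^{-1}+(\nu+\frac{2}{p-1}-d)^{-1}\bigr)$.

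The difference is in the mode estimate. You bound $|(1\pm iv)^a|\le|1\pm iv|^{\operatorname{Re}a}e^{\pi|\operatorname{Im}a|/2}$ and then bound the whole Laplace integral in absolute value. This works for any $r_{\mathrm{out}}>0$, so $r_{\mathrm{out}}\ge32$ plays no role in your argument, despite your remark that it does. The price is constants such as $e^{\pi\Omega_{\mathrm{bd}}/2}\Gamma(\frac{1}{p-1})/|\Gamma(\frac{1}{p-1}+i\Omega_{\mathrm{bd}})|$. These exceed the true limit $\lim_{r\to\infty}r^{2/(p-1)}|S_\Omega(r)|=1$ by a factor of order $e^{\pi\Omega_{\mathrm{bd}}}$, roughly $5\cdot10^4$ for $(d,p)=(4,9)$, and they do not improve as $r_{\mathrm{out}}\to\infty$.

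The paper instead subtracts the exact identity $\Gamma(z)^{-1}\int_0^\infty e^{-t}t^{z-1}\,dt=1$ and estimates only the increment $|(1\pm iv)^\alpha-1|\le|\alpha|ve^{c(x)v}$, using $|\arg(1\pm iv)|\le v$ and $\log|1\pm iv|\le v$. The resulting factor $e^{4c(x)t/r^2}$ is exactly why $r\ge32$ is imposed, to keep $\delta_x>0$. In exchange the paper gets $|\widehat S_\Omega-1|\le D_Sr^{-2}$ and $|\widehat F_\Omega-1|\le D_Fr^{-2}$, so $A_{r_{\mathrm{out}}}$ approaches its sharp asymptotic value.

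For the lemma as stated, either route suffices. The paper's sharper constants are what let the interval checks in \eqref{eq:pure-slow-exterior-tests} close at a moderate $r_{\mathrm{out}}$; your cruder $\mathfrak C_{\nu,r_{\mathrm{out}}}$ would push the required $r_{\mathrm{out}}$ up by orders of magnitude.

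One small point: to make your constant explicit and uniform by ``taking values at $\Omega_{\mathrm{bd}}$'', you need that $1/|\Gamma(x+i\omega)|$ is increasing in $|\omega|$ and computably bounded there. The Euler product with a tail estimate, as in the paper, gives both.
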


\begin{proof}
	Recall $\mu$ and $\kappa$ defined by \eqref{eq:mu-and-kappa}. We first normalize $S_\Omega$ and $F_\Omega$ using these parameters and derive estimates on the rescaled functions. Let
	\begin{equation*}
		\widehat S_\Omega(r):=r^{2\mu}S_\Omega(r),\qquad
		\widehat F_\Omega(r):=e^{\frac{ir^2}{4}}r^{2\kappa}F_\Omega(r).
	\end{equation*}
	Rescaling \eqref{eq:profile-general-modes}, we obtain $\widehat S_\Omega=H_\mu^+$ and $\widehat F_\Omega=H_\kappa^-$, where
	\begin{equation*}
		H_z^\pm(r):=\frac{1}{\Gamma(z)}\int_0^\infty
		e^{-t}t^{z-1}\left(1\pm\frac{4it}{r^2}\right)^{\frac d2-1-z}\,dt.
	\end{equation*}
	We estimate both integrals at once. Write $z=x+i\omega$, with $x>0$ and $|\omega|\leq\Omega_{\mathrm{bd}}$, and set
	\begin{equation*}
		\alpha:=\frac d2-1-z,\qquad
		c(x):=\Omega_{\mathrm{bd}}+\max\left\{0,\frac d2-2-x\right\}.
	\end{equation*}
	Since $|\arg(1\pm iv)|\leq v$ and $\log|1\pm iv|\leq v$ for $v\geq0$, the principal powers satisfy
	\begin{equation*}
		|(1\pm iv)^{\alpha-1}|
		=\exp\left[\left(\frac d2-2-x\right)\log|1\pm iv|
			+\omega\arg(1\pm iv)\right]
		\leq e^{c(x)v}.
	\end{equation*}
	Integrating the derivative in $v$ therefore gives
	\begin{equation}\label{eq:principal-power-increment-bound}
		|(1\pm iv)^\alpha-1|
		\leq |\alpha|\int_0^v e^{c(x)w}\,dw
		\leq |\alpha|v e^{c(x)v}.
	\end{equation}
	For the stated parameter pairs, $c(x)\leq59/20+1/2<32^2/4$. Thus $\delta_x:=1-4c(x)/32^2>0$. Subtracting the Gamma integral $\Gamma(z)^{-1}\int_0^\infty e^{-t}t^{z-1}\,dt=1$, we infer, for $r\geq32$,
	\begin{equation*}
		\begin{aligned}
			|H_z^\pm(r)-1|
			 & \leq\frac{4|\alpha|}{r^2|\Gamma(z)|}
			\int_0^\infty e^{-(1-4c(x)/r^2)t}t^x\,dt        \\
			 & =\frac{4|\alpha|\Gamma(x+1)}{r^2|\Gamma(z)|}
			\left(1-\frac{4c(x)}{r^2}\right)^{-x-1}
			\leq\frac{4|\alpha|\Gamma(x+1)}{r^2|\Gamma(z)|}
			\delta_x^{-x-1}.
		\end{aligned}
	\end{equation*}
	To estimate the derivative, we compute
	\begin{equation*}
		r\frac{d}{dr}\left(1\pm\frac{4it}{r^2}\right)^\alpha
		=\mp\frac{8i\alpha t}{r^2}
		\left(1\pm\frac{4it}{r^2}\right)^{\alpha-1}.
	\end{equation*}
	The derivative integrand is bounded by a constant times $t^x e^{-\delta_x t}$, uniformly for $r\geq32$; the original integrand is bounded by a constant times $e^{-t}t^{x-1}+t^x e^{-\delta_x t}$, by \eqref{eq:principal-power-increment-bound}. These integrable bounds justify differentiation under the integral and yield
	\begin{equation*}
		|r(H_z^\pm)'(r)|
		\leq\frac{8|\alpha|}{r^2|\Gamma(z)|}
		\int_0^\infty t^x e^{-\delta_x t}\,dt\\
		=\frac{8|\alpha|\Gamma(x+1)}{r^2|\Gamma(z)|}
		\delta_x^{-x-1}.
	\end{equation*}

	It remains to bound the Gamma quotient uniformly in $\omega$. Euler's product formula gives
	\begin{equation*}
		\frac{\Gamma(x)}{|\Gamma(x+i\omega)|}
		=\prod_{n=0}^\infty
		\left(1+\frac{\omega^2}{(x+n)^2}\right)^{1/2}
		\leq R(x),
	\end{equation*}
	where
	\begin{equation*}
		R(x):=\prod_{n=0}^{15}
		\left(1+\frac{\Omega_{\mathrm{bd}}^2}{(x+n)^2}\right)^{1/2}
		\exp\left[\frac{\Omega_{\mathrm{bd}}^2}{2}
			\left((x+16)^{-2}+(x+16)^{-1}\right)\right].
	\end{equation*}
	Indeed, $\log(1+t)\leq t$ bounds the logarithm of the product tail, and the integral comparison gives
	\begin{equation*}
		\sum_{n=16}^\infty(x+n)^{-2}
		\leq(x+16)^{-2}+\int_{16}^\infty(x+t)^{-2}\,dt
		=(x+16)^{-2}+(x+16)^{-1}.
	\end{equation*}
	Define the explicit constants
	\begin{equation*}
		\begin{aligned}
			D(x) & :=4x\sqrt{\left(\frac d2-1-x\right)^2
			+\Omega_{\mathrm{bd}}^2}\,R(x)\delta_x^{-x-1}, \\
			D_S  & :=D\left(\frac{1}{p-1}\right),\qquad
			D_F:=D\left(\frac d2-\frac{1}{p-1}\right).
		\end{aligned}
	\end{equation*}
	Using $\Gamma(x+1)=x\Gamma(x)$ and applying the preceding bounds with $z=\mu,\kappa$, we obtain
	\begin{equation}\label{eq:pure-slow-linear-mode-bounds}
		\begin{aligned}
			|\widehat S_\Omega(r)-1| & \leq D_Sr^{-2},
			                         & |r\widehat S_\Omega'(r)| & \leq2D_Sr^{-2}, \\
			|\widehat F_\Omega(r)-1| & \leq D_Fr^{-2},
			                         & |r\widehat F_\Omega'(r)| & \leq2D_Fr^{-2}.
		\end{aligned}
	\end{equation}

	Returning to the original modes, we compute
	\begin{equation*}
		S_\Omega'(r)=r^{-2\mu-1}
		\left(-2\mu\widehat S_\Omega+r\widehat S_\Omega'\right), \qquad
		F_\Omega'(r)=e^{-\frac{ir^2}{4}}r^{-2\kappa}
		\left[\widehat F_\Omega'
			-\left(\frac{ir}{2}+\frac{2\kappa}{r}\right)\widehat F_\Omega\right].
	\end{equation*}
	Hence \eqref{eq:pure-slow-linear-mode-bounds}, together with $|\mu|\leq\sqrt{\frac{1}{(p-1)^2}+\Omega_{\mathrm{bd}}^2}$ and $|\kappa|\leq\sqrt{\left(\frac d2-\frac{1}{p-1}\right)^2+\Omega_{\mathrm{bd}}^2}$, yields
	\begin{equation}\label{eq:pure-slow-mode-bounds}
		\begin{aligned}
			r^{\frac{2}{p-1}}|S_\Omega(r)|   & \leq A_{r_{\mathrm{out}}},
			                                 & r^{\frac{2}{p-1}+1}|S_\Omega'(r)|   & \leq A_{r_{\mathrm{out}}}, \\
			r^{d-\frac{2}{p-1}}|F_\Omega(r)| & \leq A_{r_{\mathrm{out}}},
			                                 & r^{d-\frac{2}{p-1}-1}|F_\Omega'(r)| & \leq A_{r_{\mathrm{out}}},
		\end{aligned}
	\end{equation}
	for $r\geq r_{\mathrm{out}}$, where we choose
	\begin{equation*}
		\begin{aligned}
			A_{r_{\mathrm{out}}}:=\max\Biggl\{ &
			1+\frac{D_S}{r_{\mathrm{out}}^2},\quad
			1+\frac{D_F}{r_{\mathrm{out}}^2},                                                     \\
			                                   & 2\sqrt{\frac{1}{(p-1)^2}+\Omega_{\mathrm{bd}}^2}
			\left(1+\frac{D_S}{r_{\mathrm{out}}^2}\right)
			+\frac{2D_S}{r_{\mathrm{out}}^2},                                                     \\
			                                   & \left(\frac12+
			\frac{2\sqrt{\left(\frac d2-\frac{1}{p-1}\right)^2+\Omega_{\mathrm{bd}}^2}}{r_{\mathrm{out}}^2}\right)
			\left(1+\frac{D_F}{r_{\mathrm{out}}^2}\right)
			+\frac{2D_F}{r_{\mathrm{out}}^4}\Biggr\}.
		\end{aligned}
	\end{equation*}
	Here we used $r^{-j}\leq r_{\mathrm{out}}^{-j}$ for $j=2,4$ to make the bounds uniform on the exterior interval.

	Now we will show that the inverse is well-defined and use the bounds on the unscaled modes to derive bounds on this inverse. Let $f\in X_{\nu,r_{\mathrm{out}}}$ and write
	\begin{equation*}
		I_F(r):=\int_r^\infty\frac{F_\Omega(\sigma)f(\sigma)}{W_\Omega(\sigma)}\,d\sigma,
		\qquad
		I_S(r):=\int_r^\infty\frac{S_\Omega(\sigma)f(\sigma)}{W_\Omega(\sigma)}\,d\sigma.
	\end{equation*}
	For each of the three parameter pairs, $d-\frac{2}{p-1}>\frac{2}{p-1}$, so the assumption on $\nu$ implies both $\nu>d-\frac{2}{p-1}$ and $\nu>\frac{2}{p-1}$. Using $|W_\Omega(r)|=r^{1-d}/2$ from \eqref{eq:profile-Wronskian} and \eqref{eq:pure-slow-mode-bounds}, we obtain the absolutely convergent tail estimates
	\begin{equation}\label{eq:pure-slow-tail-bounds}
		\begin{aligned}
			|I_F(r)|
			 & \leq2A_{r_{\mathrm{out}}}\|f\|_{\nu,r_{\mathrm{out}}}
			\int_r^\infty \sigma^{\frac{2}{p-1}-\nu-1}\,d\sigma
			=\frac{2A_{r_{\mathrm{out}}}}{\nu-\frac{2}{p-1}}
			\|f\|_{\nu,r_{\mathrm{out}}}r^{\frac{2}{p-1}-\nu},       \\
			|I_S(r)|
			 & \leq2A_{r_{\mathrm{out}}}\|f\|_{\nu,r_{\mathrm{out}}}
			\int_r^\infty \sigma^{d-\frac{2}{p-1}-\nu-1}\,d\sigma
			=\frac{2A_{r_{\mathrm{out}}}}{\nu+\frac{2}{p-1}-d}
			\|f\|_{\nu,r_{\mathrm{out}}}r^{d-\frac{2}{p-1}-\nu}.
		\end{aligned}
	\end{equation}
	Thus $u:=\mathcal L_{\Omega,\infty}^{-1}f=S_\Omega I_F-F_\Omega I_S$ is well-defined. Since $f$ is continuous, variation of parameters gives $u\in C^2([r_{\mathrm{out}},\infty))$ and $L_\Omega u=f$. In particular, the endpoint terms cancel in the first derivative:
	\begin{equation*}
		u'=S_\Omega'I_F-F_\Omega'I_S.
	\end{equation*}
	Combining \eqref{eq:pure-slow-mode-bounds} and \eqref{eq:pure-slow-tail-bounds} with these formulas yields
	\begin{equation*}
		\begin{aligned}
			r^\nu|u(r)|
			 & \leq2A_{r_{\mathrm{out}}}^2
			\left(\frac{1}{\nu-\frac{2}{p-1}}+\frac{1}{\nu+\frac{2}{p-1}-d}\right)
			\|f\|_{\nu,r_{\mathrm{out}}},  \\
			r^{\nu-1}|u'(r)|
			 & \leq2A_{r_{\mathrm{out}}}^2
			\left(\frac{r^{-2}}{\nu-\frac{2}{p-1}}+\frac{1}{\nu+\frac{2}{p-1}-d}\right)
			\|f\|_{\nu,r_{\mathrm{out}}}.
		\end{aligned}
	\end{equation*}
	Since $r\geq32$, taking suprema proves \eqref{eq:pure-slow-inverse-bounds} with
	\begin{equation*}
		\mathfrak C_{\nu,r_{\mathrm{out}}}
		:=2A_{r_{\mathrm{out}}}^2
		\left(\frac{1}{\nu-\frac{2}{p-1}}
		+\frac{1}{\nu+\frac{2}{p-1}-d}\right).
	\end{equation*}
	All constants depend only on $d$, $p$, $\Omega_{\mathrm{bd}}$, $r_{\mathrm{out}}$, and $\nu$, and are therefore independent of $\Omega$ and $f$.
\end{proof}

We now use this right inverse to correct the finite approximation in the weighted exterior space $X_{\nu,r_{\mathrm{out}}}$. More precisely, we seek an additive correction $h$ such that $Q^{\mathrm{out}}=P_{\mathrm{out}}^{(J)}+h$ solves the full nonlinear profile equation. We solve for this correction using a fixed point argument. Its weighted decay will preserve $C_s=K^{1/(p-1)}$ and $C_f=0$.

\begin{prop}\label{prop:pure-slow-exterior-certificate}
	Suppose $r_{\mathrm{out}}\geq32$, $K>0$, $J\geq2$, and $\Omega$ satisfies the
	frequency bound in Lemma~\ref{lem:pure-slow-inverse}. Set
	$\nu=2/(p-1)+2J+2>d-2/(p-1)$.
	If there is a $\delta_{\mathrm{ext}}>0$ such that
	\begin{equation}\label{eq:pure-slow-exterior-tests}
		\begin{aligned}
			 & p\mathfrak C_{\nu,r_{\mathrm{out}}}r_{\mathrm{out}}^{-2}(M+\delta_{\mathrm{ext}})^{p-1}<1, \\
			 & \frac{\mathfrak C_{\nu,r_{\mathrm{out}}}M\mathcal B}{r_{\mathrm{out}}^2}
			\leq\left[1-p\mathfrak C_{\nu,r_{\mathrm{out}}}r_{\mathrm{out}}^{-2}
			(M+\delta_{\mathrm{ext}})^{p-1}\right]\delta_{\mathrm{ext}},                                  \\
			 & \delta_{\mathrm{ext}}<\frac{K^{\frac{2}{p-1}}}{M},
		\end{aligned}
	\end{equation}
	then there is a smooth nonvanishing exterior solution $Q^{\mathrm{out}}$ with $C_s=K^{1/(p-1)}$ and $C_f=0$. For $r\geq r_{\mathrm{out}}$ it satisfies
	\begin{equation}\label{eq:pure-slow-exterior-enclosure}
		|Q^{\mathrm{out}}-P_{\mathrm{out}}^{(J)}|
		\leq \delta_{\mathrm{ext}}r_{\mathrm{out}}^{2J+2}r^{-\frac{2}{p-1}-2J-2},\qquad
		|(Q^{\mathrm{out}})'-(P_{\mathrm{out}}^{(J)})'|
		\leq \delta_{\mathrm{ext}}r_{\mathrm{out}}^{2J+2}r^{-\frac{2}{p-1}-2J-1}.
	\end{equation}
\end{prop}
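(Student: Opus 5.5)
We set up a contraction for the correction $h$ in the weighted space $X_{\nu,r_{\mathrm{out}}}$, $\nu=2/(p-1)+2J+2$, using the right inverse of Lemma~\ref{lem:pure-slow-inverse}. Since $Q^{\mathrm{out}}=P_{\mathrm{out}}^{(J)}+h$ solves the profile equation iff $L_\Omega h=N(P_{\mathrm{out}}^{(J)}+h)-N(P_{\mathrm{out}}^{(J)})-\mathcal R_{\mathrm{out}}$, we define
\begin{equation*}
	\mathcal T h=\mathcal L_{\Omega,\infty}^{-1}\Bigl[N(P_{\mathrm{out}}^{(J)}+h)-N(P_{\mathrm{out}}^{(J)})-\mathcal R_{\mathrm{out}}\Bigr]
\end{equation*}
on the closed ball $\mathcal B_\delta=\{h:\|h\|_{\nu,r_{\mathrm{out}}}\leq\delta_{\mathrm{ext}}r_{\mathrm{out}}^{\nu-2/(p-1)}\}$. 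Note that the normalization is chosen so that $\|h\|_{\nu}\le \delta_{\mathrm{ext}} r_{\mathrm{out}}^{2J+2}$ is exactly the first bound in \eqref{eq:pure-slow-exterior-enclosure}, and that $|h(r)|\le\delta_{\mathrm{ext}}r^{-2/(p-1)}(r_{\mathrm{out}}/r)^{2J+2}\le \delta_{\mathrm{ext}}r^{-2/(p-1)}$.

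The key pointwise estimate is on the nonlinear difference. Using $|P_{\mathrm{out}}^{(J)}|\leq Mr^{-2/(p-1)}$ (from the proof of Lemma~\ref{lem:pure-slow-finite-residual}) and the mean value inequality $|N(a)-N(b)|\leq p\max(|a|,|b|)^{p-1}|a-b|$, for $h,\tilde h\in\mathcal B_\delta$ we get
\begin{equation*}
	|N(P+h)-N(P+\tilde h)|\leq p(M+\delta_{\mathrm{ext}})^{p-1}r^{-2}|h-\tilde h|,
\end{equation*}
so the weight gains $r^{-2}\le r_{\mathrm{out}}^{-2}$. Combined with \eqref{eq:pure-slow-inverse-bounds}, this gives the Lipschitz constant $p\mathfrak C_{\nu,r_{\mathrm{out}}}r_{\mathrm{out}}^{-2}(M+\delta_{\mathrm{ext}})^{p-1}<1$, which is the first test. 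The residual term contributes at most $\mathfrak C_{\nu,r_{\mathrm{out}}}Mr_{\mathrm{out}}^{2J}\mathcal B$ by \eqref{eq:pure-slow-residual-bound}. Writing $\|\mathcal Th\|\leq\|\mathcal T0\|+\mathrm{Lip}\cdot\|h\|$ and dividing by $r_{\mathrm{out}}^{2J+2}$, the self-map condition becomes precisely the second test. Banach's fixed point theorem then gives $h$. The derivative bound follows from the second estimate in \eqref{eq:pure-slow-inverse-bounds} applied to the same right-hand side, whose norm is again at most $\delta_{\mathrm{ext}}r_{\mathrm{out}}^{2J+2}$.

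Next we derive the structural conclusions.
\begin{itemize}
	\item \emph{Regularity.} $h$ is $C^2$ and solves the ODE, which is smooth away from $r=0$, so $Q^{\mathrm{out}}$ is smooth.
	\item \emph{Nonvanishing.} $|P_{\mathrm{out}}^{(J)}|\geq K^{1/(p-1)}e^{-\sum|\operatorname{Re}L_n|}r^{-2/(p-1)}=K^{2/(p-1)}M^{-1}r^{-2/(p-1)}$, while $|h|\leq\delta_{\mathrm{ext}}r^{-2/(p-1)}$. The third test therefore gives $Q^{\mathrm{out}}\neq0$.
	\item \emph{Modal coefficients.} $P_{\mathrm{out}}^{(J)}=K^{1/(p-1)}r^\beta(1+\ell_1r^{-2}+c_2r^{-4}+O(r^{-6}))$ is a pure slow expansion with $C_s=K^{1/(p-1)}$. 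Since $J\ge2$, we have $\nu\ge 2/(p-1)+6>d-2/(p-1)$, so $h=o(r^{-d+2/(p-1)})$ and $r^{\operatorname{Re}\beta-6}=o(r^{-d+2/(p-1)})$, as noted after \eqref{eq:fast-coefficient-definition}. Hence \eqref{eq:fast-coefficient-definition} holds with $C_f=0$, and by uniqueness of that decomposition the coefficients are as claimed.
\end{itemize}

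The main obstacle is only bookkeeping: matching the normalization of the ball to the exact constants in \eqref{eq:pure-slow-exterior-tests}, and checking that $|P+h|\le(M+\delta_{\mathrm{ext}})r^{-2/(p-1)}$ on the segment used in the mean value bound.
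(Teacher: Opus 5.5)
Your proposal is correct and follows the paper's proof essentially step by step. You use the same contraction for $h$ on the ball $\|h\|_{\nu,r_{\mathrm{out}}}\le\delta_{\mathrm{ext}}r_{\mathrm{out}}^{2J+2}$ via $\mathcal L_{\Omega,\infty}^{-1}$; the three tests give contraction, invariance and nonvanishing; the derivative enclosure comes from the second inverse bound; and $C_f=0$ follows from $h=O(r^{-\nu})=o(r^{-d+2/(p-1)})$ together with the agreement of $P_{\mathrm{out}}^{(J)}$ with the slow part up to $O(r^{-2/(p-1)-6})$. One small imprecision is in the derivative step: what is bounded by $\delta_{\mathrm{ext}}r_{\mathrm{out}}^{2J+2}$ is $\mathfrak C_{\nu,r_{\mathrm{out}}}$ times the weighted norm of the right-hand side, not that norm itself.
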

\begin{proof}
	We apply the inverse from Lemma~\ref{lem:pure-slow-inverse} to the correction equation. The three conditions in \eqref{eq:pure-slow-exterior-tests} ensure, respectively, contraction, invariance of the correction ball, and nonvanishing of the resulting profile.

	Since $\operatorname{Re}\beta=-2/(p-1)$,
	\begin{equation*}
		|P_{\mathrm{out}}^{(J)}(r)|=K^{\frac{1}{p-1}}r^{-\frac{2}{p-1}}e^{\operatorname{Re}\Phi_{r_{\mathrm{out}}}(u)}.
	\end{equation*}
	For $0\leq u\leq1$,
	\begin{equation*}
		|\operatorname{Re}\Phi_{r_{\mathrm{out}}}(u)|
		\leq\sum_{n=1}^J|\operatorname{Re}L_n|u^n
		\leq\sum_{n=1}^J|\operatorname{Re}L_n|.
	\end{equation*}
	Therefore
	\begin{equation*}
		|P_{\mathrm{out}}^{(J)}(r)|\geq \frac{K^{\frac{2}{p-1}}}{M}r^{-\frac{2}{p-1}}.
	\end{equation*}
	This lower bound will prevent the correction from producing a zero.

	Consider the closed ball $\{h\in X_{\nu,r_{\mathrm{out}}}:\|h\|_{\nu,r_{\mathrm{out}}} \leq\delta_{\mathrm{ext}}r_{\mathrm{out}}^{2J+2}\}$. For $h$ in this ball and $r\geq r_{\mathrm{out}}$,
	\begin{equation*}
		|h(r)|\leq\delta_{\mathrm{ext}}r_{\mathrm{out}}^{2J+2}r^{-\nu}
		=\delta_{\mathrm{ext}}r_{\mathrm{out}}^{2J+2}r^{-\frac{2}{p-1}-2J-2}
		\leq\delta_{\mathrm{ext}}r^{-\frac{2}{p-1}}.
	\end{equation*}
	Consequently, $|P_{\mathrm{out}}^{(J)}+h|\leq (M+\delta_{\mathrm{ext}})r^{-2/(p-1)}$. We consider the map
	\begin{equation}\label{eq:pure-slow-exterior-fixed-point}
		h\longmapsto\mathcal L_{\Omega,\infty}^{-1}
		\{N(P_{\mathrm{out}}^{(J)}+h)-N(P_{\mathrm{out}}^{(J)})-\mathcal R_{\mathrm{out}}\}.
	\end{equation}
	Since $\|DN(q)\|\leq p|q|^{p-1}$, the first condition in \eqref{eq:pure-slow-exterior-tests} makes \eqref{eq:pure-slow-exterior-fixed-point} a contraction. At zero, the inverse estimate and \eqref{eq:pure-slow-residual-bound} give the norm bound $\mathfrak C_{\nu,r_{\mathrm{out}}}Mr_{\mathrm{out}}^{2J}\mathcal B$. Thus the second condition in \eqref{eq:pure-slow-exterior-tests} makes this ball invariant. Banach's fixed-point theorem gives $h$, and the derivative estimate in Lemma~\ref{lem:pure-slow-inverse} yields \eqref{eq:pure-slow-exterior-enclosure}.

	The inverse identity gives the profile equation. Since
	\begin{equation*}
		|h(r)|\leq\delta_{\mathrm{ext}}r^{-\frac{2}{p-1}}
		<K^{\frac{2}{p-1}}M^{-1}r^{-\frac{2}{p-1}}
		\leq|P_{\mathrm{out}}^{(J)}(r)|,
	\end{equation*}
	the profile is nonvanishing. Smoothness follows from the equation. Also, $r^{2\mu}Q^{\mathrm{out}}\to K^{1/(p-1)}$ by the enclosure. Moreover, for $J\geq2$ the approximation $P_{\mathrm{out}}^{(J)}$ agrees with $|C_s|r^\beta(1+c_1r^{-2}+c_2r^{-4})$, the slow part in \eqref{eq:fast-coefficient-definition}, up to $O(r^{-2/(p-1)-6})$, where $c_1=\ell_1$ and $c_2=\ell_2+\ell_1^2/2$. Both this difference and $h$ are $o(r^{-d+2/(p-1)})$. The definition of the nonlinear fast coefficient therefore gives $C_f=0$.
\end{proof}

In this proposition the parameter $\delta_{\mathrm{ext}}$ measures the size of the correction. In the following argument, we will use two different choices of this parameter. The first one will be used to show uniqueness of the exterior solution, and the second choice will be used inside the computer-assisted argument.

The exterior construction above is initially carried out at one finite order. To use the resulting profile in the later global estimates, we need a single exterior solution with an expansion to every algebraic order, rather than a different solution at each truncation order.

\begin{cor}
	\label{cor:pure-slow-full-expansion}
	The exterior solution constructed in
	Proposition~\ref{prop:pure-slow-exterior-certificate} satisfies,
	for every integer $J\geq1$,
	\begin{equation}\label{eq:pure-slow-full-expansion}
		\begin{aligned}
			Q^{\mathrm{out}}    & =P_{\mathrm{out}}^{(J)}+O(r^{-\frac{2}{p-1}-2J-2}), \\
			(Q^{\mathrm{out}})' & =(P_{\mathrm{out}}^{(J)})'
			+O(r^{-\frac{2}{p-1}-2J-3}).
		\end{aligned}
	\end{equation}
	In particular, its algebraic asymptotic expansion contains only the slow mode.
\end{cor}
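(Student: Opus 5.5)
The plan is to show that the single exterior solution $Q^{\mathrm{out}}$ from Proposition~\ref{prop:pure-slow-exterior-certificate}, built at a fixed order $J_0\geq2$ with a correction of size $\delta_{\mathrm{ext}}$, coincides on a neighborhood of infinity with the solution obtained by the same fixed point construction at any higher order $J$. This is a uniqueness argument in a weaker weighted space. The expansion at order $J$ then follows from the enclosure \eqref{eq:pure-slow-exterior-enclosure} at that order. Lower orders $J<J_0$ follow from higher ones, since $P_{\mathrm{out}}^{(J+1)}-P_{\mathrm{out}}^{(J)}=O(r^{-2/(p-1)-2J-2})$, with derivative $O(r^{-2/(p-1)-2J-3})$, because $e^{\Phi_{J+1}}-e^{\Phi_J}=O(r^{-2J-2})$ and differentiation in $r$ gains a factor $r^{-1}$ for functions of $r^{-2}$. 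This also gives the improved derivative rate $r^{-2/(p-1)-2J-3}$: it comes from applying the enclosure at order $J+1$ and then comparing $P^{(J+1)}_{\mathrm{out}}$ with $P^{(J)}_{\mathrm{out}}$.

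\emph{Step 1: existence at every order for large radii.} Fix $J\geq J_0$. The coefficients $\ell_n$ are fixed numbers, so $M$ and $\mathcal B$ in Lemma~\ref{lem:pure-slow-finite-residual} with $\chi=2$ depend on $r_{\mathrm{out}}$ only through $L_n=\ell_nr_{\mathrm{out}}^{-2n}$. Hence $M\to K^{1/(p-1)}$ and $\mathcal B$ remains bounded as $r_{\mathrm{out}}\to\infty$. The dominant term is $|G_J|\sim |\ell_J|\cdots r_{\mathrm{out}}^{-2J}\cdot r_{\mathrm{out}}^{2J}$; it is bounded in any case by the homogeneity of $G_n$. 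Likewise $\mathfrak C_{\nu,r_{\mathrm{out}}}$ stays bounded. So for $R_J$ large, the tests \eqref{eq:pure-slow-exterior-tests} hold with $\delta_{\mathrm{ext}}=K^{2/(p-1)}/(2M)$, say. This yields a solution $Q^{(J)}$ on $[R_J,\infty)$ with $|Q^{(J)}-P^{(J)}_{\mathrm{out}}|\lesssim r^{-2/(p-1)-2J-2}$ and the corresponding derivative bound.

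\emph{Step 2: uniqueness.} This is the main obstacle, and I will handle it as follows. Take $R\geq\max(R_J,r_{\mathrm{out}})$ and consider the difference $w=Q^{\mathrm{out}}-Q^{(J)}$ on $[R,\infty)$. Both functions are $P^{(J_0)}_{\mathrm{out}}+O(r^{-\nu_0})$ with $\nu_0=2/(p-1)+2J_0+2>d-2/(p-1)$. Indeed, $P^{(J)}_{\mathrm{out}}-P^{(J_0)}_{\mathrm{out}}=O(r^{-\nu_0})$, so $w\in X_{\nu_0,R}$. Each function satisfies $L_\Omega Q=N(Q)$, so $L_\Omega w=N(Q^{\mathrm{out}})-N(Q^{(J)})=:g\in X_{\nu_0+2,R}$, with $\|g\|_{\nu_0+2,R}\leq p(2M)^{p-1}\|w\|_{\nu_0,R}$. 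The function $w-\mathcal L^{-1}_{\Omega,\infty}g$ solves $L_\Omega=0$, so it equals $aS_\Omega+bF_\Omega$. It also decays like $r^{-\nu_0}$, faster than both modes: $S_\Omega\sim r^{-2/(p-1)}$ and $F_\Omega\sim r^{-d+2/(p-1)}$, by \eqref{eq:pure-slow-mode-bounds} and the leading asymptotics in Lemma~\ref{lem:profile-linear-modes}. Since the modes have distinct moduli rates, this forces $a=b=0$. Hence $w=\mathcal L^{-1}_{\Omega,\infty}g$, and Lemma~\ref{lem:pure-slow-inverse} with weight $\nu_0$ (applied with $g\in X_{\nu_0+2}$, so that the weight gains $R^{-2}$) gives
\begin{equation*}
\|w\|_{\nu_0,R}\leq \mathfrak C_{\nu_0,R}\,R^{-2}\,p(2M)^{p-1}\|w\|_{\nu_0,R}.
\end{equation*}
For $R$ large the factor is $<1$, so $w\equiv0$ on $[R,\infty)$.

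\emph{Step 3: conclusion.} By Step 2, $Q^{\mathrm{out}}=Q^{(J)}$ on $[R,\infty)$, and ODE uniqueness for the regular first-order system away from $0$ extends the equality to all of $[r_{\mathrm{out}},\infty)$; in any case only the behavior at infinity matters for \eqref{eq:pure-slow-full-expansion}. The Step~1 bounds on $Q^{(J)}$ then give \eqref{eq:pure-slow-full-expansion} for every $J\geq J_0$, and the lower orders follow by the comparison described in the first paragraph. Since each $P^{(J)}_{\mathrm{out}}$ involves only $r^\beta$ times a series in $r^{-2}$, the algebraic expansion consists of the slow mode alone; in particular no $F_\Omega$ component arises. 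The delicate points are keeping the constants in Step~2 uniform, which uses $\Omega$ fixed and $|\Omega|\leq\Omega_{\mathrm{bd}}$, and checking that $\nu_0$ exceeds $d-2/(p-1)$ so that Lemma~\ref{lem:pure-slow-inverse} applies with weight $\nu_0$.
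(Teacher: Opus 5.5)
Your proposal is correct and follows essentially the same route as the paper: existence at each order $J$ on a large-radius tail via Proposition~\ref{prop:pure-slow-exterior-certificate}, then uniqueness on a common tail by writing the difference as $\mathcal L_{\Omega,\infty}^{-1}$ applied to the difference of the nonlinearities and using the $r^{-2}$ gain to contract, then ODE uniqueness, and finally the derivative rate from the enclosure at order $J+1$. The differences are cosmetic: you compare the certified $Q^{\mathrm{out}}$ directly with each $Q^{(J)}$ in the weight $\nu_0$ instead of comparing pairwise in the weight $\frac{2}{p-1}+6$, and you fix $\delta_{\mathrm{ext}}$. One small correction is that $\mathcal B$ stays bounded simply because $G_n=H_nr_{\mathrm{out}}^{-2n}\to0$ while the exponential term tends to $K2^{-J}$, not for the reason you sketch.
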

\begin{proof}
	Fix an order $J\geq2$. With $\chi=2$, the residual constant in \eqref{eq:pure-slow-residual-majorant} remains bounded as $r_{\mathrm{out}}\to\infty$, while $M,K^{2/(p-1)}/M\to K^{1/(p-1)}$. Set $\delta_{\mathrm{ext}}=2\mathfrak C_{\nu,r_{\mathrm{out}}}M\mathcal B/r_{\mathrm{out}}^2$. For sufficiently large $r_{\mathrm{out}}$, both $\delta_{\mathrm{ext}}$ and the contraction constant tend to zero. Hence the three conditions in \eqref{eq:pure-slow-exterior-tests} hold. Thus, for each fixed $J$, Proposition~\ref{prop:pure-slow-exterior-certificate} applies and gives an exact exterior solution
	\begin{equation*}
		Q_{\mathrm{out}}^{(J)}=P_{\mathrm{out}}^{(J)}
		+O(r^{-\frac{2}{p-1}-2J-2}),\qquad r\geq R_J.
	\end{equation*}
	We now show that these constructions, that depend on $J$, define one solution. Let $Q_{\mathrm{out}}^{(J_1)}$ and $Q_{\mathrm{out}}^{(J_2)}$ be two of them, and work on the common region $r\geq R_*:=\max\{R_{J_1},R_{J_2}\}$. Since $J_1, J_2 \geq 2$, their difference $w$ satisfies
	\begin{equation*}
		L_\Omega w=N(Q_{\mathrm{out}}^{(J_1)})
		-N(Q_{\mathrm{out}}^{(J_2)})=:f,\qquad
		w=O(r^{-\frac{2}{p-1}-6}),\qquad f=O(r^{-\frac{2}{p-1}-8}).
	\end{equation*}
	Since both terms in $w-\mathcal L_{\Omega,\infty}^{-1}f$ decay faster than the slow and fast modes, its coefficients in the fundamental system $\{S_\Omega,F_\Omega\}$ vanish. Then this homogeneous solution is zero, and hence $w=\mathcal L_{\Omega,\infty}^{-1}f$. If $|Q_{\mathrm{out}}^{(J_1)}|,|Q_{\mathrm{out}}^{(J_2)}| \leq M_*r^{-2/(p-1)}$, then
	\begin{equation*}
		|f|\leq pM_*^{p-1}r^{-2}|w|.
	\end{equation*}
	Lemma~\ref{lem:pure-slow-inverse} consequently gives
	\begin{equation*}
		\|w\|_{2/(p-1)+6,r_{\mathrm{out}}}
		\leq p\mathfrak C_{2/(p-1)+6,r_{\mathrm{out}}}M_*^{p-1}r_{\mathrm{out}}^{-2}
		\|w\|_{2/(p-1)+6,r_{\mathrm{out}}}.
	\end{equation*}
	For sufficiently large $R_*$ the coefficient is less than one, so $w=0$ on the common exterior region. ODE uniqueness then extends this equality throughout the common interval of regularity. We denote the resulting common solution simply by $Q^{\mathrm{out}}$. The solution certified at the computational $r_{\mathrm{out}}$ satisfies the same tail bounds by \eqref{eq:pure-slow-exterior-enclosure}, so this uniqueness argument identifies it with $Q^{\mathrm{out}}$, first on a tail and then on $[r_{\mathrm{out}},\infty)$ by ODE uniqueness.

	The estimate for $Q_{\mathrm{out}}^{(J)}$ now applies to this common $Q^{\mathrm{out}}$, proving the first line of \eqref{eq:pure-slow-full-expansion}. Applying the derivative estimate in \eqref{eq:pure-slow-exterior-enclosure} at order $J+1$, and using $(P_{\mathrm{out}}^{(J+1)})'-(P_{\mathrm{out}}^{(J)})' =O(r^{-2/(p-1)-2J-3})$, proves the second line. The case $J=1$ follows by truncating the expansion at order two, for both the value and the derivative.
\end{proof}

The matching argument that will be used varies $(\Omega,K)$ over the parameter cube $D$. It therefore requires $\bigl(Q^{\mathrm{out}}(r_{\mathrm{out}}), (Q^{\mathrm{out}})'(r_{\mathrm{out}})\bigr)$ to depend continuously on $(\Omega,K)$. The three inequalities in the following corollary are the same as those in \eqref{eq:pure-slow-exterior-tests}, except that they are now required to hold uniformly.

\begin{cor}\label{cor:pure-slow-exterior-family}
	Suppose there is a single $\delta_{\mathrm{ext}}>0$
	such that all three conditions \eqref{eq:pure-slow-exterior-tests}
	hold with this same $\delta_{\mathrm{ext}}$ for every $(\Omega,K)$ satisfying
	$|\Omega-\Omega_0|\leq\rho$ and $|K-K_0|\leq\rho$,
	with $\rho$ as in \eqref{eq:pure-slow-parameter-box}.

	Then the map
	\begin{equation*}
		(\Omega,K)\longmapsto
		\bigl(Q^{\mathrm{out}}(r_{\mathrm{out}}),(Q^{\mathrm{out}})'(r_{\mathrm{out}})\bigr),
	\end{equation*}
	is continuous.
\end{cor}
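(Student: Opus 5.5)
The plan is to derive continuity from the uniform contraction in Proposition~\ref{prop:pure-slow-exterior-certificate}, via the standard fact that the fixed point of a uniform contraction depends continuously on parameters when the map itself does. Fix a single $\delta_{\mathrm{ext}}$ as in the hypothesis. Write $\mathcal T_{\Omega,K}$ for the map \eqref{eq:pure-slow-exterior-fixed-point} acting on the closed ball $B=\{h:\|h\|_{\nu,r_{\mathrm{out}}}\leq\delta_{\mathrm{ext}}r_{\mathrm{out}}^{2J+2}\}$ in $X_{\nu,r_{\mathrm{out}}}$. This ball does not depend on $(\Omega,K)$. By the first condition in \eqref{eq:pure-slow-exterior-tests}, each $\mathcal T_{\Omega,K}$ is a contraction on $B$. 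Its constant is $\lambda(\Omega,K)=p\mathfrak C_{\nu,r_{\mathrm{out}}}r_{\mathrm{out}}^{-2}(M+\delta_{\mathrm{ext}})^{p-1}<1$, and since $M$ depends continuously on $K$ and on the finitely many coefficients $\ell_n$, it has a uniform bound $\lambda_*<1$ on the compact parameter square. For fixed points $h_1,h_2$ at parameters $\vartheta_1,\vartheta_2$ we then have
\begin{equation*}
\|h_1-h_2\|\leq(1-\lambda_*)^{-1}\|\mathcal T_{\vartheta_1}h_2-\mathcal T_{\vartheta_2}h_2\|.
\end{equation*}
So it suffices to show that $\vartheta\mapsto\mathcal T_\vartheta h$ is continuous into $X_{\nu,r_{\mathrm{out}}}$, uniformly for $h\in B$ (or at least at the fixed point $h_2$).

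Continuity of $\mathcal T_\vartheta h$ splits into two parts. The first is the continuity of the forcing $f_\vartheta=N(P^{(J)}_{\mathrm{out}}+h)-N(P^{(J)}_{\mathrm{out}})-\mathcal R_{\mathrm{out}}$ in the weighted norm $\|\cdot\|_{\nu,r_{\mathrm{out}}}$. The coefficients $\ell_n$ are polynomials in $(K,\Omega)$ through the recursions \eqref{eq:pure-slow-E-recursion}--\eqref{eq:pure-slow-ell-recursion}. The dependence of $r^\beta$ on $\Omega$ is the delicate point, because $r^{2i\Omega}$ is not uniformly continuous in $\Omega$ as $r\to\infty$. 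To handle it, I will split at a large radius $R$. On $[r_{\mathrm{out}},R]$ everything is uniformly continuous. On $[R,\infty)$ I will not use continuity at all, only the fact that $f_\vartheta$ actually lies in a slightly stronger weighted space $\nu'>\nu$. For the residual this comes from \eqref{eq:pure-slow-residual-bound}, and for the nonlinear difference from $|N(P+h)-N(P)|\lesssim r^{-2}|h|$. The factor $r^{\nu-\nu'}\leq R^{\nu-\nu'}$ then makes the tail contribution small uniformly in $\vartheta$.

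The second part is continuity of the operator $\mathcal L^{-1}_{\Omega,\infty}$ in $\Omega$. I will use the integral formulas \eqref{eq:profile-general-modes} together with dominated convergence. With the same splitting at $R$, the normalized functions $\widehat S_\Omega,\widehat F_\Omega$ and their derivatives are continuous in $\Omega$ uniformly on $r\geq r_{\mathrm{out}}$, because the dominating bounds in the proof of Lemma~\ref{lem:pure-slow-inverse} are uniform in $|\omega|\leq\Omega_{\mathrm{bd}}$. The oscillatory prefactors $r^{-2\mu}$ and $e^{-ir^2/4}r^{-2\kappa}$ are again handled by a cutoff at $R$ plus a decay margin. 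The tail integrals $I_F,I_S$ from \eqref{eq:pure-slow-tail-bounds} gain a factor $R^{\nu-\nu'}$ when $f\in X_{\nu'}$, which makes them uniformly small beyond $R$.

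Finally, evaluate at $r_{\mathrm{out}}$. The quantity $Q^{\mathrm{out}}(r_{\mathrm{out}})=P^{(J)}_{\mathrm{out}}(r_{\mathrm{out}})+h(r_{\mathrm{out}})$ is continuous because point evaluation at $r_{\mathrm{out}}$ is bounded on $X_\nu$. For the derivative, I use $h'=S_\Omega'I_F-F_\Omega'I_S$ and evaluate at $r=r_{\mathrm{out}}$, which is continuous by the same estimates together with the derivative bound \eqref{eq:pure-slow-inverse-bounds}.

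The main obstacle is the lack of uniform continuity in $\Omega$ near $r=\infty$. The remedy throughout is the decay-margin argument: bootstrap the fixed point into $X_{\nu'}$ for some $\nu'>\nu$, using the stronger residual decay and the $r^{-2}$ gain in the nonlinearity, and then truncate at $R$. A simpler alternative is to prove continuity in the weaker norm $\|\cdot\|_{\nu-\epsilon}$, in which the uniform contraction still holds after slightly shrinking constants for $r_{\mathrm{out}}$ fixed. I will take that route if the margins permit.
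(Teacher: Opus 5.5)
Your main line fails at the decay-margin step, and its target is actually false. The nonlinear difference does gain $r^{-2}$. The residual, however, does not lie in $X_{\nu',r_{\mathrm{out}}}$ for any $\nu'>\nu$. In the variable $t=r^{-2}$, the lowest surviving coefficient of the brace in \eqref{eq:pure-slow-residual-identity} is that of $t^{J+1}$, which equals $H_J-KE_J=i(J+1)\ell_{J+1}$ by \eqref{eq:pure-slow-ell-recursion}. Hence $\mathcal R_{\mathrm{out}}=i(J+1)K^{\frac1{p-1}}\ell_{J+1}r^{\beta-2J-2}(1+O(r^{-2}))$, so the exponent in \eqref{eq:pure-slow-residual-bound} is sharp. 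Correspondingly, Corollary~\ref{cor:pure-slow-full-expansion} at order $J+1$ gives $h_\vartheta=Q^{\mathrm{out}}-P^{(J)}_{\mathrm{out}}=K^{\frac1{p-1}}\ell_{J+1}r^{\beta-2J-2}+O(r^{-\nu-2})$. Since $r^\nu r^{\beta-2J-2}=r^{2i\Omega}$, for $\Omega_1\neq\Omega_2$ the relative phase winds around the circle as $r\to\infty$, and
\[
\limsup_{r\to\infty}r^\nu\bigl|h_{\vartheta_1}(r)-h_{\vartheta_2}(r)\bigr|
=K_1^{\frac1{p-1}}|\ell_{J+1}(\vartheta_1)|+K_2^{\frac1{p-1}}|\ell_{J+1}(\vartheta_2)|.
\]
So unless $\ell_{J+1}$ vanishes, $\vartheta\mapsto h_\vartheta$ is not continuous into $X_{\nu,r_{\mathrm{out}}}$. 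The same expansion holds for $\mathcal T_{\vartheta_1}h_{\vartheta_2}$, so $\|\mathcal T_{\vartheta_1}h_{\vartheta_2}-\mathcal T_{\vartheta_2}h_{\vartheta_2}\|_{\nu,r_{\mathrm{out}}}$ does not tend to zero either, and no bootstrap supplies the margin you want. The oscillation $r^{2i\Omega}$ you flagged cannot be beaten at the exact weight $\nu$; it can only be traded for a loss of weight.

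That trade is your fallback, and it does close, so make it the main argument. Choose $\epsilon>0$ with $\nu-\epsilon>d-\frac2{p-1}$ and $\lambda_\epsilon:=\sup p\,\mathfrak C_{\nu-\epsilon,r_{\mathrm{out}}}r_{\mathrm{out}}^{-2}(M+\delta_{\mathrm{ext}})^{p-1}<1$, the supremum taken over the parameter square. This is possible because the first test in \eqref{eq:pure-slow-exterior-tests} is strict, $\mathfrak C_{\nu',r_{\mathrm{out}}}$ is continuous in $\nu'$, and the square is compact. You need no new invariant ball. Every $h_\vartheta$, and every $\mathcal T_\vartheta h$ with $h\in B$, lies in the original $X_\nu$-ball $B$, and on $B$ each $\mathcal T_\vartheta$ is $\lambda_\epsilon$-Lipschitz for $\|\cdot\|_{\nu-\epsilon,r_{\mathrm{out}}}$. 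Split $\|\mathcal T_{\vartheta_1}h_{\vartheta_2}-\mathcal T_{\vartheta_2}h_{\vartheta_2}\|_{\nu-\epsilon,r_{\mathrm{out}}}$ at a radius $R$:
\begin{itemize}
\item On $r\ge R$ the contribution is at most $2\delta_{\mathrm{ext}}r_{\mathrm{out}}^{2J+2}R^{-\epsilon}$, by membership in $B$.
\item On $[r_{\mathrm{out}},R]$ it tends to zero by dominated convergence in $I_F,I_S$. This uses pointwise continuity of $S_\Omega,F_\Omega,f_\vartheta$, the $\Omega$-uniform majorants in \eqref{eq:pure-slow-mode-bounds}--\eqref{eq:pure-slow-tail-bounds}, and a uniform bound on $\|f_\vartheta\|_{\nu,r_{\mathrm{out}}}$.
\end{itemize}
The same dominated convergence handles $h_\vartheta'(r_{\mathrm{out}})=S_\Omega'I_F-F_\Omega'I_S$.

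The paper avoids norm continuity altogether, using compactness and uniqueness:
\begin{itemize}
\item For $(\Omega_n,K_n)\to(\Omega,K)$, the uniform enclosure \eqref{eq:pure-slow-exterior-enclosure} and the equation give $C^2$ bounds on compacts.
\item Arzel\`a--Ascoli gives $C^1_{\mathrm{loc}}$ subsequential limits.
\item Each limit solves the profile equation at $(\Omega,K)$ and inherits the enclosure pointwise.
\item The tail-uniqueness argument of Corollary~\ref{cor:pure-slow-full-expansion}, plus ODE uniqueness, identifies the limit with $Q^{\mathrm{out}}$.
\end{itemize}
Since $C^1_{\mathrm{loc}}$ convergence never sees $r=\infty$, the phase problem never arises, and no continuity of $\mathcal L^{-1}_{\Omega,\infty}$ or of the modes in $\Omega$ is needed; the price is reliance on that uniqueness lemma. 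Your corrected route instead gives convergence of the whole correction in $X_{\nu-\epsilon,r_{\mathrm{out}}}$, at the cost of establishing those continuity facts.
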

\begin{proof}
	Let $(\Omega_n,K_n)\to(\Omega,K)$ in the parameter rectangle above, and denote the corresponding exterior solutions by $Q_n^{\mathrm{out}}$. The coefficients of $P_{\mathrm{out}}^{(J)}$ depend continuously on $(\Omega,K)$. Thus \eqref{eq:pure-slow-exterior-enclosure} bounds $Q_n^{\mathrm{out}}$ and $(Q_n^{\mathrm{out}})'$ uniformly on every compact interval of $[r_{\mathrm{out}},\infty)$, and the profile equation also bounds $(Q_n^{\mathrm{out}})''$. By Arzel\`a--Ascoli and a diagonal extraction, every subsequence has a further subsequence converging in $C^1_{\mathrm{loc}}([r_{\mathrm{out}},\infty))$.

	Any such limit solves the profile equation at $(\Omega,K)$ and inherits the enclosure \eqref{eq:pure-slow-exterior-enclosure}, by passing to the limit at each $r\geq r_{\mathrm{out}}$. Its difference from $Q^{\mathrm{out}}$ at $(\Omega,K)$ is therefore $O(r^{-2/(p-1)-2J-2})$, hence $O(r^{-2/(p-1)-6})$ because $J\geq2$. The uniqueness argument in Corollary~\ref{cor:pure-slow-full-expansion} identifies the two solutions on a sufficiently distant tail, and ODE uniqueness extends their equality to $[r_{\mathrm{out}},\infty)$.

	Consequently the entire sequence converges to $Q^{\mathrm{out}}$ in $C^1_{\mathrm{loc}}([r_{\mathrm{out}},\infty))$. Evaluation at $r_{\mathrm{out}}$ proves the claimed continuity.
\end{proof}

\subsection{Interior solution at the origin}
We next construct and enclose the profile starting from its data at the origin. First, an even Taylor polynomial is chosen so that the low orders of the profile equation vanish, leaving an explicitly computable finite residual. We then use the integral formula with zero initial data to correct this polynomial to the exact solution $Q^{\mathrm{in}}$. The resulting value and derivative bounds at $r=r_{\mathrm{in}}$ provide the initial data needed to propagate the inner solution to the matching radius.

Fix $r_{\mathrm{in}}>0$ and an integer $I\geq1$. Define the inner Taylor polynomial, truncated at order $I$, by
\begin{equation*}
	P_{\mathrm{in}}^{(I)}(r)=\sum_{n=0}^{I}c_nr^{2n},\qquad c_0=A,
\end{equation*}
where the coefficients are determined by the recurrence
\begin{equation}\label{eq:pure-slow-origin-recursion}
	\begin{aligned}
		2(n+1)(2n+d)c_{n+1}
		       & =[z^n]\bigl(C_n(z)^{\frac{p+1}{2}}\overline C_n(z)^{\,\frac{p-1}{2}}\bigr)
		-\left(\Omega+i\left(\frac{1}{p-1}+n\right)\right)c_n,                              \\
		C_n(z) & =\sum_{j=0}^n c_jz^j.
	\end{aligned}
\end{equation}
The recurrence is used for $0\leq n<I$. Here $[z^n]$ denotes coefficient extraction, and the bar conjugates coefficients. The residual $\mathcal R_{\mathrm{in}} =L_\Omega P_{\mathrm{in}}^{(I)}-N(P_{\mathrm{in}}^{(I)})$ is a polynomial in $r^2$. Its coefficients below degree $I$ vanish identically. We denote its coefficients by $d_n$, so that
\begin{equation}\label{eq:pure-slow-origin-residual-bound}
	\mathcal R_{\mathrm{in}}(r)=\sum_{n=I}^{pI}d_nr^{2n} \quad \text{and} \quad
	\|\mathcal R_{\mathrm{in}}\|_{L^\infty(0,r_{\mathrm{in}})}
	\leq H_0:=\sum_{n=I}^{pI}|d_n|r_{\mathrm{in}}^{2n}.
\end{equation}

A direct contraction converts this finite residual bound into a bound for the actual origin solution. Explicit constants are needed in our computer-assisted argument. In the following lemma we set $L_0=\left|\Omega+\frac{i}{p-1}\right|+p(1/2)^{p-1}$, which will be used as a Lipschitz bound for the full right-hand side in a neighborhood of the origin. We will close a fixed point argument in the ball of radius
\begin{equation*}
	\delta_{\mathrm{in}}=
	\frac{2r_{\mathrm{in}}^2H_0}{2d-r_{\mathrm{in}}^2L_0}.
\end{equation*}
\begin{lem}\label{lem:origin-contraction}
	Suppose that $|A| \leq 1/2$ and set $r_{\mathrm{in}}$ small enough so that
	\begin{equation}\label{eq:pure-slow-origin-tests}
		\frac{r_{\mathrm{in}}^2L_0}{2d}<1,\qquad
		\sup_{[0,r_{\mathrm{in}}]}|P_{\mathrm{in}}^{(I)}|+\delta_{\mathrm{in}}<\frac12,
	\end{equation}
	then there is an exact solution $Q^{\mathrm{in}}$ of \eqref{eq:complex-profile-radial} with $Q^{\mathrm{in}}(0)=A$, $(Q^{\mathrm{in}})'(0)=0$, and
	\begin{equation}\label{eq:pure-slow-origin-enclosure}
		|Q^{\mathrm{in}}(r_{\mathrm{in}})-P_{\mathrm{in}}^{(I)}(r_{\mathrm{in}})|\leq\delta_{\mathrm{in}},\qquad
		|(Q^{\mathrm{in}})'(r_{\mathrm{in}})-(P_{\mathrm{in}}^{(I)})'(r_{\mathrm{in}})|
		\leq\frac{r_{\mathrm{in}}}{d}
		(H_0+L_0\delta_{\mathrm{in}}).
	\end{equation}
\end{lem}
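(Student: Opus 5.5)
We write $Q^{\mathrm{in}}=P_{\mathrm{in}}^{(I)}+w$ and solve for the correction $w$ by a contraction, using the integral formulation from the proof of Lemma~\ref{lem:profile-local-regularity}. Set $g(q)=N(q)-\left(\Omega+\frac{i}{p-1}\right)q$, so the profile equation reads $\bigl(r^{d-1}e^{ir^2/4}q'\bigr)'=r^{d-1}e^{ir^2/4}g(q)$. Since $L_\Omega P_{\mathrm{in}}^{(I)}-N(P_{\mathrm{in}}^{(I)})=\mathcal R_{\mathrm{in}}$, the correction must satisfy
\[
\bigl(r^{d-1}e^{ir^2/4}w'\bigr)'=r^{d-1}e^{ir^2/4}\Bigl(g(P_{\mathrm{in}}^{(I)}+w)-g(P_{\mathrm{in}}^{(I)})-\mathcal R_{\mathrm{in}}\Bigr).
\]
We impose $w(0)=0$ and $w'(0)=0$. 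Both endpoint contributions vanish at the origin because $P_{\mathrm{in}}^{(I)}$ is an even polynomial. We therefore define, on the ball $\mathcal B=\{w\in C([0,r_{\mathrm{in}}]):\|w\|_\infty\leq\delta_{\mathrm{in}}\}$, the map
\[
(\mathcal Tw)(r)=\int_0^r t^{1-d}e^{-it^2/4}\int_0^t\sigma^{d-1}e^{i\sigma^2/4}\Bigl(g(P_{\mathrm{in}}^{(I)}+w)-g(P_{\mathrm{in}}^{(I)})-\mathcal R_{\mathrm{in}}\Bigr)(\sigma)\,d\sigma\,dt.
\]

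The key pointwise estimate is a Lipschitz bound. For $|z|,|z'|<1/2$ we have $|N(z)-N(z')|\leq p\max(|z|,|z'|)^{p-1}|z-z'|\leq p(1/2)^{p-1}|z-z'|$, because $\|DN(q)\|\leq p|q|^{p-1}$ and the disc is convex. Hence $|g(z)-g(z')|\leq L_0|z-z'|$. The second test in \eqref{eq:pure-slow-origin-tests} guarantees that $P_{\mathrm{in}}^{(I)}+w$ stays in this disc for every $w\in\mathcal B$.

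Exactly as in the proof of Lemma~\ref{lem:profile-local-regularity}, the unimodular exponentials and $\int_0^t\sigma^{d-1}d\sigma=t^d/d$ give
\[
|(\mathcal Tw)(r)|\leq\frac{r^2}{2d}\bigl(L_0\delta_{\mathrm{in}}+H_0\bigr),\qquad
\|\mathcal Tw-\mathcal T\widetilde w\|_\infty\leq\frac{r_{\mathrm{in}}^2L_0}{2d}\|w-\widetilde w\|_\infty,
\]
where the first bound uses \eqref{eq:pure-slow-origin-residual-bound}. The first test in \eqref{eq:pure-slow-origin-tests} then makes $\mathcal T$ a contraction. Invariance of $\mathcal B$ requires $\frac{r_{\mathrm{in}}^2}{2d}(L_0\delta_{\mathrm{in}}+H_0)\leq\delta_{\mathrm{in}}$. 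This is precisely $\delta_{\mathrm{in}}(2d-r_{\mathrm{in}}^2L_0)\geq r_{\mathrm{in}}^2H_0$, and the stated choice of $\delta_{\mathrm{in}}$ satisfies it with a factor of two to spare.

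Banach's theorem gives a fixed point $w$. By the regularity argument of Lemma~\ref{lem:profile-local-regularity}, $Q^{\mathrm{in}}=P_{\mathrm{in}}^{(I)}+w$ is a smooth solution with $Q^{\mathrm{in}}(0)=A$ and $(Q^{\mathrm{in}})'(0)=0$. By uniqueness it coincides with $Q_{A,\Omega}$ on $[0,r_{\mathrm{in}}]$.

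The value bound in \eqref{eq:pure-slow-origin-enclosure} is immediate from $\|w\|_\infty\leq\delta_{\mathrm{in}}$. For the derivative we differentiate the fixed-point identity:
\[
w'(r)=r^{1-d}e^{-ir^2/4}\int_0^r\sigma^{d-1}e^{i\sigma^2/4}(\cdots)\,d\sigma .
\]
Bounding the integrand by $L_0\delta_{\mathrm{in}}+H_0$ gives $|w'(r_{\mathrm{in}})|\leq\frac{r_{\mathrm{in}}}{d}(H_0+L_0\delta_{\mathrm{in}})$.

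The only delicate point is keeping every iterate inside the disc $|q|<1/2$, where the Lipschitz constant $L_0$ is valid. This is handled by the second test and by the hypothesis $|A|\leq 1/2$. The rest is routine, mirroring the earlier local existence proof with explicit constants.
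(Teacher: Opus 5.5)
Your proposal is correct and is the paper's argument: the paper also writes $Q^{\mathrm{in}}=P_{\mathrm{in}}^{(I)}+h$ and inverts $h''+((d-1)/r+ir/2)h'$ with zero data at the origin by the same double integral $T_0$, with bounds $\frac{r_{\mathrm{in}}^2}{2d}\|f\|_\infty$ for the value and $\frac{r_{\mathrm{in}}}{d}\|f\|_\infty$ for the derivative. It then closes the contraction on the ball of radius $\delta_{\mathrm{in}}$ using the Lipschitz constant $L_0$ on $|q|<1/2$, and identifies the fixed point with the regular solution by uniqueness; your remark that the boundary terms vanish because $P_{\mathrm{in}}^{(I)}$ is even spells out a step the paper leaves implicit.
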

\begin{proof}
	The inverse with zero data $h(0) = h'(0)=0$ of the ODE $h''+((d-1)/r+ir/2)h'=f$ is
	\begin{equation*}
		(T_0f)(r)=\int_0^r t^{1-d}e^{-\frac{it^2}{4}}
		\int_0^t \sigma^{d-1}e^{\frac{i\sigma^2}{4}}f(\sigma)\,d\sigma\,dt.
	\end{equation*}
	It satisfies
	\begin{equation*}
		|(T_0f)'(r_{\mathrm{in}})|\leq\frac{r_{\mathrm{in}}}{d}\|f\|_\infty
		\qquad \text{ so } \qquad
		\|T_0f\|_\infty\leq\frac{r_{\mathrm{in}}^2}{2d}\|f\|_\infty.
	\end{equation*}
	The definition of $\delta_{\mathrm{in}}$ and the first inequality in \eqref{eq:pure-slow-origin-tests} give
	\begin{equation*}
		\frac{r_{\mathrm{in}}^2H_0}{2d}
		+\frac{r_{\mathrm{in}}^2L_0}{2d}\delta_{\mathrm{in}}\leq\delta_{\mathrm{in}}.
	\end{equation*}
	Hence the map
	\begin{equation*}
		h\longmapsto
		T_0\{N(P_{\mathrm{in}}^{(I)}+h)-N(P_{\mathrm{in}}^{(I)})
		-\left(\Omega+\frac{i}{p-1}\right)h-\mathcal R_{\mathrm{in}}\}
	\end{equation*}
	contracts on $\|h\|_\infty\leq\delta_{\mathrm{in}}$. Its fixed point gives the regular-origin solution from \eqref{eq:profile-local-integral}, by uniqueness, and the bounds for $T_0$ give \eqref{eq:pure-slow-origin-enclosure}. The same uniqueness identifies the constructed function with the smooth profile.
\end{proof}

\subsection{Matching the inner and outer solutions}

This subsection sets up the computer-assisted matching argument by comparing the exact inner and outer solutions selected at the origin and at infinity. We define the corresponding matching maps and state the criterion that will transfer rigorous finite computations to the exact problem.

For the computer-assisted argument, we set the parameters $r_{\mathrm{in}},r_{\mathrm{m}},r_{\mathrm{out}}$, which represent an inner, matching and outer radius respectively. We use $I,J$ for truncation orders of Taylor polynomials and exterior expansions respectively. These parameters are fixed in the code. Recall the parameter cube is defined by \eqref{eq:pure-slow-parameter-box}.

For each $\vartheta=(A,\Omega,K)$, we compare two solutions on each shooting interval. The solutions $Q^{\mathrm{in}}$ and $Q^{\mathrm{out}}$ are selected by the desired exact boundary conditions $Q^{\mathrm{in}}(0)=A$, $(Q^{\mathrm{in}})'(0)=0$, while $Q^{\mathrm{out}}$ has $C_f=0$ and $C_s=K^{1/(p-1)}$. Meanwhile $\widetilde Q^{\mathrm{in}}$ and $\widetilde Q^{\mathrm{out}}$ instead start from computable data. Specifically,
\begin{equation*}
	(\widetilde Q^{\mathrm{in}},(\widetilde Q^{\mathrm{in}})')(r_{\mathrm{in}})
	=(P_{\mathrm{in}}^{(I)},(P_{\mathrm{in}}^{(I)})')(r_{\mathrm{in}}),
	\qquad
	(\widetilde Q^{\mathrm{out}},(\widetilde Q^{\mathrm{out}})')(r_{\mathrm{out}})
	=(P_{\mathrm{out}}^{(J)},(P_{\mathrm{out}}^{(J)})')(r_{\mathrm{out}}).
\end{equation*}
All four functions solve the same profile ODE. The computable solutions $\widetilde Q^{\mathrm{in}}$ and $\widetilde Q^\mathrm{out}$ are the ones that we consider in the computer-assisted argument while the true solutions $Q^{\mathrm{in}}$ and $Q^{\mathrm{out}}$ are not explicit, as they exist only by the fixed point arguments above. We bound their difference using estimates from the contraction arguments.

For $\sigma\in\{\mathrm{in},\mathrm{out}\}$, define the exact and finite-data profile state vectors
\begin{equation*}
	\mathbf Q_\vartheta^\sigma(r)
	:=\begin{pmatrix}Q_\vartheta^\sigma(r)\\(Q_\vartheta^\sigma)'(r)\end{pmatrix},
	\qquad
	\widetilde{\mathbf Q}_\vartheta^\sigma(r)
	:=\begin{pmatrix}\widetilde Q_\vartheta^\sigma(r)\\(\widetilde Q_\vartheta^\sigma)'(r)\end{pmatrix}.
\end{equation*}
Where these solutions exist and are nonzero, set
\begin{equation*}
	\mathcal H\begin{pmatrix}q\\v\end{pmatrix}:=
	\begin{pmatrix}
		\log|q| \\ r_{\mathrm{m}}\operatorname{Re}(v/q)\\
		r_{\mathrm{m}}\operatorname{Im}(v/q)
	\end{pmatrix},
\end{equation*}
and define for $\vartheta=(A,\Omega,K)$ the matching maps
\begin{equation}\label{eq:pure-slow-matching-map}
	\begin{aligned}
		\mathcal M(\vartheta)
		 & :=\mathcal H(\mathbf Q_\vartheta^{\mathrm{in}})
		-\mathcal H(\mathbf Q_\vartheta^{\mathrm{out}}),               \\
		\widetilde{\mathcal M}(\vartheta)
		 & :=\mathcal H(\widetilde{\mathbf Q}_\vartheta^{\mathrm{in}})
		-\mathcal H(\widetilde{\mathbf Q}_\vartheta^{\mathrm{out}}),
	\end{aligned}
	\qquad\text{at }r=r_{\mathrm{m}}.
\end{equation}
These maps measure the mismatch between the inner and outer solutions at the matching radius $r_{\mathrm{m}}$. Logarithmic coordinates are chosen so that the difference is a ratio as opposed to an absolute difference. The factor $r_{\mathrm{m}}$ makes $r_{\mathrm{m}} v/q$ the derivative of $\log q$ with respect to $\log r$, whose real and imaginary parts are on a dimensionless scale. A zero of the exact map $\mathcal{M}$ is precisely the condition needed to join the two solutions, while the finite map $\widetilde{\mathcal{M}}$ is the computable object whose value and derivative are used in the computer-assisted existence argument.

The map $\mathcal M$ is continuous by Lemma~\ref{lem:profile-local-regularity} and Corollary~\ref{cor:pure-slow-exterior-family}, together with existence and nonvanishing on both shooting intervals from Lemma~\ref{lem:pure-slow-computer-bounds}. The initial data $P_{\mathrm{in}}^{(I)}$, $P_{\mathrm{out}}^{(J)}$ depend real analytically on the parameters. By $C^1$ dependence of the ODE flow on initial data and parameters, and smoothness of $\mathcal H$ for $q\ne0$, the map $\widetilde{\mathcal M}$ is $C^1$ on its domain of regularity. Its derivative is computed using
\begin{equation}\label{eq:pure-slow-matching-derivative}
	D\mathcal H\begin{pmatrix}q\\v\end{pmatrix}
	\begin{pmatrix}u\\w\end{pmatrix}=
	\begin{pmatrix}
		\operatorname{Re}(u/q)                      \\
		r_{\mathrm{m}}\operatorname{Re}(w/q-vu/q^2) \\
		r_{\mathrm{m}}\operatorname{Im}(w/q-vu/q^2)
	\end{pmatrix}.
\end{equation}

If $\mathcal M(\vartheta)=0$, a constant unit phase makes the inner and outer Cauchy data agree at $r_{\mathrm{m}}$. Then ODE uniqueness joins them into the desired global profile without changing $C_f=0$ or $|C_s|=K^{1/(p-1)}$. The next proposition establishes a criterion for the existence of such a zero.

\begin{prop}\label{prop:pure-slow-validated-zero}
	Suppose $\mathcal M$ and $\widetilde{\mathcal M}$ are defined on $D$,
	with the regularity stated above and radius $\rho$ as in
	\eqref{eq:pure-slow-parameter-box}. Let $B$ be an invertible real
	$3\times3$ matrix. If
	\begin{equation*}
		\begin{aligned}
			\|B\widetilde{\mathcal M}(\vartheta_0)\|_\infty       & \leq Y,       \\
			\sup_D\|B(\mathcal M-\widetilde{\mathcal M})\|_\infty & \leq E,       \\
			\sup_D\|I-BD\widetilde{\mathcal M}\|_\infty           & \leq Z,\qquad
			Y+E+Z\rho<\rho,
		\end{aligned}
	\end{equation*}
	then $\mathcal M$ has a zero in $D$. The matrix norm is the operator norm induced by the vector infinity norm.
\end{prop}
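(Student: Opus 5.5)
The plan is the standard Newton--Kantorovich/Brouwer argument. Define on $D$ the map
\begin{equation*}
	\mathcal T(\vartheta):=\vartheta-B\mathcal M(\vartheta).
\end{equation*}
Since $B$ is invertible, fixed points of $\mathcal T$ are exactly zeros of $\mathcal M$. The map $\mathcal T$ is continuous on $D$ because $\mathcal M$ is continuous, as stated before the proposition. The set $D$ is a closed cube in $\mathbb R^3$, homeomorphic to a closed ball. Hence, by Brouwer's fixed point theorem, it suffices to show that $\mathcal T(D)\subset D$, that is, $\|\mathcal T(\vartheta)-\vartheta_0\|_\infty\leq\rho$ for all $\vartheta\in D$.

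For $\vartheta\in D$, split
\begin{equation*}
	\mathcal T(\vartheta)-\vartheta_0
	=-B\bigl(\mathcal M(\vartheta)-\widetilde{\mathcal M}(\vartheta)\bigr)
	-B\widetilde{\mathcal M}(\vartheta_0)
	+\Bigl[(\vartheta-\vartheta_0)-B\bigl(\widetilde{\mathcal M}(\vartheta)-\widetilde{\mathcal M}(\vartheta_0)\bigr)\Bigr].
\end{equation*}
The first term is bounded by $E$ and the second by $Y$.

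For the bracket, use that $\widetilde{\mathcal M}$ is $C^1$ on its domain of regularity, which contains the convex set $D$. The fundamental theorem of calculus along the segment $\vartheta_s=\vartheta_0+s(\vartheta-\vartheta_0)$ gives
\begin{equation*}
	\widetilde{\mathcal M}(\vartheta)-\widetilde{\mathcal M}(\vartheta_0)
	=\int_0^1 D\widetilde{\mathcal M}(\vartheta_s)\,ds\,(\vartheta-\vartheta_0),
\end{equation*}
so the bracket equals $\int_0^1\bigl(I-BD\widetilde{\mathcal M}(\vartheta_s)\bigr)ds\,(\vartheta-\vartheta_0)$. Its infinity norm is at most $Z\|\vartheta-\vartheta_0\|_\infty\leq Z\rho$, since the induced operator norm passes through the integral.

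Summing the three bounds gives $\|\mathcal T(\vartheta)-\vartheta_0\|_\infty\leq Y+E+Z\rho<\rho$. Hence $\mathcal T$ maps $D$ into itself, and Brouwer's theorem gives $\vartheta_*\in D$ with $B\mathcal M(\vartheta_*)=0$. By invertibility of $B$, $\mathcal M(\vartheta_*)=0$.

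The only delicate point is regularity. Brouwer requires only continuity of $\mathcal M$, not differentiability, which is why the mean-value step is applied to $\widetilde{\mathcal M}$ alone. We also need $\widetilde{\mathcal M}$ to be $C^1$ on all of $D$, meaning the finite-data solutions exist and stay nonvanishing up to $r_{\mathrm m}$ for every parameter in the cube. This is part of the hypothesis that $\mathcal M$ and $\widetilde{\mathcal M}$ are defined on $D$ with the stated regularity, so no further work is needed.
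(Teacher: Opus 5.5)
Your proposal is correct and follows essentially the same argument as the paper. The paper also applies Brouwer to $\vartheta\mapsto\vartheta-B\mathcal M(\vartheta)$ on the convex cube $D$, and bounds the three pieces by $Y$, $E$, and $Z\rho$ using the same fundamental-theorem-of-calculus identity for $\widetilde{\mathcal M}$ along the segment from $\vartheta_0$. It then concludes from the invertibility of $B$.
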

\begin{proof}
	The continuous map $T(\vartheta)=\vartheta-B\mathcal M(\vartheta)$ satisfies
	\begin{equation*}
		\begin{aligned}
			\|T(\vartheta)-\vartheta_0\|_\infty
			 & \leq \|B\widetilde{\mathcal M}(\vartheta_0)\|_\infty
			+Z\|\vartheta-\vartheta_0\|_\infty
			+\|B(\mathcal M-\widetilde{\mathcal M})(\vartheta)\|_\infty \\
			 & \leq Y+Z\rho+E<\rho.
		\end{aligned}
	\end{equation*}
	The middle term follows from convexity of $D$ and
	\begin{equation*}
		\begin{aligned}
			 & \bigl\|\vartheta-\vartheta_0
			-B\bigl(\widetilde{\mathcal M}(\vartheta)
			-\widetilde{\mathcal M}(\vartheta_0)\bigr)\bigr\|_\infty
			=\left\|\int_0^1
			\bigl[I-BD\widetilde{\mathcal M}(\vartheta_0+t(\vartheta-\vartheta_0))\bigr]
			(\vartheta-\vartheta_0)\,dt\right\|_\infty      \\
			 & \quad\leq Z\|\vartheta-\vartheta_0\|_\infty.
		\end{aligned}
	\end{equation*}
	Thus $T$ maps the compact convex cube $D$ into itself. Brouwer's fixed-point theorem gives a fixed point, and invertibility of $B$ makes it a zero of $\mathcal M$.
\end{proof}

\subsection{Computer-assisted proof of the matching conditions}
We check the inner shooting from $r_{\mathrm{in}}$ to $r_{\mathrm{m}}$ and the outer shooting from $r_{\mathrm{out}}$ back to $r_{\mathrm{m}}$ by the same stepwise estimates. On each mesh step, we have four polynomials that approximate the solution and its three derivatives with respect to $A, \Omega, K$.

Because $\widetilde{\mathcal M}$ is defined by the finite shooting solutions at $r_{\mathrm{m}}$, its value and derivative are bounded by propagating the Cauchy data through the profile equation. The profile ODE can be written as a first-order system for $(q,v)=(Q,Q')$ using the vector field
\begin{equation}\label{eq:pure-slow-first-order-system}
	f_\Omega\left(r,\begin{pmatrix}q\\v\end{pmatrix}\right)
	=\begin{pmatrix}
		v \\
		N(q)-\left(\Omega+\frac{i}{p-1}\right)q
		-\left(\frac{d-1}r+\frac{ir}{2}\right)v
	\end{pmatrix}.
\end{equation}
We write $D f_\Omega$ for the derivative with respect to $(q,v)$. On either mesh, we suppress the label $\sigma \in \left\{ \mathrm{in}, \mathrm{out} \right\}$ on $\mathbf Q_\vartheta^\sigma =(Q_\vartheta^\sigma,(Q_\vartheta^\sigma)')$ and $\widetilde{\mathbf Q}_\vartheta^\sigma =(\widetilde Q_\vartheta^\sigma,(\widetilde Q_\vartheta^\sigma)')$. The exact and finite-data solutions satisfy
\begin{equation*}
	(\mathbf Q_\vartheta)'=f_\Omega(r,\mathbf Q_\vartheta),
	\qquad
	(\widetilde{\mathbf Q}_\vartheta)'
	=f_\Omega(r,\widetilde{\mathbf Q}_\vartheta).
\end{equation*}

To bound $D\widetilde{\mathcal M}$, differentiate $\widetilde{\mathbf Q}_\vartheta$ with respect to its three real parameters. This gives the three variational equations
\begin{equation*}
	\begin{aligned}
		(\partial_A\widetilde{\mathbf Q}_\vartheta)'
		 & =D f_\Omega(r,\widetilde{\mathbf Q}_\vartheta)
		\partial_A\widetilde{\mathbf Q}_\vartheta,                           \\
		(\partial_\Omega\widetilde{\mathbf Q}_\vartheta)'
		 & =D f_\Omega(r,\widetilde{\mathbf Q}_\vartheta)
		\partial_\Omega\widetilde{\mathbf Q}_\vartheta
		-\begin{pmatrix}0\\(\widetilde{\mathbf Q}_\vartheta)_1\end{pmatrix}, \\
		(\partial_K\widetilde{\mathbf Q}_\vartheta)'
		 & =D f_\Omega(r,\widetilde{\mathbf Q}_\vartheta)
		\partial_K\widetilde{\mathbf Q}_\vartheta.
	\end{aligned}
\end{equation*}
Finally, the propagation estimates require bounds on the Jacobian and its variation. We use the Euclidean norms on $\mathbb C$ and $\mathbb C^2$, viewed as real vector spaces, and their induced operator norms for derivatives. Direct differentiation of $q^{(p+1)/2}\overline q^{\,(p-1)/2}$ gives
\begin{equation*}
	\begin{gathered}
		DN(q)u=\frac{p+1}{2}|q|^{p-1}u+\frac{p-1}{2}q^2|q|^{p-3}\overline u,\\
		\|DN(q)\|\leq p|q|^{p-1},\qquad
		\|D^2N(q)\|\leq p(p-1)|q|^{p-2}.
	\end{gathered}
\end{equation*}

We now explain how to propagate solutions and bound the error along a mesh step. Consider a mesh step $r(u)=r_*+hu$, $0\leq u\leq1$, where $h\ne0$ may have either sign, and set $r_{\min}=\min\{r_*,r_*+h\}>0$. Let $P(u)$ approximate $\widetilde{\mathbf Q}_{\vartheta_0}(r(u))$, and write $p_q$ for its first component. For each parameter label $\xi\in\{A,\Omega,K\}$, let $P_\xi(u)$ approximate $\left.\partial_\xi\widetilde{\mathbf Q}_\vartheta(r(u))\right|_{\vartheta=\vartheta_0}$. Primes on these polynomials denote differentiation with respect to $u$. Their integrated residuals satisfy
\begin{equation}\label{eq:pure-slow-step-defects}
	\begin{aligned}
		\int_0^1 \bigl\|P'-h f_{\Omega_0}(r(u),P)\bigr\|\,du
		 & \leq \delta_0,      \\
		\int_0^1 \bigl\|P_A'-h D f_{\Omega_0}(r(u),P)P_A\bigr\|\,du
		 & \leq \delta_A,      \\
		\int_0^1 \bigl\|P_\Omega'
		-h\{D f_{\Omega_0}(r(u),P)P_\Omega-(0,p_q)\}\bigr\|\,du
		 & \leq \delta_\Omega, \\
		\int_0^1 \bigl\|P_K'-h D f_{\Omega_0}(r(u),P)P_K\bigr\|\,du
		 & \leq \delta_K.
	\end{aligned}
\end{equation}
These bounds can be computed through finite polynomial calculations.

At the starting point $r_*$ of a step, suppose we have error bounds
\begin{equation}\label{eq:pure-slow-step-initial-errors}
	\begin{gathered}
		\|\widetilde{\mathbf Q}_{\vartheta_0}(r_*)-P(0)\|
		\leq\varepsilon_{\mathrm{poly}},\\
		\sup_{\vartheta\in D}
		\|\widetilde{\mathbf Q}_\vartheta(r_*)
		-\widetilde{\mathbf Q}_{\vartheta_0}(r_*)\|\leq r_Q,\\
		\sup_{\vartheta\in D}
		\|\mathbf Q_\vartheta(r_*)-\widetilde{\mathbf Q}_\vartheta(r_*)\|
		\leq \delta_{\mathrm{bd}}.
	\end{gathered}
\end{equation}
These bound, respectively, the polynomial error, the variation across the parameter set $D$, and the discrepancy between the exact and finite-data flows. For the parameter derivatives, also suppose
\begin{equation}\label{eq:pure-slow-step-initial-derivative-errors}
	\bigl\|\partial_\xi\widetilde{\mathbf Q}_\vartheta(r_*)\bigr|_{\vartheta=\vartheta_0}
	-P_\xi(0)\bigr\|
	\leq\varepsilon_{\mathrm{poly},\xi}, \qquad
	\sup_{\vartheta\in D}
	\bigl\|\partial_\xi\widetilde{\mathbf Q}_\vartheta(r_*)
	-\partial_\xi\widetilde{\mathbf Q}_\vartheta(r_*)\bigr|_{\vartheta=\vartheta_0}\bigr\|
	\leq r_\xi.
\end{equation}
The derivative bounds are needed to control $D\widetilde{\mathcal M}$ uniformly on $D$. The bounds in \eqref{eq:pure-slow-step-initial-errors}-- \eqref{eq:pure-slow-step-initial-derivative-errors} are checked at the first mesh point by interval arithmetic, with $\delta_{\mathrm{bd}}$ supplied by the origin or exterior boundary contraction argument. On later steps, they follow from the preceding bounds after new error is included.

We denote components of $P$ by $P(u) = (p_q(u), p_v(u))$. Choose an enclosure radius $\varepsilon>0$ and bounds
\begin{equation}\label{eq:pure-slow-step-enclosure-bounds}
	M_{\mathrm{step}}\geq\sup_{u\in[0,1]}|p_q(u)|+\varepsilon,
	\qquad
	V_\xi\geq\sup_{u\in[0,1]}\|P_\xi(u)\|.
\end{equation}
Thus $M_{\mathrm{step}}$ bounds $|q|$ for states within $\varepsilon$ of $P$, while $V_\xi$ bounds the polynomial parameter derivative. Define the growth factor $g$ for perturbations within this enclosure by
\begin{equation*}
	L=
	\frac{1+\sup_D|\Omega|+\frac{1}{p-1}+pM_{\mathrm{step}}^{p-1}}2
	+\begin{cases}0,&h>0,\\(d-1)/r_{\min},&h<0,\end{cases}
	\qquad g=\exp(|h|L).
\end{equation*}
We denote by a subscript $+$ a candidate bound valid throughout the current step, including its endpoint. Define
\begin{equation}\label{eq:pure-slow-propagation-tests}
	\begin{aligned}
		(\varepsilon_{\mathrm{poly}})_+ & =g\bigl(\varepsilon_{\mathrm{poly}}+\delta_0\bigr), \\
		(r_Q)_+                         & =g\bigl(r_Q+|h|\rho M_{\mathrm{step}}\bigr),        \\
		(\delta_{\mathrm{bd}})_+        & =g\delta_{\mathrm{bd}}.
	\end{aligned}
\end{equation}
These will be used as bounds in Lemma~\ref{lem:pure-slow-propagation}. These errors obey
\begin{equation*}
	\begin{aligned}
		(\varepsilon_{\mathrm{poly},\xi})_+
		 & =g\Bigl\{\varepsilon_{\mathrm{poly},\xi}+\delta_\xi
		+p(p-1)|h|M_{\mathrm{step}}^{p-2}V_\xi
		(\varepsilon_{\mathrm{poly}})_+\Bigr\},
		 &                                                           & \xi=A,K, \\
		(\varepsilon_{\mathrm{poly},\Omega})_+
		 & =g\Bigl\{\varepsilon_{\mathrm{poly},\Omega}+\delta_\Omega
		+|h|\bigl(p(p-1)M_{\mathrm{step}}^{p-2}V_\Omega+1\bigr)
		(\varepsilon_{\mathrm{poly}})_+\Bigr\}.
	\end{aligned}
\end{equation*}
Again, this is proven in Lemma~\ref{lem:pure-slow-propagation}. For their variation across the parameter cube, set
\begin{equation*}
	\begin{aligned}
		(r_\xi)_+
		 & =g\Bigl\{r_\xi+|h|(p(p-1)M_{\mathrm{step}}^{p-2}(r_Q)_++\rho)
		\bigl(V_\xi+(\varepsilon_{\mathrm{poly},\xi})_+\bigr)\Bigr\},
		 &                                                                  & \xi=A,K, \\
		(r_\Omega)_+
		 & =g\Bigl\{r_\Omega+|h|(p(p-1)M_{\mathrm{step}}^{p-2}(r_Q)_++\rho)
		\bigl(V_\Omega+(\varepsilon_{\mathrm{poly},\Omega})_+\bigr)
		+|h|(r_Q)_+\Bigr\}.
	\end{aligned}
\end{equation*}
The nonvanishing test for the step is
\begin{equation}\label{eq:pure-slow-step-nonvanishing}
	|p_q(0)|-\sum_{n\geq1}|[u^n]p_q|>\varepsilon.
\end{equation}
Here $[u^n]p_q$ is the coefficient of $u^n$ in $p_q$. Since $0\leq u\leq1$, the left-hand side bounds $|p_q(u)|$ from below throughout the step. Once the enclosure condition in the next lemma holds, every exact and finite profile lies within $\varepsilon$ of $P$ and therefore has $|q|>0$. This also ensures that the logarithm and ratios in the matching map are defined at $r_{\mathrm{m}}$.

The next lemma turns the residual and starting bounds into enclosures valid throughout one step. Its strict inequality ensures that the solutions stay inside the enclosure used to compute $g$.

\begin{lem}\label{lem:pure-slow-propagation}
	Assume the residual bounds \eqref{eq:pure-slow-step-defects},
	the starting bounds
	\eqref{eq:pure-slow-step-initial-errors}--
	\eqref{eq:pure-slow-step-initial-derivative-errors},
	and the enclosure bounds \eqref{eq:pure-slow-step-enclosure-bounds} defining
	$M_{\mathrm{step}}$ and $V_\xi$.
	If
	\begin{equation}\label{eq:pure-slow-step-enclosure-condition}
		(\varepsilon_{\mathrm{poly}})_++(r_Q)_++(\delta_{\mathrm{bd}})_+<\varepsilon,
	\end{equation}
	then the following bounds hold for every $u\in[0,1]$
	\begin{equation*}
		\begin{gathered}
			\|\widetilde{\mathbf Q}_{\vartheta_0}-P\|
			\leq(\varepsilon_{\mathrm{poly}})_+,
			\qquad
			\sup_{\vartheta\in D}\|\widetilde{\mathbf Q}_\vartheta
			-\widetilde{\mathbf Q}_{\vartheta_0}\|\leq (r_Q)_+,
			\qquad
			\sup_{\vartheta\in D}\|\mathbf Q_\vartheta
			-\widetilde{\mathbf Q}_\vartheta\|
			\leq (\delta_{\mathrm{bd}})_+,\\
			\|\left.\partial_\xi\widetilde{\mathbf Q}_\vartheta
			\right|_{\vartheta=\vartheta_0}-P_\xi\|
			\leq(\varepsilon_{\mathrm{poly},\xi})_+,\\
			\sup_{\vartheta\in D}\|\partial_\xi\widetilde{\mathbf Q}_\vartheta
			-\left.\partial_\xi\widetilde{\mathbf Q}_\vartheta\right|_{\vartheta=\vartheta_0}\|
			\leq(r_\xi)_+,
			\quad \xi\in\{A,\Omega,K\}.
		\end{gathered}
	\end{equation*}
	All exact and finite profile solutions involved continue to the endpoint. If, in addition, \eqref{eq:pure-slow-step-nonvanishing} holds, they are nonvanishing throughout the step.
\end{lem}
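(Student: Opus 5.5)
The proof is a bootstrap (continuity) argument combined with a Gr\"onwall estimate in the reparametrized variable $u\in[0,1]$. Let $u_\sharp\in(0,1]$ be the supremum of those $u$ such that on $[0,u]$ all the relevant solutions exist and stay within distance $\varepsilon$ of $P$. These solutions are $\widetilde{\mathbf Q}_{\vartheta_0}$ and, for every $\vartheta\in D$, the solutions $\widetilde{\mathbf Q}_\vartheta$ and $\mathbf Q_\vartheta$. At $u=0$ the starting bounds \eqref{eq:pure-slow-step-initial-errors} together with \eqref{eq:pure-slow-step-enclosure-condition} give strict containment, so $u_\sharp>0$. On $[0,u_\sharp)$ every state has $|q|\leq M_{\mathrm{step}}$. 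This permits the derivative bounds $\|DN(q)\|\leq pM_{\mathrm{step}}^{p-1}$ and $\|D^2N(q)\|\leq p(p-1)M_{\mathrm{step}}^{p-2}$ on segments between states, since the enclosure ball is convex.

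The key step is a one-sided Lipschitz bound for $h f_\Omega$ with constant $|h|L$. For differences $(\delta q,\delta v)$, we have to estimate $\operatorname{Re}\langle(\delta q,\delta v),Df_\Omega\,(\delta q,\delta v)\rangle$ in the Euclidean real inner product.
\begin{itemize}
\item The coupling terms $\delta v\cdot\overline{\delta q}$ and $(DN-\Omega-\frac{i}{p-1})\delta q\cdot\overline{\delta v}$ are bounded by $\frac12(1+|\Omega|+\frac1{p-1}+pM^{p-1})(|\delta q|^2+|\delta v|^2)$ using Young's inequality.
\item The term $-\frac{ir}{2}\delta v$ is skew and contributes nothing, which is why no factor of $r$ appears in $L$.
\item The damping term $-\frac{d-1}{r}\delta v$ is dissipative when $h>0$. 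When $h<0$ the sign flips and it contributes at most $(d-1)/r_{\min}$.
\end{itemize}
With this one-sided bound, the standard differential inequality for $\|x(u)\|$ with $x'=A(u)x+b(u)$ gives $\|x(u)\|\leq g(\|x(0)\|+\int_0^1\|b\|)$.

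We then apply this estimate to each error quantity in turn.
\begin{enumerate}
\item For $\widetilde{\mathbf Q}_{\vartheta_0}-P$, the forcing is the defect, and we obtain $(\varepsilon_{\mathrm{poly}})_+$.
\item For $\mathbf Q_\vartheta-\widetilde{\mathbf Q}_\vartheta$, there is no forcing, and we obtain $g\delta_{\mathrm{bd}}$.
\item For $\widetilde{\mathbf Q}_\vartheta-\widetilde{\mathbf Q}_{\vartheta_0}$, the extra forcing is $-(\Omega-\Omega_0)(0,q)$, of size at most $|h|\rho M_{\mathrm{step}}$. The change in $N$ is absorbed into $L$, and $L$ uses $\sup_D|\Omega|$.
\end{enumerate}
The triangle inequality and \eqref{eq:pure-slow-step-enclosure-condition} then show that all states remain strictly inside the $\varepsilon$-tube at $u_\sharp$. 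The a priori bounds and the blow-up alternative of Lemma~\ref{lem:profile-local-regularity} (away from $r=0$ the system is smooth) therefore give continuation past $u_\sharp$. Hence $u_\sharp=1$, and all the solutions continue to the endpoint.

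Next we treat the parameter derivatives, which are linear problems driven by the now-controlled base solutions. The quantity $\partial_\xi\widetilde{\mathbf Q}_{\vartheta_0}-P_\xi$ satisfies an equation with the same linear part, forced by two terms: the defect $\delta_\xi$, and $h[Df(\widetilde{\mathbf Q}_{\vartheta_0})-Df(P)]P_\xi$. By the $D^2N$ bound, the latter is at most $|h|p(p-1)M^{p-2}(\varepsilon_{\mathrm{poly}})_+V_\xi$. For $\xi=\Omega$ there is an additional forcing $(0,\widetilde q-p_q)$, bounded by $|h|(\varepsilon_{\mathrm{poly}})_+$. This reproduces the stated formulas. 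The variation across $D$ is handled the same way:
\begin{itemize}
\item the Jacobian difference contributes $p(p-1)M^{p-2}(r_Q)_+$;
\item the explicit $\Omega$-dependence of $Df$ contributes $\rho$;
\item both multiply $\|\partial_\xi\widetilde{\mathbf Q}_{\vartheta_0}\|\leq V_\xi+(\varepsilon_{\mathrm{poly},\xi})_+$;
\item for $\xi=\Omega$, the term $(0,q)$ contributes an extra $|h|(r_Q)_+$.
\end{itemize}

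Finally, nonvanishing follows from \eqref{eq:pure-slow-step-nonvanishing}. It gives $|p_q(u)|>\varepsilon$ on $[0,1]$, and every first component lies within $\varepsilon$ of $p_q$. I expect the main obstacle to be the sharp one-sided Lipschitz computation, in particular verifying that the $ir/2$ term is exactly skew and handling the sign of the damping term. The continuation bootstrap is routine by comparison.
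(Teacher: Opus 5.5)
Your proposal is correct and follows essentially the same route as the paper: a one-sided Lipschitz bound for $hDf_\Omega$ with constant $|h|L$ (the $ir/2$ term is skew, and the damping term only costs $(d-1)/r_{\min}$ when $h<0$), then a Gr\"onwall estimate for each error quantity with exactly the forcings you list, then a first-exit/continuation argument based on the strict enclosure condition, and finally the nonvanishing test. The only difference is presentational: you phrase the continuation as an explicit bootstrap on $u_\sharp$, which is cleanest if run separately for each fixed $\vartheta\in D$, since all constants are uniform in $\vartheta$.
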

\begin{proof}
	We first bound the growth of perturbations to the profile flow. For every $(\eta,\zeta)\in\mathbb C^2$, direct differentiation of \eqref{eq:pure-slow-first-order-system} gives
	\begin{equation*}
		D f_\Omega(r,(q,v))(\eta,\zeta)
		=\left(\zeta,DN(q)\eta-
		\left(\Omega+\frac{i}{p-1}\right)\eta
		-\left(\frac{d-1}{r}+\frac{ir}{2}\right)\zeta\right).
	\end{equation*}
	If $|q|\leq M_{\mathrm{step}}$, then $\|DN(q)\|\leq pM_{\mathrm{step}}^{p-1}$. Moreover, the term $-ir\zeta/2$ has zero real inner product with $\zeta$. The radial damping decreases the norm when $h>0$, whereas for $h<0$ its contribution is at most $(d-1)/r_{\min}$. Consequently,
	\begin{equation}\label{eq:pure-slow-secant-estimate}
		\begin{aligned}
			\operatorname{Re}\left\langle(\eta,\zeta),
			\frac{h}{|h|}D f_\Omega(r,(q,v))(\eta,\zeta)\right\rangle
			 & \leq
			\left(1+|\Omega|+\frac1{p-1}+pM_{\mathrm{step}}^{p-1}\right)
			|\eta||\zeta|                                    \\
			 & \quad+
			\frac{d-1}{r_{\min}}\mathbf 1_{\{h<0\}}|\zeta|^2 \\
			 & \leq L\bigl(|\eta|^2+|\zeta|^2\bigr).
		\end{aligned}
	\end{equation}

	We give the details for the polynomial error. Set
	\begin{equation*}
		\mathcal E(u)=\widetilde{\mathbf Q}_{\vartheta_0}(r(u))-P(u),
		\qquad
		\mathcal R(u)=P'(u)-h f_{\Omega_0}(r(u),P(u)).
	\end{equation*}
	The fundamental theorem of calculus applied to the vector field yields
	\begin{equation*}
		\mathcal E'(u)
		=h\int_0^1D f_{\Omega_0}
		\bigl(r(u),P(u)+\tau\mathcal E(u)\bigr)
		\mathcal E(u)\,d\tau-\mathcal R(u).
	\end{equation*}
	Thus \eqref{eq:pure-slow-secant-estimate} implies, wherever $\mathcal E(u)\ne0$,
	\begin{equation*}
		\frac{d}{du}\|\mathcal E(u)\|
		=\frac{\operatorname{Re}\langle\mathcal E(u),\mathcal E'(u)\rangle}
		{\|\mathcal E(u)\|}
		\leq |h|L\|\mathcal E(u)\|+\|\mathcal R(u)\|.
	\end{equation*}
	Multiplying by $e^{-|h|Lu}$ and using \eqref{eq:pure-slow-step-defects}, for $0\leq u\leq1$ we obtain
	\begin{equation*}
		\begin{aligned}
			\|\mathcal E(u)\|
			 & \leq e^{|h|Lu}\|\mathcal E(0)\|
			+\int_0^u e^{|h|L(u-t)}\|\mathcal R(t)\|\,dt       \\
			 & \leq e^{|h|Lu}\left(\varepsilon_{\mathrm{poly}}
			+\int_0^u\|\mathcal R(t)\|\,dt\right)
			\leq g\bigl(\varepsilon_{\mathrm{poly}}+\delta_0\bigr).
		\end{aligned}
	\end{equation*}
	This proves the first line of \eqref{eq:pure-slow-propagation-tests}. For $\widetilde{\mathbf Q}_\vartheta -\widetilde{\mathbf Q}_{\vartheta_0}$, the additional source $\bigl(0,-(\Omega-\Omega_0)(\widetilde{\mathbf Q}_{\vartheta_0})_1\bigr)$ has norm at most $\rho M_{\mathrm{step}}$, giving the second line. The exact and finite solutions at fixed parameter solve the same ODE, giving the third line.

	Subtracting the center variational equation from its polynomial approximation gives an additional source bounded by $p(p-1)M_{\mathrm{step}}^{p-2}V_\xi (\varepsilon_{\mathrm{poly}})_+$, together with $(\varepsilon_{\mathrm{poly}})_+$ for the frequency derivative, proving the bound for $(\varepsilon_{\mathrm{poly},\xi})_+$. For the variation across the parameter cube,
	\begin{equation*}
		\|D f_\Omega(r,\widetilde{\mathbf Q}_\vartheta)
		-D f_{\Omega_0}(r,\widetilde{\mathbf Q}_{\vartheta_0})\|
		\leq p(p-1)M_{\mathrm{step}}^{p-2}(r_Q)_++\rho.
	\end{equation*}
	Also $\|\left.\partial_\xi\widetilde{\mathbf Q}_\vartheta \right|_{\vartheta=\vartheta_0}\| \leq V_\xi+(\varepsilon_{\mathrm{poly},\xi})_+$ throughout the step. The difference of the frequency sources is bounded by $(r_Q)_+$ when $\xi=\Omega$ and vanishes when $\xi=A,K$. Subtracting the variational equations and applying Gronwall, including the additional source bounded by $|h|(r_Q)_+$ when $\xi=\Omega$, proves the last estimate.

	These estimates are initially valid up to a possible first exit from the enclosure. Their strict sum bound excludes such an exit. The vector field is smooth for $r>0$, and bounded solutions continue, proving existence through the step. Finally, \eqref{eq:pure-slow-step-nonvanishing} and the enclosure bound give a positive lower bound for the modulus.
\end{proof}

Lemma~\ref{lem:pure-slow-propagation} is applied successively on the mesh covering the inner and outer finite intervals. At each step the polynomial coefficients are exact numbers. Errors are added to the propagated bounds before the next step, and the enclosure condition is rechecked. The initial bounds and parameter variations are obtained by interval evaluation and differentiation of the finite formulas \eqref{eq:pure-slow-origin-recursion} and \eqref{eq:pure-slow-E-recursion}--\eqref{eq:pure-slow-ell-recursion}.

It remains to convert the endpoint enclosure into an error bound for the matching map. Suppose a finite endpoint satisfies $|q|\geq q_{\min}$ and $|v|\leq v_{\max}$, while the exact endpoint is within Euclidean distance $\delta_{\mathrm{bd}}<q_{\min}$. For the inner endpoints, the amplitude and slope bounds are
\begin{equation*}
	\begin{aligned}
		\left|\log|Q^{\mathrm{in}}|-\log|\widetilde Q^{\mathrm{in}}|\right|
		 & \leq \frac{\delta_{\mathrm{bd}}}{q_{\min}-\delta_{\mathrm{bd}}},                                     \\
		r_{\mathrm{m}}\left|\operatorname{Re}\left(
		\frac{(Q^{\mathrm{in}})'}{Q^{\mathrm{in}}}
		-\frac{(\widetilde Q^{\mathrm{in}})'}{\widetilde Q^{\mathrm{in}}}
		\right)\right|
		 & \leq \frac{r_{\mathrm{m}} \delta_{\mathrm{bd}}(1+v_{\max}/q_{\min})}{q_{\min}-\delta_{\mathrm{bd}}}, \\
		r_{\mathrm{m}}\left|\operatorname{Im}\left(
		\frac{(Q^{\mathrm{in}})'}{Q^{\mathrm{in}}}
		-\frac{(\widetilde Q^{\mathrm{in}})'}{\widetilde Q^{\mathrm{in}}}
		\right)\right|
		 & \leq \frac{r_{\mathrm{m}} \delta_{\mathrm{bd}}(1+v_{\max}/q_{\min})}{q_{\min}-\delta_{\mathrm{bd}}},
	\end{aligned}
	\qquad\text{at }r=r_{\mathrm{m}}.
\end{equation*}
The same inequalities hold for the outer endpoints with $\mathrm{in}$ replaced by $\mathrm{out}$. The first bound follows by integrating the derivative of $\log|q|$ along the segment between the endpoints. The second and third follow by subtracting the two ratios $v/q$. For $\sigma\in\{\mathrm{in},\mathrm{out}\}$, let $q_{\min}^\sigma,v_{\max}^\sigma,\delta_{\mathrm{bd}}^\sigma$ denote the corresponding endpoint bounds and define the vector
\begin{equation*}
	e^\sigma:= \left( \frac{\delta_{\mathrm{bd}}^\sigma}{q_{\min}^\sigma-\delta_{\mathrm{bd}}^\sigma},
	\frac{r_{\mathrm{m}} \delta_{\mathrm{bd}}^\sigma(1+v_{\max}^\sigma/q_{\min}^\sigma)}{q_{\min}^\sigma-\delta_{\mathrm{bd}}^\sigma},
	\dfrac{r_{\mathrm{m}} \delta_{\mathrm{bd}}^\sigma(1+v_{\max}^\sigma/q_{\min}^\sigma)}{q_{\min}^\sigma-\delta_{\mathrm{bd}}^\sigma}
	\right).
\end{equation*}
Thus, with $|B|$ denoting the entrywise absolute value of $B$, the choice
\begin{equation}\label{eq:pure-slow-matching-E}
	E:=\bigl\||B|(e^{\mathrm{in}}+e^{\mathrm{out}})\bigr\|_\infty
\end{equation}
gives a computable bound $\sup_D\|B(\mathcal M-\widetilde{\mathcal M})\|_\infty\leq E$. The next lemma states the computer-assisted bounds.

\begin{lem}[Computer-assisted bounds]\label{lem:pure-slow-computer-bounds}
	For the cube $D$ in \eqref{eq:pure-slow-parameter-box}, the following statements hold.
	\begin{enumerate}[label=\textup{(\roman*)}]
		\item Throughout $D$, we have $|\Omega|<\Omega_{\mathrm{bd}}$ and $K>0$. Moreover, the fixed-point conditions at the origin \eqref{eq:pure-slow-origin-tests} and in the exterior \eqref{eq:pure-slow-exterior-tests} hold uniformly, and the corresponding ball inclusions are strict.
		\item The bounds in \eqref{eq:pure-slow-propagation-tests} satisfy $(\varepsilon_{\mathrm{poly}})_++(r_Q)_++(\delta_{\mathrm{bd}})_+<\varepsilon$ on every step, with $\varepsilon=10^{-8}$, and \eqref{eq:pure-slow-step-nonvanishing} holds on every step. Thus the exact and finite inner solutions exist and are nonvanishing on $[r_{\mathrm{in}},r_{\mathrm{m}}]$, and the exact and finite outer solutions do so on $[r_{\mathrm{m}},r_{\mathrm{out}}]$, for all $\vartheta\in D$.
		\item The matrix $B$ is invertible and, for the bounds in Proposition~\ref{prop:pure-slow-validated-zero}, the quantities $Y,E,Z$ satisfy
		      \begin{equation*}
			      Y+E+Z\rho<\rho.
		      \end{equation*}
	\end{enumerate}
\end{lem}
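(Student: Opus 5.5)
The proof of Lemma~\ref{lem:pure-slow-computer-bounds} is a finite verification carried out in rigorous interval (ball) arithmetic with the Arb library. It is organized so that every check reduces to evaluating explicit finite formulas from the preceding lemmas on intervals enclosing the cube $D$. We fix, in the code, the radii $r_{\mathrm{in}},r_{\mathrm{m}},r_{\mathrm{out}}$ (with $r_{\mathrm{out}}\geq32$), the truncation orders $I,J$ (with $J\geq2$), the exterior constants $\chi$ and $\delta_{\mathrm{ext}}$, the meshes on $[r_{\mathrm{in}},r_{\mathrm{m}}]$ and $[r_{\mathrm{m}},r_{\mathrm{out}}]$, and the matrix $B$. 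All parameter-dependent quantities are evaluated with $\vartheta$ replaced by the ball $\vartheta_0+[-\rho,\rho]^3$, so each inequality certified by the code holds uniformly on $D$. The precision must be large, since $\rho$ is of order $2^{-220}$ to $2^{-320}$; we therefore work with several hundred to a thousand bits.

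For (i), we first check $|\Omega_0|+\rho<\Omega_{\mathrm{bd}}$ and $K_0-\rho>0$ directly. We then compute enclosures of the Taylor coefficients $c_n$ via \eqref{eq:pure-slow-origin-recursion} and of the residual coefficients $d_n$. From these we bound $H_0$, $L_0$, $\delta_{\mathrm{in}}$ and $\sup_{[0,r_{\mathrm{in}}]}|P_{\mathrm{in}}^{(I)}|$ (the latter via the triangle inequality on the coefficients), and verify \eqref{eq:pure-slow-origin-tests}. In the exterior, we compute $\ell_n$, $E_n$ and $H_n$ from \eqref{eq:pure-slow-E-recursion}--\eqref{eq:pure-slow-ell-recursion}, then $G_n$, $L_n$, $\mathcal B$ and $M$ from \eqref{eq:pure-slow-residual-majorant}. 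Next we compute $\mathfrak C_{\nu,r_{\mathrm{out}}}$ from the explicit formulas of Lemma~\ref{lem:pure-slow-inverse}, including $R(x)$, $\delta_x$, $D_S$, $D_F$ and $A_{r_{\mathrm{out}}}$, and finally verify the three strict inequalities \eqref{eq:pure-slow-exterior-tests}. The enclosures \eqref{eq:pure-slow-origin-enclosure} and \eqref{eq:pure-slow-exterior-enclosure} then supply the initial $\delta_{\mathrm{bd}}$ for the two shootings.

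For (ii), on each mesh step we compute, in floating point, polynomial approximants $P$ and $P_\xi$, for instance Taylor models of the ODE and variational equations. These are rounded to exact coefficients, and all bounds are then computed rigorously on top of them. The defects \eqref{eq:pure-slow-step-defects} are obtained by composing polynomials and bounding the $L^1$ norm of the resulting (polynomial plus interval remainder) integrand. The nonlinearity $N$ is evaluated through $q^{(p+1)/2}\overline q^{(p-1)/2}$ as a polynomial product. From the defects we form $M_{\mathrm{step}}$, $V_\xi$, $L$ and $g$, update all error quantities by \eqref{eq:pure-slow-propagation-tests} and the formulas that follow it, and check \eqref{eq:pure-slow-step-enclosure-condition} and \eqref{eq:pure-slow-step-nonvanishing}. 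Lemma~\ref{lem:pure-slow-propagation} then transfers all bounds to the next step.

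For (iii), at $r_{\mathrm{m}}$ the propagated data give enclosures of $\widetilde{\mathcal M}(\vartheta_0)$ and of $D\widetilde{\mathcal M}$ over $D$, computed via \eqref{eq:pure-slow-matching-derivative}. We take $B$ to be a rounded numerical inverse of $D\widetilde{\mathcal M}(\vartheta_0)$ and certify its invertibility, for example by checking $\|I-BA\|_\infty<1$ for an interval matrix $A$ enclosing $D\widetilde{\mathcal M}(\vartheta_0)$. We then compute $Y$, $Z$, and $E$ from \eqref{eq:pure-slow-matching-E} using $q_{\min}$ and $v_{\max}$ from the final step, and check $Y+E+Z\rho<\rho$.

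The main obstacle is (ii) combined with (iii): the shooting problem is extremely unstable. The Gronwall factors $g$ compound over many steps, and near $r_{\mathrm{m}}$ both the propagated errors and the parameter variations $r_Q$ and $r_\xi$ must remain far below $\rho$. Otherwise $E$ and $Z\rho$ cannot both fit under $\rho$. The first remedy is to shrink $\rho$, raise the precision, and use high-degree polynomials on short steps so that $\delta_0$ is negligible. Failing that, the choice of $r_{\mathrm{m}}$ should be balanced so that neither the inner nor the outer growth factor dominates.
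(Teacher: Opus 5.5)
Your proposal is correct and follows essentially the same route as the paper. The paper's proof is likewise a description of a ball-arithmetic verification (python-flint/Arb at 896 bits). It checks the origin and exterior contraction conditions from the explicit recursions and constants for (i). For (ii) it builds truncated Taylor polynomial approximants at $\vartheta_0$ with rigorous defect bounds and applies Lemma~\ref{lem:pure-slow-propagation} step by step. For (iii) it takes $B$ as the midpoint of an enclosure of the inverse Jacobian and then checks $Y+E+Z\rho<\rho$.

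One correction to your closing heuristics: $r_Q$ and $r_\xi$ are not required to be far below $\rho$. They scale like $\rho$ times the very large parameter sensitivity, and they only need to stay below $\varepsilon$ and keep $Z<1$. What must be small compared with $\rho$ is $|B|$ applied to $\delta_{\mathrm{bd}}$ and $\varepsilon_{\mathrm{poly}}$.

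Also, your separate invertibility check for $B$ is redundant. Since $Y+E+Z\rho<\rho$ already forces $Z<1$, the product $BD\widetilde{\mathcal M}$ is invertible, and hence so is $B$.
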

\begin{proof}
	All the code used for the computer-assisted argument is available in the GitHub repository~\cite{GITHUB}.

	The first step is to generate approximate profiles. A broad exploration in the parameters $(d, p, A, \Omega, K)$ was carried out to look for solutions of the ODE \eqref{eq:complex-profile-radial}. Once several candidates had been found, the parameters were refined to improve precision. This is done by \path{generate_candidate.py} using numerical shooting and continuation. The profile ODE is easier to solve numerically for larger values of $p$, so an initial solution is first found in that regime and then continued to the desired $d$ and down to the final value of $p$, using each solution to initialize the next. The resulting candidate is refined to get low residuals using multiprecision integration with \texttt{mpmath}~\cite{mpmath} and Newton corrections. The result is an approximation $(A_0,\Omega_0,K_0)$ of the desired parameters.

	The rest of the code uses the library \texttt{python-flint} to implement interval arithmetic; see \cite{Johansson2017}. This is a Python module wrapping FLINT and Arb. A precision of 896 bits is chosen to ensure that error enclosures are small enough for the proof to close.

	The script \path{origin.py} evaluates the recurrence \eqref{eq:pure-slow-origin-recursion}, computes the residual estimate \eqref{eq:pure-slow-origin-residual-bound}, and checks the conditions for the contraction in \eqref{eq:pure-slow-origin-tests}. The file \path{exterior.py} evaluates the recurrence \eqref{eq:pure-slow-E-recursion}--\eqref{eq:pure-slow-ell-recursion}, computes the linear-mode bounds \eqref{eq:pure-slow-linear-mode-bounds}, computes the residual estimate \eqref{eq:pure-slow-residual-bound}, checks the restriction $K>0$, and all three conditions for the contraction in \eqref{eq:pure-slow-exterior-tests}. These computations give the initial Cauchy-data errors at $r_{\mathrm{in}}$ and $r_{\mathrm{out}}$, respectively, and prove item~\textup{(i)}.

	The script \path{propagation.py} starts from either $r_{\mathrm{in}}$ or $r_{\mathrm{out}}$. It constructs the coefficients of $P,P_A,P_\Omega,P_K$ successively from the formal Taylor identities
	\begin{equation*}
		\begin{aligned}
			P'        & =h f_{\Omega_0}(r(u),P),
			          & P_A'                                          & =hD f_{\Omega_0}(r(u),P)P_A, \\
			P_\Omega' & =h\{D f_{\Omega_0}(r(u),P)P_\Omega-(0,p_q)\},
			          & P_K'                                          & =hD f_{\Omega_0}(r(u),P)P_K.
		\end{aligned}
	\end{equation*}
	At the chosen Taylor degree the series are truncated and their coefficients are replaced by midpoint values. The program then substitutes the resulting polynomials into the four left-hand sides in \eqref{eq:pure-slow-step-defects} and obtains the bounds $\delta_0,\delta_A,\delta_\Omega,\delta_K$. It then applies Lemma~\ref{lem:pure-slow-propagation}, step by step, outward from $r_{\mathrm{in}}$ and inward from $r_{\mathrm{out}}$ to $r_{\mathrm{m}}$. At every endpoint, interval evaluation and errors are added to the bounds for the next step. On every step the program verifies \eqref{eq:pure-slow-step-enclosure-condition} with $\varepsilon=10^{-8}$ and the lower bound \eqref{eq:pure-slow-step-nonvanishing}. Thus all the hypotheses asserted in item~\textup{(ii)} are verified.

	Finally, \path{matching.py} evaluates $\widetilde{\mathcal M}$ from \eqref{eq:pure-slow-matching-map} and an interval enclosure of $D\widetilde{\mathcal M}$ using \eqref{eq:pure-slow-matching-derivative}. It takes $B$ to be the midpoint of an interval enclosure of the inverse of the Jacobian. Then it computes $Y$ and $Z$ directly and evaluates $E$ from \eqref{eq:pure-slow-matching-E}. The final strict check is precisely $Y+E+Z\rho<\rho$, proving item~\textup{(iii)}.

	The script \path{prove.py} runs all the above steps in order, and saves the results indicating whether each condition was verified.
\end{proof}

\begin{proof}[Proof of Theorem~\ref{thm:pure-slow-existence}]
	The bounds in Lemma~\ref{lem:pure-slow-computer-bounds} are used at several points in the argument. Item~\textup{(i)} verifies the hypotheses of Lemma~\ref{lem:origin-contraction} and of Proposition~\ref{prop:pure-slow-exterior-certificate}. Item~\textup{(ii)} provides the hypotheses of Lemma~\ref{lem:pure-slow-propagation} and the nonvanishing needed for the finite inner and outer solutions. Item~\textup{(iii)} supplies the three quantitative hypotheses of Proposition~\ref{prop:pure-slow-validated-zero}.

	It follows from the first two items and the cited results that the inner and outer solutions, together with their finite approximations, are defined up to the matching point. Lemma~\ref{lem:profile-local-regularity} and Corollary~\ref{cor:pure-slow-exterior-family} show that $\mathcal M$ is defined and continuous throughout $D$, while the corresponding ODE dependence shows that $\widetilde{\mathcal M}$ is differentiable there. The third item therefore allows us to apply Proposition~\ref{prop:pure-slow-validated-zero}, which gives a zero of $\mathcal M$ in $D$. Joining the inner and outer solutions by the constant phase described above produces a smooth global profile. It is regular at zero and has $C_f=0$ and $|C_s|=K^{1/(p-1)}>0$. Nonvanishing follows from identity \eqref{eq:profile-current}. Corollary~\ref{cor:pure-slow-full-expansion} gives the full slow expansion.
\end{proof}

\section{Self-similar blow-up}\label{sec:self-similar-blowup}

Fix the pure slow profile of Theorem~\ref{thm:pure-slow-existence}, and write
\begin{equation*}
	A=A_*,\qquad \Omega=\Omega_*,\qquad Q=Q_{A,\Omega}.
\end{equation*}
All functions and spaces in this section are radial, and constants may depend on the fixed profile. The construction below uses a finite-codimension strategy as in Li~\cite{Li2023}, together with the stable/unstable semigroup framework of~\cite{BuckmasterCaoLaboraGomezSerrano2025}. The evolution is studied around the stationary profile, and localization is implemented by a direct finite-dimensional correction to obtain a finite-energy solution.

In the coordinates of \eqref{eq:coordinates}, write
\begin{equation*}
	u(t,x)=(T-t)^{-\frac{1}{p-1}}e^{-i\Omega\tau}V(\tau,y),\qquad
	\tau=\log\frac{T}{T-t},\qquad y=\frac{x}{\sqrt{T-t}}.
\end{equation*}
The rescaled equation is
\begin{equation}\label{eq:complex-rescaled-evolution}
	\partial_\tau V=\mathcal A_0V-iN(V),\qquad
	\mathcal A_0=i\Delta-\frac12 y\cdot\nabla_y-\frac{1}{p-1}+i\Omega.
\end{equation}
The profile satisfies $\mathcal A_0Q=iN(Q)$. Hence $V=Q+v$ obeys
\begin{equation}\label{eq:exact-perturbation-system}
	\partial_\tau v=\mathcal A_Qv+\mathcal F(v),
\end{equation}
Set
\begin{equation}\label{eq:linearized-coefficients}
	V_{\mathrm{diag}}=\frac{p+1}{2}|Q|^{p-1},\qquad
	V_{\mathrm{off}}=\frac{p-1}{2}|Q|^{p-3}Q^2.
\end{equation}
Then the linearized operator $\mathcal{A}_Q$ and the nonlinear remainder $\mathcal F$ are
\begin{equation*}
	\begin{aligned}
		\mathcal A_Qh & =\mathcal A_0h-iV_{\mathrm{diag}}h-iV_{\mathrm{off}}\overline h, \\
		\mathcal F(v) & =-i\bigl[N(Q+v)-N(Q)
		-V_{\mathrm{diag}}v-V_{\mathrm{off}}\overline v\bigr].
	\end{aligned}
\end{equation*}
We first translate the asymptotics and nonvanishing of $Q$ into global pointwise bounds for the profile and its derivatives. These bounds place $Q$ in the perturbation spaces used below and ensure that cutting off its tail produces a small perturbation. They also show that the coefficients of the linearized operator decay quadratically, with an additional power of decay for each derivative, as needed for the subsequent spectral estimates.
\begin{lem}
	\label{lem:slow-profile-symbol-bounds}
	There are constants $c_*>0$ and $C_{Q,j}$ such that
	\begin{equation}\label{eq:slow-profile-symbol-bounds}
		|Q(y)|\geq c_*\langle y\rangle^{-\frac{2}{p-1}},\qquad
		|\partial^\nu Q(y)|\leq C_{Q,j}\langle y\rangle^{-\frac{2}{p-1}-j},
		\qquad |\nu|=j\geq0.
	\end{equation}
	In particular,
	\begin{equation}\label{eq:linearized-coefficient-derivative-bounds}
		|\partial^\nu V_{\mathrm{diag}}(y)|+|\partial^\nu V_{\mathrm{off}}(y)|
		\leq C_{Q,j}\langle y\rangle^{-2-j}.
	\end{equation}
\end{lem}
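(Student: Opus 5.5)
We split $\mathbb R^d$ into a compact ball $|y|\leq R_0$ and the exterior $|y|\geq R_0$, where $R_0\geq r_{\mathrm{out}}$ is large. The plan is to use smoothness and nonvanishing on the ball, and the full slow expansion of Corollary~\ref{cor:pure-slow-full-expansion} on the exterior.

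\emph{Compact region.} By Theorem~\ref{thm:pure-slow-existence}, $Q$ is a smooth radial function on $\mathbb R^d$ that never vanishes. Continuity on the compact ball therefore gives $|Q|\geq c>0$ there, and all derivatives $\partial^\nu Q$ are bounded there. Since $\langle y\rangle$ is bounded above and below on the ball, \eqref{eq:slow-profile-symbol-bounds} holds on $|y|\leq R_0$ after adjusting constants.

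\emph{Exterior, lower bound and first derivative.} Corollary~\ref{cor:pure-slow-full-expansion} with $J=2$ gives $Q=P_{\mathrm{out}}^{(2)}+O(r^{-2/(p-1)-6})$, up to a unit phase. We also have
\begin{equation*}
	|P_{\mathrm{out}}^{(2)}|=K^{1/(p-1)}r^{-2/(p-1)}e^{\operatorname{Re}\Phi_2(r^{-2})}\geq \tfrac12 K^{1/(p-1)}r^{-2/(p-1)}
\end{equation*}
for $r$ large. Together these give the lower bound for large $r$. The same corollary bounds $Q'$ by $Cr^{-2/(p-1)-1}$.

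\emph{Exterior, higher radial derivatives.} We proceed by induction on $k$, using the ODE \eqref{eq:complex-profile-radial} solved for $Q''$:
\begin{equation*}
	Q''=-\tfrac{d-1}{r}Q'+N(Q)-i\mu Q-\tfrac{ir}{2}Q'.
\end{equation*}
This is where the main difficulty lies. The terms $i\mu Q$ and $\tfrac{ir}{2}Q'$ are individually only of order $r^{-2/(p-1)}$, so a naive estimate loses two powers of $r$ at each step. The remedy is the cancellation built into the slow mode: the combination $i\mu Q+\tfrac{ir}{2}Q'=\tfrac{i}{2}r^{1-2\mu}(r^{2\mu}Q)'$ carries the decay, because $r^{2\mu}Q$ has an asymptotic expansion in powers of $r^{-2}$. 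I would therefore work with $w(r)=r^{2\mu}Q(r)$, which satisfies $w=K^{1/(p-1)}e^{\Phi_J(r^{-2})}+O(r^{-2J-2})$. The claim to prove by induction is $|w^{(k)}|\leq C_kr^{-2-k}$ for $k\geq1$. Rewriting the ODE in terms of $w$ gives
\begin{equation*}
	\tfrac{ir}{2}w'=-w''-\Bigl(\tfrac{d-1}{r}-\tfrac{4\mu}{r}\Bigr)w'-\tfrac{c}{r^2}w+|w|^{p-1}r^{-2}w,
\end{equation*}
with explicit constant $c$; this is solved for $w'$. Differentiating it $k$ times expresses $w^{(k+1)}$ in terms of $r^{-1}w^{(k+2)}$ and lower-order terms. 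Because it involves the higher derivative $w^{(k+2)}$, it is not a closed recursion. To avoid this, I would instead differentiate the correction: write $w=w_J+e$, where $w_J=K^{1/(p-1)}e^{\Phi_J}$ is explicit and satisfies the symbol bounds trivially. The remainder $e$ solves a linear ODE with a forcing that decays to high order, and it can be represented through the right inverse $\mathcal L^{-1}_{\Omega,\infty}$ of Lemma~\ref{lem:pure-slow-inverse}. Differentiating the formula $S_\Omega I_F-F_\Omega I_S$ costs at most one power of $r$ per derivative (the fast factor $e^{-ir^2/4}$ produces factors of $r$). Taking $J$ large relative to $k$ absorbs this loss, so $e^{(k)}=O(r^{-2/(p-1)-2J-2+k})$ is eventually negligible. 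Converting back to $Q=r^{-2\mu}w$ then gives the bounds on radial derivatives.

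\emph{Cartesian derivatives and coefficient bounds.} For a radial function $f(|y|)$, the Cartesian derivative $\partial^\nu f$ of order $j$ is a sum of terms $f^{(m)}(r)$ times homogeneous functions of degree $m-j$, for $1\leq m\leq j$. On the exterior these are all bounded by $\langle y\rangle^{-2/(p-1)-j}$. For \eqref{eq:linearized-coefficient-derivative-bounds}, note that $|Q|\geq c_*\langle y\rangle^{-2/(p-1)}>0$ everywhere, so $V_{\mathrm{diag}}$ and $V_{\mathrm{off}}$ are smooth functions of $(Q,\overline Q)$ that are homogeneous of degree $p-1$ away from $0$. Applying Fa\`a di Bruno with the bounds on $\partial^\nu Q$ and the lower bound on $|Q|$, each derivative of order $j$ is bounded by $\langle y\rangle^{-2-j}$.
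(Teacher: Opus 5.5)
Your proof is correct and follows the same outline as the paper. The lower bound comes from nonvanishing plus $C_s\neq0$. You then split $Q=P_{\mathrm{out}}^{(J)}+(\text{remainder})$, where the explicit part is trivially of symbol type, accept a loss of one power of $r$ per derivative on the remainder, and take $J$ large to absorb it.

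The difference is in how the remainder is differentiated. The paper works with its remainder $w_J:=Q-P_{\mathrm{out}}^{(J)}$ (not your $w_J$) and its ODE solved for the top derivative,
\[
w_J''=-\Bigl(\frac{d-1}{r}+\frac{ir}{2}\Bigr)w_J'-\Bigl(\Omega+\frac{i}{p-1}\Bigr)w_J+N(Q)-N(P_{\mathrm{out}}^{(J)})-r_J,
\]
and differentiates it repeatedly. This recursion is closed, and the term $\tfrac{ir}{2}w_J'$ costs exactly one power per step. That gives $w_J^{(j)}=O(r^{-2/(p-1)-2J-4+j})$, which is enough once $J\geq j-2$.

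The obstruction that made you abandon the ODE matters only for $Q$ or $r^{2\mu}Q$ themselves. Those obstructions were two lost powers per step when solving for $Q''$, and a non-closed recursion when solving for $w'$. For a remainder that decays to order $2J+2$, the naive loss is harmless.

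Your variation-of-parameters route gives the same rate but needs more input:
\begin{enumerate}
\item bounds on $S_\Omega^{(k)}$ and $F_\Omega^{(k)}$ beyond the first-order bounds of Lemma~\ref{lem:pure-slow-inverse};
\item the identification of the remainder with $\mathcal L^{-1}_{\Omega,\infty}f$, as in the uniqueness step of Corollary~\ref{cor:pure-slow-full-expansion};
\item an induction on the derivatives of $f=N(Q)-N(P_{\mathrm{out}}^{(J)})-\mathcal R_{\mathrm{out}}$, which enter from the second derivative onward and involve lower derivatives of $Q$.
\end{enumerate}
All of this is available, so the route closes, but it offers no advantage over the direct ODE argument.

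Two minor points. First, the bound $O(r^{-2/(p-1)-2J-2+k})$ you state holds for $r^{-2\mu}e=Q-P_{\mathrm{out}}^{(J)}$, not for your $e=w-w_J$, which decays like $r^{-2J-2}$. Second, since $p$ is odd, $V_{\mathrm{diag}}$ and $V_{\mathrm{off}}$ are polynomials in $Q,\overline Q$. So \eqref{eq:linearized-coefficient-derivative-bounds} follows from the product rule without the lower bound on $|Q|$, though your Fa\`a di Bruno argument is also valid.
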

\begin{proof}
	Nonvanishing and the nonzero leading slow coefficient give the lower bound. Write $r=|y|$ and take the approximation $P_{\mathrm{out}}^{(J)}$ from Corollary~\ref{cor:pure-slow-full-expansion}, with the phase of $Q$, and put $w_J=Q-P_{\mathrm{out}}^{(J)}$. Its equation is
	\begin{equation*}
		\begin{aligned}
			w_J''={} & -\left(\frac{d-1}{r}+\frac{ir}{2}\right)w_J'
			-\left(\Omega+\frac{i}{p-1}\right)w_J                   \\
			         & +N(Q)-N(P_{\mathrm{out}}^{(J)})-r_J,         \\
			r_J={}   & L_\Omega P_{\mathrm{out}}^{(J)}
			-N(P_{\mathrm{out}}^{(J)}).
		\end{aligned}
	\end{equation*}
	The explicit residual satisfies $r_J^{(k)}=O(r^{-2/(p-1)-2J-2-k})$. Starting with the bounds for $w_J,w_J'$ in that corollary, differentiation of this equation gives $w_J^{(j)}=O(r^{-2/(p-1)-2J-4+j})$ for each fixed $j\geq1$, provided $J\geq j-2$. Increasing $J$ proves the radial derivative bounds; smoothness at zero and the formulas for Cartesian derivatives give \eqref{eq:slow-profile-symbol-bounds}. The coefficient bounds follow by the product rule.
\end{proof}

\subsection{Functional setting}\label{subsec:stability-spaces}

Recall that $s_c=d/2-2/(p-1)$. For $s_c<s<d/2$, let
\begin{equation*}
	\mathcal X_s=\dot H^s_{\mathrm{rad}}(\mathbb R^d;\mathbb C)
	\cap\dot H^{k_\star}_{\mathrm{rad}}(\mathbb R^d;\mathbb C),
	\qquad
	\|h\|_{\mathcal X_s}^2
	=\||D|^s h\|_2^2+\||D|^{k_\star} h\|_2^2,
\end{equation*}
where $k_\star\in \mathbb{N}$ satisfies
\begin{equation*}
	k_\star\ge3,\qquad
	k_\star>\frac d2+2\|V_{\mathrm{off}}\|_\infty.
\end{equation*}
This ensures $(s_c-k_\star)/2+\|V_{\mathrm{off}}\|_\infty\leq(s_c-s)/2$.

We use its underlying real Hilbert space for the scalar perturbation equation, since $\mathcal A_Q$ is real-linear. The $\dot H^s$ representative is fixed by the embedding into $L^{2d/(d-2s)}$. Equivalently, it is the completion of radial $C_c^\infty$ in the displayed norm. We record the details of the interpolation estimate that will be used below. For $h\in C_c^\infty$ and $R>0$, Fourier inversion and Cauchy--Schwarz give
\begin{equation} \label{eq:Linfty-embedding-proof}
	\begin{aligned}
		\|h\|_{L^\infty} \lesssim \|\widehat h\|_1
		 & \leq \int_{|\xi|\leq R}|\widehat h(\xi)|\,d\xi
		+\int_{|\xi|>R}|\widehat h(\xi)|\,d\xi            \\
		 & \leq
		\left(\int_{|\xi|\leq R}|\xi|^{-2s}\,d\xi\right)^{1/2}
		\|h\|_{\dot H^s}
		+\left(\int_{|\xi|>R}|\xi|^{-2k_{\star}}\,d\xi\right)^{1/2}
		\|h\|_{\dot H^{k_{\star}}}                        \\
		 & \lesssim_s R^{\frac d2-s}\|h\|_{\dot H^s}
		+R^{\frac d2-k_{\star}}\|h\|_{\dot H^{k_\star}}.
	\end{aligned}
\end{equation}
Choosing $R=(\|h\|_{\dot H^{k_{\star}}}/\|h\|_{\dot H^s})^{1/(k_{\star}-s)}$ balances the two terms and yields
\begin{equation}\label{eq:stability-Linfty-embedding}
	\|h\|_\infty
	\lesssim_s \|h\|_{\dot H^s}^{\frac{k_{\star}-\frac{d}{2}}{k_{\star}-s}}
	\|h\|_{\dot H^{k_{\star}}}^{\frac{\frac{d}{2}-s}{k_{\star}-s}}
	\lesssim_s\|h\|_{\mathcal X_s}.
\end{equation}
Here we used the inequality $A^{\theta}B^{1-\theta} \leq \theta A + (1-\theta)B$.

\begin{lem}\label{lem:profile-stability-space}
	For every $s_c<s<d/2$, the profile $Q$ belongs to $\mathcal X_s$.
\end{lem}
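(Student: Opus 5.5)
We must show $Q\in\dot H^s\cap\dot H^{k_\star}$ for $s_c<s<d/2$, where $Q$ is smooth, bounded, and satisfies the symbol bounds \eqref{eq:slow-profile-symbol-bounds}: $|\partial^\nu Q(y)|\lesssim\langle y\rangle^{-2/(p-1)-j}$ for $|\nu|=j$. Since $Q\notin L^2$, the $\dot H^s$ norm has to be interpreted through the completion of radial $C_c^\infty$ (equivalently, via the $L^{2d/(d-2s)}$ representative). The plan is therefore to approximate $Q$ by the truncations $\chi_RQ$, where $\chi_R=\chi(\cdot/R)$ with $\chi\equiv1$ on $B_1$ and supported in $B_2$, and to show that $\chi_RQ$ is Cauchy in both seminorms. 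Each $\chi_RQ$ is radial, smooth and compactly supported. Moreover $\chi_RQ\to Q$ locally uniformly, so the limit in the completion is identified with $Q$; this also gives convergence in $L^{2d/(d-2s)}$, because $Q\in L^{2d/(d-2s)}$ by the decay $\langle y\rangle^{-2/(p-1)}$ and the fact that $\frac{2}{p-1}\cdot\frac{2d}{d-2s}>d$, which is equivalent to $s>s_c$.

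\textbf{Integer orders.} For an integer $k>s_c$, and in particular for $k=k_\star$, the product rule gives $|\partial^\nu(Q-\chi_RQ)|\lesssim \langle y\rangle^{-2/(p-1)-k}\mathbf 1_{|y|\ge R}$ when $|\nu|=k$. Here derivatives landing on the cutoff cost $R^{-1}\sim|y|^{-1}$ on the annulus, so they obey the same bound. Because $2(\frac{2}{p-1}+k)>d$, i.e.\ $k>s_c$, these functions lie in $L^2$ with norm $\lesssim R^{s_c-k}\to0$. This shows that $\chi_RQ$ converges in $\dot H^k$ and that $\|\chi_RQ-\chi_{R'}Q\|_{\dot H^k}\to0$. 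The same estimate holds for every integer $k>s_c$, including $k_\star$.

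\textbf{Fractional order $s$.} Pick integers $k_0<s<k_1$ with $k_0\ge0$. There are two cases.
\begin{enumerate}
\item If $k_0>s_c$, interpolation $\|f\|_{\dot H^s}\le\|f\|_{\dot H^{k_0}}^{\theta}\|f\|_{\dot H^{k_1}}^{1-\theta}$ applied to $f=\chi_RQ-\chi_{R'}Q$ finishes the argument.
\item If $k_0<s_c$, apply the dyadic scaling estimate to the annular pieces directly. Decompose $Q=\chi_1Q+\sum_{j\ge0}\psi_jQ$ with $\psi_j=\chi_{2^{j+1}}-\chi_{2^j}$ supported where $|y|\sim2^j$. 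For each piece, the derivative bounds give $\|\psi_jQ\|_{\dot H^{k}}\lesssim 2^{j(s_c-k)}$ for every integer $k\ge0$ (with $k=0$ the $L^2$ norm). Interpolating between $k_0$ and $k_1$ yields $\|\psi_jQ\|_{\dot H^s}\lesssim 2^{j(s_c-s)}$, which is summable since $s>s_c$.
\end{enumerate}
Case 2 covers case 1 as well, so I would use it uniformly; it gives an absolutely convergent series of $C_c^\infty$ radial functions in both $\dot H^s$ and $\dot H^{k_\star}$, with partial sums $\chi_{2^J}Q$.

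\textbf{Main obstacle.} The main care is the identification of the limit with $Q$ in the completion, given that $Q$ itself is not in $L^2$. I would handle it through the embedding $\dot H^s\hookrightarrow L^{2d/(d-2s)}$ and pointwise convergence of the partial sums. The rest of the argument is routine scaling.
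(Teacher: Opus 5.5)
Your proposal is correct and follows essentially the paper's argument. The paper also splits off $\chi_RQ$ and decomposes the tail into dyadic annular pieces $\eta(y/(2^jR))Q$, whose partial sums are $\chi_{2^JR}Q$. It then uses the symbol bounds and scaling to get $\|Q_{j,R}\|_{\dot H^k}\lesssim(2^jR)^{s_c-k}$ for $k=s,k_\star$, which is summable because $k>s_c$. The only difference is cosmetic: the paper rescales each piece to a fixed annulus and bounds it uniformly in $H^{k_\star}$, which handles the fractional order $s$ directly. You instead compute the integer orders and interpolate.
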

\begin{proof}
	Fix a smooth radial cutoff $\chi$, equal to one on $|y|\leq1$ and zero on $|y|\geq2$, and set $\chi_R(y)=\chi(y/R)$ and $\eta(y)=\chi(y)-\chi(2y)$. For every $R\geq1$,
	\begin{equation*}
		Q=\chi_RQ+\sum_{j\geq1}Q_{j,R},\qquad
		Q_{j,R}(y)=\eta(y/(2^jR))Q(y).
	\end{equation*}
	For $L=2^jR$, set $g_{j,R}(Y)=L^{2/(p-1)}\eta(Y)Q(LY)$. These functions are supported in $1/2\leq|Y|\leq2$, and \eqref{eq:slow-profile-symbol-bounds} gives $\|g_{j,R}\|_{H^{k_{\star}}}\leq C_Q$, uniformly in $j,R$. Scaling in dimension $d$ yields, for $k=s,k_{\star}$,
	\begin{equation*}
		\|Q_{j,R}\|_{\dot H^k}
		=L^{s_c-k}\|g_{j,R}\|_{\dot H^k}
		\leq C_Q(2^jR)^{s_c-k}.
	\end{equation*}
	Since $k>s_c$, the series converges in $\mathcal X_s$ and locally to $(1-\chi_R)Q$, with
	\begin{equation}\label{eq:initial-cutoff-smallness}
		\|(1-\chi_R)Q\|_{\dot H^k}
		\leq\sum_{j\geq1}\|Q_{j,R}\|_{\dot H^k}
		\lesssim_{Q,s}R^{s_c-k},\qquad k=s,k_{\star}.
	\end{equation}
	As $\chi_RQ\in C_c^\infty$, this also proves $Q\in\mathcal X_s$.
\end{proof}

Estimate~\eqref{eq:Linfty-embedding-proof} gives $\|\widehat h\|_1\lesssim_s\|h\|_{\mathcal X_s}$. Hence, for $k=s,k_\star$, the inequality $|\xi|^k\lesssim_k|\eta|^k+|\xi-\eta|^k$ and Young's convolution inequality give
\begin{equation*}
	\|fg\|_{\dot H^k}
	\lesssim_k \|f\|_{\dot H^k}\|\widehat g\|_1
	+\|\widehat f\|_1\|g\|_{\dot H^k}.
\end{equation*}
Thus $\mathcal X_s$ is an algebra:
\begin{equation}\label{eq:algebra-estimate}
	\|fg\|_{\mathcal X_s}
	\leq C_s\|f\|_{\mathcal X_s}\|g\|_{\mathcal X_s}.
\end{equation}
Since $p$ is odd, $\mathcal F(v)$ is a polynomial in $Q,\overline Q,v,\overline v$ whose terms have degree at least two in $v,\overline v$. Lemma~\ref{lem:profile-stability-space} and the algebra estimate therefore give
\begin{equation}\label{eq:stability-nonlinear-estimate}
	\|\mathcal F(v)-\mathcal F(w)\|_{\mathcal X_s}
	\leq C_{Q,s}
	\bigl(\|v\|_{\mathcal X_s}+\|w\|_{\mathcal X_s}+\|v\|^{p}_{\mathcal X_s}+\|w\|^{p}_{\mathcal X_s}\bigr)
	\|v-w\|_{\mathcal X_s}.
\end{equation}
\begin{lem}\label{lem:linearized-flow}
	$\mathcal A_{Q}$ generates a strongly continuous group $S_{Q}(\tau)$. Moreover, for any $v_0\in \mathcal X_s$, there exists $T>0$ such that a unique mild solution in $C([0,T];\mathcal X_s)$ solves \eqref{eq:exact-perturbation-system} with initial data $v_0$.
\end{lem}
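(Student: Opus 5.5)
Split $\mathcal A_Q=\mathcal A_0+\mathcal P$, where $\mathcal Ph=-iV_{\mathrm{diag}}h-iV_{\mathrm{off}}\overline h$. First show that $\mathcal A_0$ generates a strongly continuous group on $\mathcal X_s$ by writing it down explicitly. Then treat $\mathcal P$ as a bounded real-linear perturbation. Finally obtain the mild solution by a Duhamel contraction using \eqref{eq:stability-nonlinear-estimate}.

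For $\mathcal A_0$, undo the self-similar change of variables. If $w$ solves the free Schr\"odinger equation $i\partial_tw+\Delta w=0$, then
\[
V(\tau,y)=e^{-\tau/(p-1)}e^{i\Omega\tau}\,w\bigl(\tfrac{1-e^{-\tau}}{1},\,e^{-\tau/2}y\bigr)
\]
(with the normalization $T=1$) solves $\partial_\tau V=\mathcal A_0V$. Equivalently, $\mathcal A_0$ is the sum of the commuting generators $i\Delta$ and $-\frac12y\cdot\nabla-\frac1{p-1}+i\Omega$. So I would define
\[
S_0(\tau)h=e^{i\Omega\tau}e^{-\tau/(p-1)}\bigl(e^{i(e^{\tau}-1)\Delta}h\bigr)(e^{-\tau/2}\cdot),
\]
after checking the ordering by differentiating at $\tau=0$; this formula makes sense for all $\tau\in\mathbb R$. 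On the Fourier side, the free flow is unitary on every $\dot H^k$, and the dilation multiplies the $\dot H^k$ norm by $e^{\tau(d/2-k)/2}$. Hence
\[
\|S_0(\tau)h\|_{\dot H^k}=e^{\tau(s_c-k)/2}\|h\|_{\dot H^k},\qquad k=s,k_\star .
\]
Thus $S_0$ is a group of bounded operators on $\mathcal X_s$, with exponential bounds $e^{C|\tau|}$. Strong continuity follows by density of radial $C_c^\infty$ and continuity of each factor. Radiality is preserved, and identifying the generator on $C_c^\infty$, a core, gives $\mathcal A_0$.

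Next I show $\mathcal P$ is bounded on $\mathcal X_s$. Unlike $Q$, the coefficients $V_{\mathrm{diag}},V_{\mathrm{off}}$ are not in $\mathcal X_s$; they decay like $\langle y\rangle^{-2}$, which is in $L^2$ only when $d<4$. So the algebra estimate does not apply directly, and I would argue through \eqref{eq:linearized-coefficient-derivative-bounds} instead. The $\dot H^{k_\star}$ part: by Leibniz, $\|\partial^\alpha V\,\partial^\beta h\|_2$ with $|\alpha|+|\beta|=k_\star$ is bounded using $|\partial^\alpha V|\lesssim\langle y\rangle^{-2-|\alpha|}$ together with Hardy's inequality $\||y|^{-m}\partial^\beta h\|_2\lesssim\|h\|_{\dot H^{k_\star}}$, which is valid for $m=|\alpha|<d/2$ in the relevant range. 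For larger $|\alpha|$, I would use $\langle y\rangle^{-2-|\alpha|}\le\langle y\rangle^{-m}$ with $m$ slightly below $d/2$, interpolating against the $\dot H^s$ control. The $\dot H^s$ part: multiplication by a function with symbol-type bounds $|\partial^\nu V|\lesssim\langle y\rangle^{-2-|\nu|}$ is bounded on $\dot H^\sigma$ for $0\le\sigma<d/2$. I would prove this at integer endpoints via Hardy as above and then interpolate, possibly passing through $\dot H^{\lfloor s\rfloor}$ and $\dot H^{\lceil s\rceil}$. Complex conjugation is an isometry of each $\dot H^k$, so $h\mapsto V_{\mathrm{off}}\overline h$ is handled the same way. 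Since $\mathcal A_Q$ is only real-linear, I regard $\mathcal X_s$ as a real Banach space and apply the bounded perturbation theorem for $C_0$-groups (Pazy, Thm.~3.1.1) in both time directions. This yields the group $S_Q(\tau)$ with $\|S_Q(\tau)\|\le Me^{(\omega+\|\mathcal P\|)|\tau|}$.

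For local existence, consider
\[
\Phi(v)(\tau)=S_Q(\tau)v_0+\int_0^\tau S_Q(\tau-\sigma)\mathcal F(v(\sigma))\,d\sigma
\]
on the ball of radius $2M\|v_0\|_{\mathcal X_s}$ in $C([0,T];\mathcal X_s)$. By \eqref{eq:stability-nonlinear-estimate} and $\mathcal F(0)=0$, $\Phi$ maps the ball into itself and contracts for $T$ small depending on $\|v_0\|_{\mathcal X_s}$; continuity in time follows from strong continuity. Uniqueness in $C([0,T];\mathcal X_s)$ follows from Gronwall on the difference of two mild solutions, using the same Lipschitz bound on bounded sets.

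The main obstacle is the boundedness of $\mathcal P$ on the homogeneous $\dot H^s$ component with fractional $s$ close to $d/2$, where Hardy's inequality degenerates. I would first try to handle it by a weighted estimate $\||y|^{-s}h\|_2\lesssim\|h\|_{\dot H^s}$, which is valid for $s<d/2$, combined with the quadratic decay and the fractional Leibniz rule.
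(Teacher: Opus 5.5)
\textbf{Gap: the boundedness of $\mathcal A_Q-\mathcal A_0$ on $\mathcal X_s$ rests on a false premise and is not proved.} Your architecture is the paper's: an explicit free group, a bounded perturbation of a real-linear $C_0$-group, and a Duhamel contraction. The false premise is the claim that $V_{\mathrm{diag}},V_{\mathrm{off}}$ are not in $\mathcal X_s$. They are. The space is homogeneous, so failure of $L^2$ is irrelevant: indeed $Q\notin L^2$, yet $Q\in\mathcal X_s$. A function with symbol bounds $\langle y\rangle^{-2-j}$ lies in $\dot H^k$ for every $k>d/2-2$, and $s>s_c=d/2-2/(p-1)>d/2-2$, so the faster decay only helps. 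More directly, $p$ is odd, so $V_{\mathrm{diag}}=\frac{p+1}{2}(Q\overline Q)^{\frac{p-1}{2}}$ and $V_{\mathrm{off}}=\frac{p-1}{2}Q^{\frac{p+1}{2}}\overline Q^{\frac{p-3}{2}}$ are polynomials in $Q,\overline Q$. Lemma~\ref{lem:profile-stability-space} and the algebra estimate \eqref{eq:algebra-estimate} then give $V_{\mathrm{diag}},V_{\mathrm{off}}\in\mathcal X_s$, and hence $\|V_{\mathrm{diag}}h+V_{\mathrm{off}}\overline h\|_{\mathcal X_s}\le C\|h\|_{\mathcal X_s}$ at once. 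That one line is the paper's entire argument for this step, so your ``main obstacle'' is self-inflicted.

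Having discarded that route, your substitute does not close the $\dot H^s$ component. Interpolating between $\dot H^{\lfloor s\rfloor}$ and $\dot H^{\lceil s\rceil}$ needs Hardy's inequality at order $\lceil s\rceil$. But $\lceil s\rceil\ge d/2$ in all three cases: $2>3/2$, $2=2$, and $3>5/2$. For $d=3,5$, multiplication by $V$ is not even bounded on $\dot H^{\lceil s\rceil}$; test it on $\chi(\cdot/R)$ with $R\to\infty$. The fractional Leibniz fallback is only gestured at. It can be made to work, for example $\|Vh\|_{\dot H^s}\lesssim\|V\|_\infty\|h\|_{\dot H^s}+\|V\|_{\dot H^s}\|h\|_\infty$ together with \eqref{eq:stability-Linfty-embedding}. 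Note, though, that this uses $V\in\dot H^s$, which is exactly the fact you denied. As written, then, the crux of the first assertion is not established. Your $\dot H^{k_\star}$ estimate via Hardy plus interpolation, and the Duhamel contraction with Gronwall uniqueness, are fine.

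\textbf{A smaller slip in the free group.} The operators $i\Delta$ and $-\tfrac12 y\cdot\nabla$ do not commute, since $[\Delta,y\cdot\nabla]=2\Delta$; this is why the Schr\"odinger time is not $\tau$. Your first display (flow for time $1-e^{-\tau}$, then dilate) is correct and matches \eqref{eq:free-rescaled-propagator}. Your definition of $S_0$ with $e^{i(e^\tau-1)\Delta}$ applied before the dilation is not a group; that time goes with dilating first. Differentiating at $\tau=0$ cannot detect this, although the norm identity is unaffected.
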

\begin{proof}

    The unperturbed operator $\mathcal A_0$ generates the group $S_0(\tau) = e^{\tau \mathcal{A}_0}$, given by
	\begin{equation}\label{eq:free-rescaled-propagator}
		(S_0(\tau)h)(y)
		=e^{(-\frac{1}{p-1}+i\Omega)\tau}
		\bigl(e^{i(1-e^{-\tau})\Delta}h\bigr)(e^{-\frac{\tau}{2}}y),
		\qquad \tau\in\mathbb R.
	\end{equation}
	For $k=s,k_{\star}$, unitarity of the Schr\"odinger group on $\dot H^k$ and $\|h(\lambda\cdot)\|_{\dot H^k}=\lambda^{k-d/2}\|h\|_{\dot H^k}$ give
	\begin{equation*}
		\|S_0(\tau)h\|_{\dot H^k}
		=e^{\frac{(s_c-k)\tau}{2}}\|h\|_{\dot H^k},
	\end{equation*}
	since $s_c=d/2-2/(p-1)$. Thus we have for $\tau\geq 0$,
	\begin{equation*}
		\|S_0(\tau)h\|_{\mathcal{X}_s}
		\leq e^{\frac{(s_c-s)\tau}{2}}\|h\|_{\mathcal{X}_s}.
	\end{equation*}
	For radial $C_c^\infty$ data, continuity at $\tau=0$ follows from dominated convergence in Fourier space and continuity of dilations. Density and the preceding locally uniform operator bound extend this to every $h\in\mathcal X_s$, proving strong continuity.

	Moreover, $V_{\mathrm{diag}}$ and $V_{\mathrm{off}}$ belong to $\mathcal X_s$ by Lemma~\ref{lem:profile-stability-space} and the algebra estimate \eqref{eq:algebra-estimate}. Thus $\mathcal A_Q-\mathcal A_0$ is a bounded real-linear operator on $\mathcal X_s$. The bounded perturbation theorem yields a strongly continuous group $S_Q(\tau)=e^{\tau\mathcal A_Q}$, with $D(\mathcal A_Q)=D(\mathcal A_0)$. Now we show the local well-posedness of \eqref{eq:exact-perturbation-system} in $\mathcal X_s$. Using the Duhamel formula, we consider
	\begin{equation}\label{eq:duhamel}
		v(\tau)=S_Q(\tau)v_0+
		\int_0^\tau S_Q(\tau-\sigma)\mathcal F(v(\sigma))\,d\sigma.
	\end{equation}
	Estimate \eqref{eq:stability-nonlinear-estimate} makes $\mathcal F$ locally Lipschitz near zero. The local bound on $S_Q$ thus makes the equation \eqref{eq:duhamel} a contraction on a suitable ball in $C([0,T];\mathcal X_s)$ for $T>0$ small, proving local existence, uniqueness, and continuous dependence for \eqref{eq:exact-perturbation-system}.
\end{proof}

For the cutoff $\chi_R$ in the proof of Lemma~\ref{lem:profile-stability-space}, the estimate \eqref{eq:initial-cutoff-smallness} shows that localization is small in $\mathcal X_s$. The profile bounds and annular scaling give the estimate for the commutator $\|[\mathcal A_0,\chi_R]Q\|_{\dot H^k}\lesssim_{Q,s}R^{s_c-k}\to0$ for $k=s,k_\star$. Differentiating \eqref{eq:free-rescaled-propagator} on Schwartz data gives $C^\infty_{c,\mathrm{rad}}\subset D(\mathcal A_0)$. Also, the algebra estimate and \eqref{eq:initial-cutoff-smallness} imply $\chi_RN(Q)\to N(Q)$ in $\mathcal X_s$. Since $\chi_RQ\in D(\mathcal A_0)$ and
\begin{equation*}
	\mathcal A_0(\chi_RQ)
	=i\chi_RN(Q)+[\mathcal A_0,\chi_R]Q
	\longrightarrow iN(Q),
\end{equation*}
closedness of $\mathcal A_0$ yields $Q\in D(\mathcal A_0)$ and $\mathcal A_0Q=iN(Q)$. Thus the profile $Q$ is a mild solution of
\begin{equation}\label{eq:profile-mild-identity}
	Q=S_0(\tau)Q-i\int_0^\tau S_0(\tau-\sigma)N(Q)\,d\sigma.
\end{equation}
Consequently, subtracting $Q$ identifies a solution of the full nonlinear equation with the $\mathcal X_s$ perturbation.

\subsection{Spectral theory for the linearized operator}
\label{subsec:spectral-theory}
Our spectral analysis follows the strategy of obtaining dissipativity
modulo a compact perturbation; see
\cite{MerleRaphaelRodnianskiSzeftel2022,
MerleRaphaelRodnianskiSzeftel2022II,
BuckmasterCaoLaboraGomezSerrano2023}.
We represent the lower-order energy remainder by a compact operator
using the Riesz representation theorem, in a manner analogous to
the construction in \cite{chen2026vorticity}.

We use the matrix formulation and the associated
$\mathcal J$-invariance argument; see
\cite{Schlag2009StableManifolds,Li2023}.Fix $s\in(s_c,d/2)$ and set $\gamma_s=(s_c-s)/2<0$.
The spectral analysis takes place
on the complex Hilbert space
\begin{equation*}
	\mathbf X_s=\mathcal X_s\times\mathcal X_s,\qquad
	\|(f,g)\|_{\mathbf X_s}^2
	=\tfrac12\bigl(\|f\|_{\mathcal X_s}^2+\|g\|_{\mathcal X_s}^2\bigr).
\end{equation*}
Scalar perturbations are identified with the real-linear subspace
\begin{equation}\label{eq:matrix-real-subspace}
	\iota h=\begin{pmatrix}h\\\overline h\end{pmatrix},\qquad
	\mathbf X_{s,+}=\iota\mathcal X_s
	=\{\mathbf h\in\mathbf X_s:\mathcal J\mathbf h=\mathbf h\},\qquad
	\mathcal J\begin{pmatrix}f\\g\end{pmatrix}
	=\begin{pmatrix}\overline g\\\overline f\end{pmatrix}.
\end{equation}
Thus $\iota$ is a real-linear isometry and $\mathcal J$ is an antilinear isometric involution.

Let $Ch=\overline h$ and $\overline{\mathcal A_0}=C\mathcal A_0C =-i\Delta-\tfrac12 y\cdot\nabla_y-\frac{1}{p-1}-i\Omega$, with domain $CD(\mathcal A_0)$. The matrix linearized operator is
\begin{equation*}
	\mathbf A_Q=\mathbf A_0+\mathbf B,\qquad
	\mathbf A_0=
	\begin{pmatrix}\mathcal A_0&0\\0&\overline{\mathcal A_0}\end{pmatrix},
	\qquad
	\mathbf B=\begin{pmatrix}
		-iV_{\mathrm{diag}}          & -iV_{\mathrm{off}} \\
		i\overline{V_{\mathrm{off}}} & iV_{\mathrm{diag}}
	\end{pmatrix},
\end{equation*}
where the coefficients $V_{\mathrm{diag}}$, $V_{\mathrm{off}}$ are defined in \eqref{eq:linearized-coefficients}. Now we apply the bounded perturbation theorem as in the argument of Lemma~\ref{lem:linearized-flow} to the diagonal group
\begin{equation*}
	\mathbf S_0(\tau)=
	\begin{pmatrix}S_0(\tau)&0\\0&CS_0(\tau)C\end{pmatrix}.
\end{equation*}
Thus $\mathbf A_Q$, with domain $D(\mathcal A_0)\times CD(\mathcal A_0)$, generates a strongly continuous group $\mathbf S_Q(\tau)$. Direct computation gives
\begin{equation*}
	\mathcal J\mathbf A_Q=\mathbf A_Q\mathcal J,\qquad
	\mathbf A_Q\iota=\iota\mathcal A_Q,\qquad
	\mathbf S_Q(\tau)\iota=\iota S_Q(\tau).
\end{equation*}
The first identity holds on $D(\mathbf A_Q)$, which $\mathcal J$ preserves, and the second on $D(\mathcal A_Q)$; the group identity follows by uniqueness. In particular, the matrix evolution preserves $\mathbf X_{s,+}$ and agrees there with the scalar linearized evolution.
\begin{lem}\label{lem:minrev-energy}
	There is a bounded compact operator $\mathcal R$
	on $\mathbf X_s$ such that
	\begin{equation}\label{eq:minrev-energy}
		\operatorname{Re}\langle\mathbf A_Q\mathbf h,\mathbf h\rangle_{\mathbf X_s}
		\le \gamma_s\|\mathbf h\|_{\mathbf X_s}^2
		+\operatorname{Re}\langle\mathcal R\mathbf h,\mathbf h\rangle_{\mathbf X_s},
		\qquad \mathbf h\in D(\mathbf A_Q).
	\end{equation}
	The operator $\mathbf A_Q-\mathcal R$, with domain $D(\mathbf A_Q)$, generates a strongly continuous semigroup $\mathbf U$, and
	\begin{equation}\label{eq:minrev-corrected-decay}
		\|\mathbf U(\tau)\|_{\mathbf X_s\to\mathbf X_s}
		\le e^{\gamma_s\tau},\qquad \tau\ge0.
	\end{equation}
\end{lem}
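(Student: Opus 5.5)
The plan is to carry out the energy estimate separately in the two seminorms $\dot H^s$ and $\dot H^{k_\star}$ on each component of $\mathbf h=(f,g)$. We isolate the coercive part coming from $\mathbf A_0$ and show that everything produced by $\mathbf B$ is either absorbed into the margin $\gamma_s$ or is a form bounded on a space that embeds compactly into $\mathbf X_s$. The Riesz representation theorem then turns that form into a compact operator $\mathcal R$. The semigroup statement should follow from Lumer--Phillips.

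\emph{Step 1: the free part.} The proof of Lemma~\ref{lem:linearized-flow} gives $\|S_0(\tau)h\|_{\dot H^k}=e^{(s_c-k)\tau/2}\|h\|_{\dot H^k}$. Differentiating at $\tau=0$ yields
\[
\operatorname{Re}\langle\mathcal A_0h,h\rangle_{\dot H^k}=\tfrac{s_c-k}{2}\|h\|_{\dot H^k}^2 .
\]
For $k=s$ this equals $\gamma_s\|h\|_{\dot H^s}^2$. For $k=k_\star$ it is $\frac{s_c-k_\star}{2}\|h\|_{\dot H^{k_\star}}^2$, which leaves a margin of $\|V_{\mathrm{off}}\|_\infty$ below $\gamma_s$ by the choice of $k_\star$. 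The same identities hold for $\overline{\mathcal A_0}$.

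\emph{Step 2: the potential part.} For $\mathbf B$ we write
\[
\langle -iV_{\mathrm{diag}}f,f\rangle_{\dot H^{k_\star}}
=\sum_{|\alpha|=k_\star}\langle -i\partial^\alpha(V_{\mathrm{diag}}f),\partial^\alpha f\rangle_{L^2}.
\]
The top-order term $\langle -iV_{\mathrm{diag}}\partial^\alpha f,\partial^\alpha f\rangle$ is purely imaginary because $V_{\mathrm{diag}}$ is real, so it drops out of the real part. The off-diagonal top-order terms are bounded by $\|V_{\mathrm{off}}\|_\infty\|f\|_{\dot H^{k_\star}}\|g\|_{\dot H^{k_\star}}$. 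With the normalization of $\mathbf X_s$ this is absorbed by the margin from Step 1, and this is exactly why $k_\star>d/2+2\|V_{\mathrm{off}}\|_\infty$ was imposed. The lower-order commutator terms contain at least one derivative on the potentials. By \eqref{eq:linearized-coefficient-derivative-bounds} they are bounded by
\[
\sum_{j\ge1}\int\langle y\rangle^{-2-j}\,|\nabla^{k_\star-j}h|\,|\nabla^{k_\star}h|\,dy .
\]
Each such term has a decaying weight and strictly fewer derivatives on one factor. For the $\dot H^s$ seminorm with fractional $s$, the whole term $\operatorname{Re}\langle\mathbf B\mathbf h,\mathbf h\rangle_{\dot H^s}$ is a lower-order contribution. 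We handle it by writing $\langle |D|^{s}(V h),|D|^{s}h\rangle$, noting that $V$ decays like $\langle y\rangle^{-2}$ with symbol-type derivative bounds, so $h\mapsto |D|^{s}(Vh)$ is controlled from $\dot H^s$ with the decay making it compact, in the way sketched in Step 3.

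\emph{Step 3: compactness.} The remainder is the real part of a bounded sesquilinear form $b(\mathbf h,\mathbf h)$. We write it as $b(\mathbf h,\mathbf k)=\langle \mathcal R\mathbf h,\mathbf k\rangle_{\mathbf X_s}$ by Riesz representation, and show compactness by proving $|b(\mathbf h,\mathbf k)|\le \varepsilon\|\mathbf h\|\|\mathbf k\|+C_\varepsilon\|\chi_{R_\varepsilon}P_{\le N_\varepsilon}\mathbf h\|\,\|\mathbf k\|$, which exhibits $\mathcal R$ as a norm limit of finite-rank-type compact operators. The decomposition has two pieces. The first is a spatial cutoff at $|y|\le R$: outside it, the weight $\langle y\rangle^{-2}$ is $O(R^{-2})$, which gives the smallness. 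The second is a frequency cutoff: the terms carry strictly fewer derivatives, so high frequencies gain a negative power of $N$, while the low-frequency piece localized in a ball is compact by Rellich. For the $\dot H^s$ part we use a Hardy-type bound $\|\langle y\rangle^{-2}h\|_{\dot H^{s}}\lesssim\|h\|_{\dot H^{s}}$ (valid since $s<d/2$) together with the same splitting.

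I expect Step 3 to be the main obstacle, especially the fractional $\dot H^s$ potential term, since a bounded $\dot H^s$ perturbation need not be compact without both decay and smoothing. I would try first to use the $\dot H^{k_\star}$ control to gain regularity, which makes the low-frequency piece compact via the weight and Rellich.

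\emph{Step 4: the semigroup.} Since $\mathcal R$ is bounded, $\mathbf A_Q-\mathcal R$ generates a semigroup by the bounded perturbation theorem. By \eqref{eq:minrev-energy}, the operator $\mathbf A_Q-\mathcal R-\gamma_s$ is dissipative, so the estimate \eqref{eq:minrev-corrected-decay} follows from Lumer--Phillips. Equivalently, differentiating $\|\mathbf U(\tau)\mathbf h\|^2$ for $\mathbf h$ in the domain and using density gives the bound directly.
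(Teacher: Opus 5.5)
Your plan is essentially the paper's proof. It uses the same free identity $\operatorname{Re}\langle\mathcal A_0h,h\rangle_{\dot H^k}=\frac{s_c-k}{2}\|h\|_{\dot H^k}^2$, drops the real diagonal term and absorbs $V_{\mathrm{off}}$ at top order via the choice of $k_\star$, treats all of $|D|^s\mathbf B\mathbf h$ together with $[\nabla^{k_\star},\mathbf B]\mathbf h$ as a compact remainder, and finishes with Riesz representation (the paper's $\mathcal R=\mathcal E^*\mathcal K$) and bounded perturbation plus dissipativity.

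One point to tighten in Step 3: $h$ and its low-order derivatives need not lie in $L^2$, so tail smallness cannot come from $\sup_{|y|>R}\langle y\rangle^{-2}$ alone. The paper instead pairs the $L^\infty$ bound on low frequencies with $\|(1-\chi_L)\partial^\alpha u\|_{L^2}\lesssim L^{d/2-2-|\alpha|}$, and handles the $\dot H^s$ term with the algebra estimate and $\|(1-\chi_L)u\|_{\mathcal X_s}\lesssim L^{s_c-s}$. Your Hardy inequality would also work here.
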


\begin{proof}
	We first establish the compactness of the lower-order error terms $\mathbf B\mathbf h$ in the energy estimate. Let $\psi\in C_c^\infty(\mathbb R^d)$ be radial, equal to one on the unit ball, and supported in the ball of radius two. Let $h=h_{\mathrm{lo}}+h_{\mathrm{hi}}$, where $h_{\mathrm{lo}}=\mathcal{F}^{-1}\psi(|\xi|)\mathcal{F}h$ is the low frequency part of $h$. Since $s<d/2$, we have
	\begin{equation*}
		\|\partial^\beta h_{\mathrm{lo}}\|_\infty
		\lesssim_\beta
		\left(\int_{|\xi|\le2}|\xi|^{2|\beta|-2s}\,d\xi\right)^{1/2}
		\|h\|_{\dot H^s}
		\lesssim_\beta\|h\|_{\mathcal X_s}.
	\end{equation*}
	For the high-frequency part, we have $\|h_{\mathrm{hi}}\|_{H^{k_\star}}\lesssim\|h\|_{\dot H^{k_\star}}$. These estimates give the localization bound below. Let $\chi_L(y)=\chi(y/L)$, where $\chi\in C_c^\infty(B_2)$ is radial and equals one on $B_1$. For each fixed $L$, localization maps $\mathcal X_s$ boundedly into $H^{k_\star}$. Hence, choosing $\max\{s,d/2\}<r<k_\star$, Rellich compactness shows that $h\mapsto\chi_Lh$ is compact into $H^r$. Since multiplication by any of the factors $u\in\{V_{\mathrm{diag}},V_{\mathrm{off}},\overline{V_{\mathrm{off}}}\}$ is bounded on $H^r$ and $(\chi_Lu)h=u(\chi_Lh)$, the corresponding localized multiplication operators are compact from $\mathcal X_s$ into $\dot H^s$.

	For the tails, \eqref{eq:linearized-coefficient-derivative-bounds} and the same dyadic scaling argument as in Lemma~\ref{lem:profile-stability-space} give, for $u\in\{V_{\mathrm{diag}},V_{\mathrm{off}},\overline{V_{\mathrm{off}}}\}$,
	\begin{equation*}
		\|(1-\chi_L)u\|_{\mathcal X_s}
		\lesssim L^{\frac d2-2-s}
		\lesssim L^{s_c-s}.
	\end{equation*}
	Together with the algebra estimate \eqref{eq:algebra-estimate}, this gives
	\begin{equation*}
		\|(1-\chi_L)V_{\mathrm{diag}}h\|_{\mathcal{X}_s}
		+\|(1-\chi_L)V_{\mathrm{off}}h\|_{\mathcal{X}_s}
		\lesssim L^{s_c-s}\|h\|_{\mathcal X_s}.
	\end{equation*}

	Since $s>s_c$, the right-hand coefficient tends to zero. Therefore
	\begin{equation*}
		\mathbf h\longmapsto |D|^s\mathbf B\mathbf h
		\quad\text{is compact from }\mathbf X_s\text{ into }L^2(\mathbb R^d;\mathbb C^2).
	\end{equation*}

	Now let
	\begin{equation*}
		\mathcal C\mathbf h=[\nabla^{k_\star},\mathbf B]\mathbf h.
	\end{equation*}
	Each component is a finite sum of terms $(\partial^\alpha u)\partial^\beta h$, with $u\in\{V_{\mathrm{diag}},V_{\mathrm{off}},\overline{V_{\mathrm{off}}}\}$ where $|\alpha|\ge1$ and $|\alpha|+|\beta|=k_\star$. The localized term $\chi_L(\partial^\alpha u)\partial^\beta h$ is compact into $L^2$ by local $H^{k_\star}$ control, Rellich compactness, and $|\beta|\le k_\star-1$. For its tail, \eqref{eq:slow-profile-symbol-bounds} gives:
	\begin{align*}
		\|(1-\chi_L)(\partial^\alpha u)\partial^\beta h_{\mathrm{lo}}\|_{L^{2}}\lesssim \|(1-\chi_L)(\partial^\alpha u)\|_{L^{2}}\|\partial^\beta h_{\mathrm{lo}}\|_{L^{\infty}}
		 & \lesssim L^{d/2-2-|\alpha|}\|h\|_{\mathcal X_s}, \\
		\|(1-\chi_L)(\partial^\alpha u)\partial^\beta h_{\mathrm{hi}}\|_{L^{2}}\lesssim \|(1-\chi_L)(\partial^\alpha u)\|_{L^{\infty}} \|\partial^\beta h_{\mathrm{hi}}\|_{L^{2}}
		 & \lesssim L^{-2-|\alpha|}\|h\|_{\mathcal X_s}.
	\end{align*}
	Both exponents are negative because $d\le5$ and $|\alpha|\ge1$. Thus $\mathcal C$ is compact from $\mathbf X_s$ into $L^2(\mathbb R^d;\mathbb C^2)$ as well. It follows that
	\begin{equation*}
		\mathcal K\mathbf h=(|D|^s\mathbf B\mathbf h,\mathcal C\mathbf h)
		:\mathbf X_s\longrightarrow\mathcal Y
	\end{equation*}
	is compact, where $\mathcal Y$ is the Hilbert direct sum of these two $L^2$ spaces, with $\|(f,g)\|_{\mathcal Y}^2=\frac{1}{2}(\|f\|_{L^2}^2+\|g\|_{L^2}^2)$.

	Next we work on the energy estimate. The calculations are first made on radial Schwartz pairs. Differentiating the free-group norm identity following \eqref{eq:free-rescaled-propagator} gives
	\begin{equation*}
		\operatorname{Re}\langle |D|^r\mathbf A_0\mathbf h,
		|D|^r\mathbf h\rangle_{L^2}
		=\frac{s_c-r}{2}\|\mathbf h\|_{\dot H^r}^2,
		\qquad r=s,k_\star.
	\end{equation*}
	At the top order,
	\begin{equation*}
		\nabla^{k_\star}(\mathbf B\mathbf h)
		=\mathbf B\nabla^{k_\star}\mathbf h+\mathcal C\mathbf h.
	\end{equation*}
	Because $V_{\mathrm{diag}}$ is real, the diagonal terms of $\mathbf B$ make zero real contribution. For any pair of tensors $\mathbf w$,
	\begin{equation*}
		\operatorname{Re}\langle\mathbf B\mathbf w,\mathbf w\rangle_{L^2}
		\le\|V_{\mathrm{off}}\|_\infty\|\mathbf w\|_{L^2}^2.
	\end{equation*}
	Our choice of $k_\star$ implies
	\begin{equation*}
		\frac{s_c-k_\star}{2}+\|V_{\mathrm{off}}\|_\infty
		\le\frac{s_c-s}{2}=\gamma_s.
	\end{equation*}
	Define the bounded isometry
	\begin{equation*}
		\mathcal E\mathbf h=(|D|^s\mathbf h,\nabla^{k_\star}\mathbf h)
		:\mathbf X_s\longrightarrow\mathcal Y,
		\qquad \|\mathcal E\mathbf h\|_{\mathcal Y}=\|\mathbf h\|_{\mathbf X_s}.
	\end{equation*}
	Combining the two derivative levels gives
	\begin{equation}\label{eq:AQ-energy-estimate}
		\operatorname{Re}\langle\mathbf A_Q\mathbf h,\mathbf h\rangle_{\mathbf X_s}
		\le\gamma_s\|\mathbf h\|_{\mathbf X_s}^2
		+\operatorname{Re}\langle\mathcal K\mathbf h,
		\mathcal E\mathbf h\rangle_{\mathcal Y}.
	\end{equation}
	The free energy identity holds for every $\mathbf h\in D(\mathbf A_0)$. The estimate for the bounded perturbation $\mathbf B$ extends from radial Schwartz pairs to $\mathbf X_s$ by density. Since $D(\mathbf A_Q)=D(\mathbf A_0)$, their combination proves \eqref{eq:AQ-energy-estimate} on $D(\mathbf A_Q)$.

		Use inner products linear in the first argument. The sesquilinear form
		\begin{equation*}
			\mathfrak k(\mathbf h,\mathbf g)
			=\langle\mathcal K\mathbf h,\mathcal E\mathbf g\rangle_{\mathcal Y}
		\end{equation*}
		is bounded, since $|\mathfrak k(\mathbf h,\mathbf g)| \le\|\mathcal K\|\,\|\mathbf h\|_{\mathbf X_s}\|\mathbf g\|_{\mathbf X_s}$. By Riesz representation, there is a unique bounded complex-linear operator $\mathcal R:\mathbf X_s\to\mathbf X_s$ satisfying
		\begin{equation*}
			\langle\mathcal R\mathbf h,\mathbf g\rangle_{\mathbf X_s}
			=\mathfrak k(\mathbf h,\mathbf g),
			\qquad \mathcal R=\mathcal E^*\mathcal K.
		\end{equation*}

		The operator $\mathcal R$ is compact because $\mathcal K$ is compact and $\mathcal E^*$ is bounded. In particular,
		\begin{equation*}
			\operatorname{Re}\langle\mathcal R\mathbf h,\mathbf h\rangle_{\mathbf X_s}
			=\operatorname{Re}\langle\mathcal K\mathbf h,
			\mathcal E\mathbf h\rangle_{\mathcal Y}.
		\end{equation*}
		Thus the preceding energy estimate proves \eqref{eq:minrev-energy} directly.

		Since $\mathbf A_Q$ already generates $\mathbf S_Q$, bounded perturbation by $-\mathcal R$ gives the auxiliary group $\mathbf U$, with generator $\mathbf A_Q-\mathcal R$ and domain $D(\mathbf A_Q)$. This yields
	\begin{equation*}
		\operatorname{Re}\langle(\mathbf A_Q-\mathcal R)\mathbf h,\mathbf h\rangle_{\mathbf X_s}
		\le\gamma_s\|\mathbf h\|_{\mathbf X_s}^2,
		\qquad \mathbf h\in D(\mathbf A_Q).
	\end{equation*}
	Then
	\begin{equation*}
		\frac12\frac{d}{d\tau}\|\mathbf U(\tau)\mathbf h\|_{\mathbf X_s}^2
		=\operatorname{Re}\langle(\mathbf A_Q-\mathcal R)\mathbf U(\tau)\mathbf h,
		\mathbf U(\tau)\mathbf h\rangle_{\mathbf X_s}
		\le\gamma_s\|\mathbf U(\tau)\mathbf h\|_{\mathbf X_s}^2.
	\end{equation*}
	Gronwall's inequality and density prove \eqref{eq:minrev-corrected-decay}.

\end{proof}

We recall the following theorem.
\begin{thm}[{\cite[Corollary~IV.2.11]{EngelNagel2000}}]
	\label{cor:semigroup-growth-bound}
	Suppose that $A:D(A)\subset X\to X$ generates a strongly continuous
	semigroup $T=(T(t))_{t\geq 0}$ on a complex Banach space $X$.
	Denote its growth bound and essential growth bound by
	$\omega_0(T)$ and $\omega_{\mathrm{ess}}(T)$:
	\begin{align*}
		\omega_0(T)              & =\inf\{w\in\mathbb R:\exists M_w<\infty,\
		\|T(\tau)\|\le M_we^{w\tau}\text{ for all }\tau\ge0\},               \\
		\omega_{\mathrm{ess}}(T) & =\inf\{w\in\mathbb R:\exists M_w<\infty,\
		\|T(\tau)\|_{\mathrm{ess}}\le M_we^{w\tau}\text{ for all }\tau\ge0\},
	\end{align*}
	where $\|L\|_{\mathrm{ess}}=\inf\{\|L-K\|:K\text{ compact}\}$ and $\sigma(A)$ denotes the full spectrum of $A$. Then
	\begin{equation*}
		\omega_0(T)
		=\max\left\{\omega_{\mathrm{ess}}(T),
		\sup_{\lambda\in\sigma(A)}\operatorname{Re}\lambda\right\}.
	\end{equation*}
	For each real number $w>\omega_{\mathrm{ess}}(T)$, the set
	\begin{equation*}
		\sigma(A)\cap
		\bigl\{\lambda\in\mathbb{C}:
		\operatorname{Re}\lambda\geq w\bigr\}
	\end{equation*}
	is finite, and its associated spectral projection has finite rank. In particular, its elements are isolated eigenvalues of $A$ with finite algebraic multiplicity.
\end{thm}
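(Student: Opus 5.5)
Since this is a standard result quoted from \cite{EngelNagel2000}, I would prove it the way the semigroup literature does. The idea is to pass from $A$ to a single operator $T(t_0)$, $t_0>0$ fixed, and use spectral theory of bounded operators. Three facts do the work.

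\textbf{Step 1: radii.} First I relate the two growth bounds to spectral radii. Gelfand's formula and the semigroup property give
\begin{equation*}
r(T(t_0))=\lim_n\|T(nt_0)\|^{1/n}=e^{\omega_0t_0}.
\end{equation*}
The same argument in the Calkin algebra $\mathcal L(X)/\mathcal K(X)$ gives
\begin{equation*}
r_{\mathrm{ess}}(T(t_0))=\lim_n\|T(nt_0)\|_{\mathrm{ess}}^{1/n}=e^{\omega_{\mathrm{ess}}t_0}.
\end{equation*}
This uses that $\tau\mapsto\log\|T(\tau)\|$ and $\tau\mapsto\log\|T(\tau)\|_{\mathrm{ess}}$ are subadditive and locally bounded, so Fekete's lemma applies. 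It also uses that $\|\cdot\|_{\mathrm{ess}}$ is submultiplicative.

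\textbf{Step 2: spectrum outside the essential disk.} By the Riesz--Schauder/Browder theory (as in Nussbaum's theorem), the part of $\sigma(T(t_0))$ in $\{|z|>r_{\mathrm{ess}}(T(t_0))\}$ consists of isolated eigenvalues. Each has a finite-rank Riesz projection, and they can only accumulate at the circle of radius $r_{\mathrm{ess}}$. Hence for $w>\omega_{\mathrm{ess}}$ the set $\sigma(T(t_0))\cap\{|z|\ge e^{wt_0}\}$ is finite.

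Let $P$ be the finite-rank projection onto all these eigenvalues. It commutes with every $T(\tau)$, because it is a contour integral of the resolvent of $T(t_0)$, which commutes with the semigroup. Therefore:
\begin{itemize}
\item $\operatorname{Ran}P$ is a finite-dimensional invariant subspace contained in $D(A)$.
\item The restriction of $T$ to $\operatorname{Ran}P$ is a matrix exponential $e^{\tau A_P}$, where $A_P=A|_{\operatorname{Ran}P}$.
\item $\sigma(A)=\sigma(A_P)\cup\sigma(A|_{\ker P})$.
\end{itemize}

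\textbf{Step 3: spectral inclusion.} The standard identity
\begin{equation*}
e^{\lambda t}-T(t)=(\lambda-A)\int_0^te^{\lambda(t-s)}T(s)\,ds
\end{equation*}
yields $e^{t_0\sigma(A)}\subset\sigma(T(t_0))$. The same holds for the restricted semigroup on $\ker P$.

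\textbf{Combining.} The spectral inclusion applied to $T$ gives $\sup\operatorname{Re}\sigma(A)\le\omega_0$, and trivially $\omega_{\mathrm{ess}}\le\omega_0$.

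For the reverse inequality, suppose $\omega_0>\omega_{\mathrm{ess}}$. By Step 2 the spectral radius $e^{\omega_0t_0}$ is attained by an eigenvalue $\mu$ of $T(t_0)$ lying in $\operatorname{Ran}P$. In finite dimensions $\sigma(e^{t_0A_P})=e^{t_0\sigma(A_P)}$, so $\mu=e^{t_0\lambda}$ for some eigenvalue $\lambda$ of $A_P$, and $\operatorname{Re}\lambda=\omega_0$. This proves the max formula.

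For finiteness, take $\lambda\in\sigma(A)$ with $\operatorname{Re}\lambda\ge w>\omega_{\mathrm{ess}}$, and define $P$ using $w$. Then $e^{t_0\lambda}\notin\sigma(T(t_0)|_{\ker P})$, so by Step 3 for the restricted semigroup, $\lambda\notin\sigma(A|_{\ker P})$. Hence $\lambda\in\sigma(A_P)$. This set is finite, and its elements are eigenvalues whose generalized eigenspaces lie in the finite-dimensional space $\operatorname{Ran}P$. The corresponding spectral projection of $A$ is a sub-projection of $P$, so it has finite rank.

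\textbf{Main obstacle.} The delicate input is Step 2: identifying the essential spectral radius with the Calkin-algebra growth rate, and proving that spectrum beyond it consists of poles with finite-rank residues. This is the Nussbaum/Browder theory. I would quote it from bounded-operator Fredholm theory rather than reprove it.
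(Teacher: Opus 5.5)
Your proof is correct. The paper gives no argument of its own; it simply quotes \cite[Corollary~IV.2.11]{EngelNagel2000}. Your route is the standard proof behind that corollary: identify $e^{t_0\omega_{\mathrm{ess}}}$ with the essential spectral radius of $T(t_0)$, split off the finite-rank Riesz projection $P$ of $T(t_0)$ (which commutes with the semigroup), and transfer to $A$ through $X=\operatorname{Ran}P\oplus\ker P$ and the spectral inclusion $e^{t_0\sigma(A)}\subset\sigma(T(t_0))$. The points you leave implicit are routine: the companion identity $\int_0^t e^{\lambda(t-s)}T(s)(\lambda-A)x\,ds=(e^{\lambda t}-T(t))x$ on $D(A)$, which gives injectivity in Step~3, and the choice of $w\in(\omega_{\mathrm{ess}},\omega_0]$ when defining $P$ for the max formula.
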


By \cite[Proposition~IV.2.12]{EngelNagel2000}, compact perturbations preserve the essential growth bound, so
\begin{equation}\label{eq:minrev-essential-growth}
	\omega_{\mathrm{ess}}(\mathbf S_Q)
	=\omega_{\mathrm{ess}}(\mathbf U)
	\le \omega_0(\mathbf U)\le\gamma_s<0.
\end{equation}
Then in our case Corollary~\ref{cor:semigroup-growth-bound} gives the following proposition.
\begin{prop}\label{prop:minrev-finite-spectrum}
	For every $\delta>0$,
	\begin{equation*}
		\sigma(\mathbf A_Q)\cap\{\lambda:\operatorname{Re}\lambda\ge\gamma_s+\delta\}
	\end{equation*}
	is a finite set of isolated eigenvalues of finite algebraic multiplicity. In particular, the sum of the generalized eigenspaces with $\operatorname{Re}\lambda\ge0$ is finite-dimensional.
\end{prop}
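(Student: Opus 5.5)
This follows directly from the essential growth bound \eqref{eq:minrev-essential-growth} together with Corollary~\ref{cor:semigroup-growth-bound}, applied to the group $\mathbf S_Q$ on the complex Hilbert space $\mathbf X_s$ with generator $\mathbf A_Q$. Fix $\delta>0$ and set $w=\gamma_s+\delta$. We first check that $w>\omega_{\mathrm{ess}}(\mathbf S_Q)$. By Lemma~\ref{lem:minrev-energy}, $\mathbf U$ is generated by $\mathbf A_Q-\mathcal R$ and satisfies $\omega_0(\mathbf U)\le\gamma_s$. Since $\mathcal R$ is compact, \cite[Proposition~IV.2.12]{EngelNagel2000} gives $\omega_{\mathrm{ess}}(\mathbf S_Q)=\omega_{\mathrm{ess}}(\mathbf U)$. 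Moreover $\omega_{\mathrm{ess}}\le\omega_0$ always holds, because $\|T\|_{\mathrm{ess}}\le\|T\|$. Hence $\omega_{\mathrm{ess}}(\mathbf S_Q)\le\gamma_s<w$.

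Applying Corollary~\ref{cor:semigroup-growth-bound} with this $w$ shows that $\sigma(\mathbf A_Q)\cap\{\operatorname{Re}\lambda\ge\gamma_s+\delta\}$ is finite. The associated Riesz projection has finite rank. Each point of this set is therefore an isolated eigenvalue of finite algebraic multiplicity, which proves the first assertion.

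For the second assertion, choose $\delta=-\gamma_s/2>0$, which is possible because $\gamma_s<0$. Then $\{\operatorname{Re}\lambda\ge0\}\subset\{\operatorname{Re}\lambda\ge\gamma_s+\delta\}$, so only finitely many eigenvalues have $\operatorname{Re}\lambda\ge0$. Each of them has a finite-dimensional generalized eigenspace, namely the range of its Riesz projection. A finite sum of finite-dimensional spaces is finite-dimensional.

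The only delicate point is making sure the hypotheses of the cited results hold in our setting. The space $\mathbf X_s$ is a complex Banach space, indeed a Hilbert space, so Corollary~\ref{cor:semigroup-growth-bound} applies. Also, $\mathbf U$ is a bounded perturbation of $\mathbf S_Q$ by the compact operator $-\mathcal R$, which is exactly the situation covered by \cite[Proposition~IV.2.12]{EngelNagel2000}. Both facts were already established in Lemma~\ref{lem:minrev-energy}, so no genuine obstacle remains.
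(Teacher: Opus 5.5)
Your proposal is correct and follows the paper's argument exactly. Like you, the paper combines invariance of the essential growth bound under the compact perturbation $-\mathcal R$ (\cite[Proposition~IV.2.12]{EngelNagel2000}) with $\omega_0(\mathbf U)\le\gamma_s$ from Lemma~\ref{lem:minrev-energy} to obtain $\omega_{\mathrm{ess}}(\mathbf S_Q)\le\gamma_s<0$, and then reads off both assertions from Corollary~\ref{cor:semigroup-growth-bound}; your extra details (the inequality $\|T\|_{\mathrm{ess}}\le\|T\|$ and the choice $\delta=-\gamma_s/2$) just make explicit what the paper leaves implicit.
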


We now introduce spectral projections to separate the stable part of the linearized evolution from the finite-dimensional neutral and unstable parts. We denote the stable and unstable projections by $P_{\mathrm{st}}$ and $P_{\mathrm{un}}$, respectively.
\begin{prop}\label{prop:minrev-splitting}
	There are bounded real-linear projections $P_{\mathrm{un}}$ and
	$P_{\mathrm{st}}=I-P_{\mathrm{un}}$, commuting with $\mathcal A_Q$ and $S_Q$, such
	that $\operatorname{Ran}P_{\mathrm{un}}$ is finite-dimensional. For some
	$\omega>0$ and $C<\infty$,
	\begin{equation}\label{eq:minrev-stable}
		\|S_Q(\tau)P_{\mathrm{st}}\|_{\mathcal X_s\to \mathcal X_s}
		\le Ce^{-\omega\tau},\qquad \tau\ge0.
	\end{equation}
	For every $\eta>0$, we also have
	\begin{equation*}
		\|S_Q(-\tau)P_{\mathrm{un}}\|_{\mathcal X_s\to \mathcal X_s}
		\le C_\eta e^{\eta\tau},\qquad \tau\ge0.
	\end{equation*}
\end{prop}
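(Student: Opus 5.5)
We will work on the complex space $\mathbf X_s$ and descend to $\mathcal X_s$ using $\mathcal J$-invariance. By Proposition~\ref{prop:minrev-finite-spectrum} with $\delta=-\gamma_s/2$, the set $\Lambda=\sigma(\mathbf A_Q)\cap\{\operatorname{Re}\lambda\ge\gamma_s/2\}$ is a finite set of isolated eigenvalues of finite algebraic multiplicity. Let $\Gamma$ be a finite union of small circles enclosing $\Lambda$ and disjoint from the rest of the spectrum. Define the Riesz projection
\[
\mathbf P_{\mathrm{un}}=\frac1{2\pi i}\oint_\Gamma(\lambda-\mathbf A_Q)^{-1}\,d\lambda .
\]
It is bounded, has finite rank by Theorem~\ref{cor:semigroup-growth-bound}, and commutes with $\mathbf A_Q$ and $\mathbf S_Q(\tau)$, since the resolvent commutes with the semigroup. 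We then set $\mathbf P_{\mathrm{st}}=I-\mathbf P_{\mathrm{un}}$.

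\textbf{Step 1: descent to the real subspace.} Since $\mathcal J$ is antilinear and commutes with $\mathbf A_Q$, we have $\mathcal J(\lambda-\mathbf A_Q)^{-1}\mathcal J=(\bar\lambda-\mathbf A_Q)^{-1}$, so $\sigma(\mathbf A_Q)$ is symmetric under conjugation. Choose $\Gamma$ conjugation-symmetric. Conjugating the contour integral (the antilinearity flips $d\lambda/(2\pi i)$ together with the orientation of the reflected contour) gives $\mathcal J\mathbf P_{\mathrm{un}}\mathcal J=\mathbf P_{\mathrm{un}}$. Hence $\mathbf P_{\mathrm{un}}$ preserves $\mathbf X_{s,+}$. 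We then define
\[
P_{\mathrm{un}}=\iota^{-1}\mathbf P_{\mathrm{un}}\iota ,
\]
a bounded real-linear projection with finite-dimensional range. Commutation with $\mathcal A_Q$ and $S_Q$ follows from $\mathbf A_Q\iota=\iota\mathcal A_Q$ and $\mathbf S_Q\iota=\iota S_Q$.

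\textbf{Step 2: decay on the stable part.} The restriction of $\mathbf S_Q$ to $\operatorname{Ran}\mathbf P_{\mathrm{st}}$ is a $C_0$-semigroup whose generator is the part of $\mathbf A_Q$ there, with spectrum $\sigma(\mathbf A_Q)\setminus\Lambda\subset\{\operatorname{Re}\lambda<\gamma_s/2\}$. Its essential growth bound is at most $\omega_{\mathrm{ess}}(\mathbf S_Q)\le\gamma_s$ by \eqref{eq:minrev-essential-growth}, since restriction to a complemented invariant subspace is obtained by composing with the bounded projection and does not increase the essential norm. The main obstacle is to get a strict spectral gap. By Theorem~\ref{cor:semigroup-growth-bound}, only finitely many eigenvalues lie in $\{\operatorname{Re}\lambda\ge 3\gamma_s/4\}$, all with $\operatorname{Re}\lambda<\gamma_s/2$. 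So the spectral supremum of the restricted generator is some $\omega_1<\gamma_s/2<0$. Applying the same theorem to the restricted semigroup yields growth bound $\max\{\gamma_s,\omega_1\}<0$. Choosing $\omega\in(0,-\max\{\gamma_s,\omega_1\})$ then gives \eqref{eq:minrev-stable}, after restricting to $\mathbf X_{s,+}$ and using that $\iota$ is an isometry.

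\textbf{Step 3: backward bound on the unstable part.} On the finite-dimensional range of $\mathbf P_{\mathrm{un}}$, the group acts as $e^{\tau M}$ for a matrix $M$ whose eigenvalues all satisfy $\operatorname{Re}\lambda\ge\gamma_s/2$. This gives only $\|e^{-\tau M}\|\lesssim_\eta e^{(\eta-\gamma_s/2)\tau}$, not the claimed $e^{\eta\tau}$. The fix is to cut at $0$ rather than $\gamma_s/2$: take $\Lambda=\sigma(\mathbf A_Q)\cap\{\operatorname{Re}\lambda\ge0\}$, which is still finite by Proposition~\ref{prop:minrev-finite-spectrum}. Then all eigenvalues of $M$ have $\operatorname{Re}\lambda\ge0$, and the Jordan form gives $\|e^{-\tau M}\|\le C_\eta e^{\eta\tau}$ for $\tau\ge0$. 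The stable part then has spectrum in $\{\operatorname{Re}\lambda<0\}$. Because only finitely many eigenvalues lie in $\{\operatorname{Re}\lambda\ge\gamma_s/2\}$, this spectrum has supremum $\omega_1<0$, and the argument of Step 2 goes through unchanged with this $\omega_1$.
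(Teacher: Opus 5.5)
Your proposal is correct and, once you move the cut to $\{\operatorname{Re}\lambda\ge0\}$ as in your Step 3, it is essentially the paper's argument: a Riesz projection onto the finitely many eigenvalues in the closed right half-plane, $\mathcal J$-invariance of that projection to descend to $\mathcal X_s$, the growth-bound theorem applied on the stable range, and the Jordan form on the finite-dimensional unstable range. Your explicit control of the essential and spectral bounds of the restricted semigroup (the strict bound $\omega_1<0$) only spells out steps the paper covers by citation.
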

\begin{proof}

	By Proposition~\ref{prop:minrev-finite-spectrum}, there exists $\gamma_{s}<-\omega<0$ such that the set
	\begin{equation*}
		\Lambda_{\mathrm{un}}=\sigma(\mathbf A_Q)\cap\{\lambda:\operatorname{Re}\lambda\ge -\omega\}
		=\sigma(\mathbf A_Q)\cap\{\lambda:\operatorname{Re}\lambda\ge0\}
	\end{equation*}
	is finite and consists of isolated eigenvalues of finite algebraic multiplicity. Let $\mathbf P_{\mathrm{un}}$ be the spectral projection associated with $\Lambda_{\mathrm{un}}$, and set $\mathbf P_{\mathrm{st}}=I-\mathbf P_{\mathrm{un}}$. The standard spectral argument (e.g., \cite[Lemma~7.11]{BuckmasterCaoLaboraGomezSerrano2025}) gives the closed invariant decomposition
	\begin{equation*}
		\mathbf X_s=\operatorname{Ran}\mathbf P_{\mathrm{un}}
		\oplus\operatorname{Ran}\mathbf P_{\mathrm{st}}.
	\end{equation*}
	From Corollary~\ref{cor:semigroup-growth-bound} and \eqref{eq:minrev-essential-growth}, we have for $\tau\geq 0$,
	\begin{equation*}
		\|\mathbf{S}_{Q}(\tau)\mathbf{P}_{\mathrm{st}}\|\leq Ce^{-\omega \tau}.
	\end{equation*}
	From the definition of $\Lambda_{\mathrm{un}}$, all eigenvalues have nonnegative real part. Jordan normal form therefore gives, for all $\eta >0$ and $\tau\geq 0$,
	\begin{equation*}
		\|\mathbf{S}_{Q}(-\tau)\mathbf{P}_{\mathrm{un}}\|\leq C_{\eta}e^{\eta \tau}.
	\end{equation*}
	Moreover, since $\mathcal J$ from \eqref{eq:matrix-real-subspace} commutes with $\mathbf A_Q$, we have
	\begin{equation*}
		\mathcal{J}(z\mathbf{I}-\mathbf{A}_{Q})^{-1}\mathcal{J}=(\bar{z}\mathbf{I}-\mathbf{A}_{Q})^{-1}.
	\end{equation*}
	Let $\mathbf{P}_{\lambda}$ be the Riesz projection for $\lambda$. Then
	\begin{equation*}
		\mathcal{J}\mathbf{P}_{\lambda}\mathcal{J}=\mathbf{P}_{\bar{\lambda}}.
	\end{equation*}
	Since $\Lambda_{\mathrm{un}}$ is invariant under conjugation, we have $\mathcal{J}\mathbf{P}_{\mathrm{un}}\mathcal{J}=\mathbf{P}_{\mathrm{un}}$. Thus $\mathcal{J}$ commutes with the projection $\mathbf{P}_{\mathrm{un}}$. Hence $\mathbf{P}_{\mathrm{un}}$ preserves $\mathbf X_{s,+}$ and induces bounded real-linear projections
	\begin{equation*}
		P_\alpha=\iota^{-1}\mathbf P_\alpha\iota,
		\qquad\alpha\in\{\mathrm{un},\mathrm{st}\},
	\end{equation*}
	with the required commutation properties.
\end{proof}

We record the two spectral directions induced by the symmetries of \eqref{eq:nls}. Phase invariance and time translation give, respectively,
\begin{equation*}
	\mathcal A_Q(iQ)=0,
	\qquad
	\mathcal A_Q\left(-\frac{1}{p-1}Q+i\Omega Q
	-\frac12 y\cdot\nabla Q\right)
	=-\frac{1}{p-1}Q+i\Omega Q-\frac12 y\cdot\nabla Q.
\end{equation*}
Indeed, the first identity follows by differentiating the constant phase, and the second one follows by differentiating the blow-up time. They can also be checked directly from the profile equation. The symbol bounds of Lemma~\ref{lem:slow-profile-symbol-bounds} show that both eigenfunctions belong to $D(\mathcal A_Q)$. Hence both directions lie in $\operatorname{Ran}P_{\mathrm{un}}$, and thus this space has dimension at least two. The construction below selects all neutral and unstable components at once, and therefore requires no separate modulation of the blow-up time or of the constant phase. We do not determine whether the two symmetry modes exhaust this space.

\subsection{Stability and localization}
\label{subsec:global-stability}

The following proposition gives a finite-codimensional stability result for the linearized evolution. This result allows us to construct global solutions to the nonlinear equation.
\begin{prop}
	\label{prop:global-stability}
	For the fixed $s$ and $\omega$ above, and every $0<\mu<\omega$,
	there is $\epsilon_0>0$ such that, for every
	$\xi\in P_{\mathrm{st}}\mathcal X_s$ with
	$\|\xi\|_{\mathcal X_s}<\epsilon_0$, the perturbation equation
	\eqref{eq:exact-perturbation-system} admits a global solution $v$ with
	\begin{equation*}
		v(0)=\xi-\int_0^\infty
		S_Q(-\sigma)P_{\mathrm{un}}\mathcal F(v(\sigma))\,d\sigma
	\end{equation*}
	and
	\begin{equation}\label{eq:stationary-stability-decay}
		\sup_{\tau\geq0}e^{\mu\tau}\|v(\tau)\|_{\mathcal X_s}
		\lesssim\|\xi\|_{\mathcal X_s}.
	\end{equation}
	Moreover, for every sufficiently large $R$, such data can be chosen with $Q+v(0)=\chi_RQ+g_R$, where $g_R$ lies in a fixed finite-dimensional subspace of radial $C_c^\infty$ and tends to zero in every $H^m$ as $R\to\infty$.
\end{prop}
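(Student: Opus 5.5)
The plan is a Lyapunov--Perron construction in the weighted space
\begin{equation*}
	\mathcal Z_\mu=\Bigl\{v\in C([0,\infty);\mathcal X_s):\ \|v\|_{\mathcal Z_\mu}:=\sup_{\tau\ge0}e^{\mu\tau}\|v(\tau)\|_{\mathcal X_s}<\infty\Bigr\},
\end{equation*}
followed by a finite-dimensional fixed point for the localization. For $\xi\in P_{\mathrm{st}}\mathcal X_s$ define
\begin{equation*}
	\Gamma_\xi(v)(\tau)=S_Q(\tau)\xi+\int_0^\tau S_Q(\tau-\sigma)P_{\mathrm{st}}\mathcal F(v(\sigma))\,d\sigma-\int_\tau^\infty S_Q(\tau-\sigma)P_{\mathrm{un}}\mathcal F(v(\sigma))\,d\sigma.
\end{equation*}
Using Proposition~\ref{prop:minrev-splitting}, the first two terms are bounded by $Ce^{-\omega\tau}\|\xi\|+C\int_0^\tau e^{-\omega(\tau-\sigma)}e^{-2\mu\sigma}\,d\sigma\,\|v\|_{\mathcal Z_\mu}^2$, where I use the quadratic estimate \eqref{eq:stability-nonlinear-estimate} with $w=0$. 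The unstable integral is bounded by $C_\eta\int_\tau^\infty e^{\eta(\sigma-\tau)}e^{-2\mu\sigma}\,d\sigma\,\|v\|^2$; with $\eta<\mu$ this is $\lesssim e^{-2\mu\tau}$. Since $\mu<\omega$, all terms decay like $e^{-\mu\tau}$, so $\Gamma_\xi$ maps a ball of radius $2C\|\xi\|$ in $\mathcal Z_\mu$ into itself, and the difference version of \eqref{eq:stability-nonlinear-estimate} makes it a contraction when $\epsilon_0$ is small. The fixed point is a mild solution of \eqref{eq:exact-perturbation-system}; to see this, apply $S_Q(\tau-\tau')$ and use the group property on the finite-dimensional range of $P_{\mathrm{un}}$. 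Evaluating at $\tau=0$ gives exactly the stated formula for $v(0)$, and the bound \eqref{eq:stationary-stability-decay} holds. The fixed point also depends Lipschitz-continuously on $\xi$, with $v(0)=\xi+\Psi(\xi)$, where $\Psi:P_{\mathrm{st}}\mathcal X_s\to\operatorname{Ran}P_{\mathrm{un}}$ satisfies $\|\Psi(\xi)-\Psi(\xi')\|\lesssim\epsilon_0\|\xi-\xi'\|$.

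For localization, I choose a basis $e_1,\dots,e_n$ of $\operatorname{Ran}P_{\mathrm{un}}$ (real dimension $n$). Since radial $C_c^\infty$ is dense in $\mathcal X_s$ and $P_{\mathrm{un}}$ is bounded with finite rank, I can find radial $\phi_1,\dots,\phi_n\in C_c^\infty$ such that the matrix $(P_{\mathrm{un}}\phi_j)$ is invertible in the basis $e_k$. Let $W=\operatorname{span}_{\mathbb R}\{\phi_j\}$. Then $P_{\mathrm{un}}|_W:W\to\operatorname{Ran}P_{\mathrm{un}}$ is an isomorphism. We seek $g\in W$ with
\begin{equation*}
	\chi_RQ-Q+g=\xi+\Psi(\xi),\qquad \xi:=P_{\mathrm{st}}(\chi_RQ-Q+g).
\end{equation*}
The stable component is automatic, so the condition reduces to $P_{\mathrm{un}}(\chi_RQ-Q+g)=\Psi(P_{\mathrm{st}}(\chi_RQ-Q+g))$. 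Solving for $g$ gives
\begin{equation*}
	g=(P_{\mathrm{un}}|_W)^{-1}\bigl[\Psi(P_{\mathrm{st}}(\chi_RQ-Q+g))-P_{\mathrm{un}}(\chi_RQ-Q)\bigr].
\end{equation*}
This is a contraction on a small ball in the finite-dimensional space $W$, because $\Psi$ has a small Lipschitz constant and $\|\chi_RQ-Q\|_{\mathcal X_s}\lesssim R^{s_c-s}\to0$ by \eqref{eq:initial-cutoff-smallness}. The resulting $g_R$ satisfies $|g_R|_W\lesssim R^{s_c-s}$. Since all norms on $W$ are equivalent and $W\subset C_c^\infty$, it follows that $g_R\to0$ in every $H^m$. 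Smallness of $\xi$ below $\epsilon_0$ holds for $R$ large, and then $Q+v(0)=\chi_RQ+g_R$.

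I expect the main obstacle to be the bookkeeping in the Lyapunov--Perron step: showing that the backward unstable integral converges and that the fixed point is a genuine mild solution. This relies on the subexponential backward bound for $P_{\mathrm{un}}$ with $\eta<\mu$ and on $P_{\mathrm{un}}$ commuting with $S_Q$. A second point to check is that $\Psi$ is Lipschitz with a small constant, which follows from the same contraction estimates applied to the difference of two fixed points.
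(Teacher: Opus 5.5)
Your proposal is correct and follows the paper's proof essentially step by step: the same Lyapunov--Perron fixed point in the $e^{\mu\tau}$-weighted space, the graph map $\Psi(\xi)=P_{\mathrm{un}}v_\xi(0)$, and a contraction in a fixed finite-dimensional radial $C_c^\infty$ space $W$ with $P_{\mathrm{un}}|_W$ invertible to solve for $g_R$. The differences are minor. The paper takes $\eta<2\mu$ rather than $\eta<\mu$. It also records the sharper local bound $\|\Psi(\xi)-\Psi(\xi')\|\lesssim(\|\xi\|+\|\xi'\|)\|\xi-\xi'\|$, which your difference-of-fixed-points argument also yields. With this bound the localization contraction closes for large $R$ without shrinking $\epsilon_0$ after $W$ is fixed.
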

\begin{proof}
	To establish the decay estimate
\eqref{eq:stationary-stability-decay}, we follow the classical
Lyapunov--Perron construction; see, for example,
\cite[Section 9.2]{Teschl2012}. We then obtain smooth compactly
supported initial data by a finite-dimensional localization
argument related to the constructions in
\cite{LiZhou2025,Li2023}.

We prescribe the stable component $\xi$ and choose the unstable component to cancel the backward evolution of the nonlinear term.

	For $\xi\in P_{\mathrm{st}}\mathcal X_s$, solve the equation
	\begin{equation}\label{eq:stationary-lyapunov-perron}
		\begin{aligned}
			v(\tau)={} & S_Q(\tau)\xi
			+\int_0^\tau S_Q(\tau-\sigma)P_{\mathrm{st}}\mathcal F(v(\sigma))\,d\sigma
			-\int_\tau^\infty
			S_Q(\tau-\sigma)P_{\mathrm{un}}\mathcal F(v(\sigma))\,d\sigma.
		\end{aligned}
	\end{equation}
	Choose $0<\eta<2\mu$ in Proposition~\ref{prop:minrev-splitting}. We use the norm $\|v\|_\mu:=\sup_{\tau\geq0}e^{\mu\tau}\|v(\tau)\|_{\mathcal X_s}$. For $\|v\|_\mu\leq1$, \eqref{eq:stability-nonlinear-estimate} with $w=0$ gives, since $\mathcal F(0)=0$,
	\begin{equation*}
		\|\mathcal F(v(\sigma))\|_{\mathcal X_s}
		\leq C_{Q,s}\|v(\sigma)\|_{\mathcal X_s}^2
		\leq C_{Q,s}e^{-2\mu\sigma}\|v\|_\mu^2.
	\end{equation*}
	Here we used $\|v(\sigma)\|_{\mathcal X_s}\leq e^{-\mu\sigma}\|v\|_\mu\leq1$. Hence, for every $\tau\geq0$, we have estimates
	\begin{equation*}
		\begin{aligned}
			e^{\mu\tau}\left\|\int_0^\tau
			S_Q(\tau-\sigma)P_{\mathrm{st}}\mathcal F(v(\sigma))\,d\sigma
			\right\|_{\mathcal X_s}
			 & \leq CC_{Q,s}\|v\|_\mu^2
			\int_0^\tau e^{\mu\tau}e^{-\omega(\tau-\sigma)}e^{-2\mu\sigma}\,d\sigma   \\
			 & \leq\frac{CC_{Q,s}}{\omega-\mu}\|v\|_\mu^2,                            \\
			e^{\mu\tau}\left\|\int_\tau^\infty
			S_Q(\tau-\sigma)P_{\mathrm{un}}\mathcal F(v(\sigma))\,d\sigma
			\right\|_{\mathcal X_s}
			 & \leq C_\eta C_{Q,s}\|v\|_\mu^2
			\int_\tau^\infty e^{\mu\tau}e^{\eta(\sigma-\tau)}e^{-2\mu\sigma}\,d\sigma \\
			 & \leq\frac{C_\eta C_{Q,s}}{2\mu-\eta}\|v\|_\mu^2.
		\end{aligned}
	\end{equation*}
	For $\|v\|_\mu,\|w\|_\mu\leq1$, the same bounds hold with $\mathcal F(v)-\mathcal F(w)$ in the integrands and $\|v\|_\mu^2$ replaced by $(\|v\|_\mu+\|w\|_\mu)\|v-w\|_\mu$. Together with \eqref{eq:minrev-stable} and \eqref{eq:stability-nonlinear-estimate}, this makes the right-hand side of the equation \eqref{eq:stationary-lyapunov-perron} a contraction on a ball of radius $C\|\xi\|_{\mathcal X_s}$ in
	\begin{equation*}
		\left\{v\in C([0,\infty);\mathcal X_s):
		\sup_{\tau\geq0}e^{\mu\tau}\|v(\tau)\|_{\mathcal X_s}<\infty\right\}.
	\end{equation*}
	The contraction mapping theorem therefore gives a fixed point $v$ on $[0,\infty)$. Subtracting $S_Q(\tau)$ times \eqref{eq:stationary-lyapunov-perron} at $\tau=0$ from the same equation at time $\tau$ yields
	\begin{equation*}
		v(\tau)=S_Q(\tau)v(0)
		+\int_0^\tau S_Q(\tau-\sigma)\mathcal F(v(\sigma))\,d\sigma.
	\end{equation*}
	This shows the existence of a global solution to \eqref{eq:exact-perturbation-system}, with the decay estimate \eqref{eq:stationary-stability-decay}.

	Now we work on the localization. For each $\xi\in B_{\epsilon_0}(0)\cap P_{\mathrm{st}}\mathcal X_s$, let $v_\xi$ denote the unique fixed point constructed above for that stable datum. Define the map $\Phi:B_{\epsilon_0}(0)\cap P_{\mathrm{st}}\mathcal X_s \to P_{\mathrm{un}}\mathcal X_s$ by
	\begin{equation*}
		\Phi(\xi):=P_{\mathrm{un}}v_\xi(0)
		=-\int_0^\infty S_Q(-\sigma)P_{\mathrm{un}}
		\mathcal F(v_\xi(\sigma))\,d\sigma.
	\end{equation*}
	The integral converges in $\mathcal X_s$ by the preceding estimates, and $v_\xi(0)=\xi+\Phi(\xi)$. In particular, $\Phi(0)=0$ since $v_0=0$. The same estimates for differences give
	\begin{equation}\label{eq:unstable-data-map}
		\|\Phi(\xi)-\Phi(\eta)\|_{\mathcal X_s}
		\lesssim
		\bigl(\|\xi\|_{\mathcal X_s}+\|\eta\|_{\mathcal X_s}\bigr)
		\|\xi-\eta\|_{\mathcal X_s}.
	\end{equation}

	To localize, choose a finite-dimensional real space $F\subset C^\infty_{c,\mathrm{rad}}$ for which $P_{\mathrm{un}}|_F:F\to P_{\mathrm{un}}\mathcal X_s$ is an isomorphism. Density of $C^\infty_{c,\mathrm{rad}}$ in $\mathcal X_s$ ensures such a choice. By \eqref{eq:initial-cutoff-smallness}, $(\chi_R-1)Q\to0$ in $\mathcal X_s$. Now we show that the equation
	\begin{equation}\label{eq:localized-data-selection}
		g_R=(P_{\mathrm{un}}|_F)^{-1}
		\left[\Phi\bigl(P_{\mathrm{st}}((\chi_R-1)Q+g_R)\bigr)
		-P_{\mathrm{un}}((\chi_R-1)Q)\right]
	\end{equation}
	is a contraction on a ball in $F$ of radius $C\|(\chi_R-1)Q\|_{\mathcal X_s}$ when $R$ is sufficiently large. Denote the right-hand side of \eqref{eq:localized-data-selection} by $T_R(g)$. By \eqref{eq:unstable-data-map}, $\Phi(0)=0$, and boundedness of the projections and $(P_{\mathrm{un}}|_F)^{-1}$, for $g,h$ in this ball we have
	\begin{equation*}
		\begin{aligned}
			\|T_R(g)\|_{\mathcal X_s} & \leq K\|(\chi_R-1)Q\|_{\mathcal X_s}
			+K(1+C)^2\|(\chi_R-1)Q\|_{\mathcal X_s}^2,                                                  \\
			\|T_R(g)-T_R(h)\|_{\mathcal X_s}
			                          & \leq K(1+C)\|(\chi_R-1)Q\|_{\mathcal X_s}\|g-h\|_{\mathcal X_s}
		\end{aligned},
	\end{equation*}
	with $K$ independent of $C$ and $R$. Fixing $C>2K$ and then taking $R$ large makes $T_R$ preserve this ball and have Lipschitz constant less than one; its arguments also lie in the domain of $\Phi$. Its solution satisfies $P_{\mathrm{un}}((\chi_R-1)Q+g_R)=\Phi(P_{\mathrm{st}}((\chi_R-1)Q+g_R))$, so $v(0)=(\chi_R-1)Q+g_R$ lies on the constructed graph and $Q+v(0)=\chi_RQ+g_R\in C_c^\infty$. Since $F$ is fixed and finite-dimensional, $g_R\to0$ in every $H^m$.
\end{proof}

\begin{proof}[Proof of Theorem~\ref{thm:blowup}]
	Fix $0<\mu<\omega$ and choose localized data from Proposition~\ref{prop:global-stability}. By \eqref{eq:stability-Linfty-embedding} and \eqref{eq:stationary-stability-decay}, $V=Q+v$ is uniformly bounded and
	\begin{equation*}
		\|V(\tau)-Q\|_{L^\infty}\lesssim e^{-\mu\tau}.
	\end{equation*}
	Its initial datum is smooth and compactly supported. Rewriting the Duhamel formula for $v$ relative to $S_0$ and using \eqref{eq:profile-mild-identity} shows that $V$ solves \eqref{eq:complex-rescaled-evolution}. Higher regularity persists, and the tame product estimate yields, for every integer $m\geq3$ for which $V(0)\in H^m$,
	\begin{equation}\label{eq:total-field-Hm-bound}
		\|V(\tau)\|_{H^m}
		\leq\|V(0)\|_{H^m}
		\exp\left(\left(\frac d4-\frac{1}{p-1}\right)\tau+
		C_m\int_0^\tau\|V(\sigma)\|_\infty^{p-1}\,d\sigma\right).
	\end{equation}
	Consequently, a bounded $L^\infty$ norm prevents finite-time breakdown and propagates all initial Sobolev regularity. Uniqueness and \eqref{eq:total-field-Hm-bound} therefore give $V\in C([0,\infty);H^m)$ for every integer $m\geq0$. For any $T>0$, undoing \eqref{eq:coordinates} gives
	\begin{equation*}
		u(t,x)=(T-t)^{-\frac{1}{p-1}}e^{-i\Omega\tau}V(\tau,y).
	\end{equation*}
	Therefore,
	\begin{equation*}
		\left\|(T-t)^{\frac{1}{p-1}}e^{i\Omega\tau}
		u\left(t,\sqrt{T-t}\,\cdot\right)-Q\right\|_\infty
		=O((T-t)^\mu),
	\end{equation*}
	and hence
	\begin{equation*}
		\|u(t)\|_\infty=(T-t)^{-\frac{1}{p-1}}
		\left(\|Q\|_\infty+O((T-t)^\mu)\right).
	\end{equation*}
	The resulting solution lies in $C([0,T);H^m)$ for every $m$, with smooth compactly supported initial data. Since $Q(0)=A>0$, we have $\|Q\|_\infty>0$, and $T$ is its maximal time of existence. Finally, \eqref{eq:slow-profile-symbol-bounds} gives $|Q(r)|\gtrsim r^{-2/(p-1)}$ for $r\geq1$. Therefore,
	\begin{equation*}
		\int_{\mathbb R^d}|Q(x)|^{\frac{d(p-1)}{2}}\,dx
		\gtrsim \int_1^\infty r^{d-1}r^{-d}\,dr
		=\int_1^\infty \frac{dr}{r}=\infty,
	\end{equation*}
	so $Q\notin L^{d(p-1)/2}$. As $V(\tau)\to Q$ pointwise, Fatou's lemma gives $\|V(\tau)\|_{L^{d(p-1)/2}}\to\infty$. Sobolev embedding and scaling invariance yield
	\begin{equation*}
		\|u(t)\|_{\dot H^{s_c}}
		\gtrsim\|u(t)\|_{L^{\frac{d(p-1)}{2}}}
		=\|V(\tau)\|_{L^{\frac{d(p-1)}{2}}}\longrightarrow\infty,
	\end{equation*}
	which completes the proof.
\end{proof}

\begingroup
\hbadness=10000
\bibliographystyle{plain}
\bibliography{references}
\endgroup

\begin{tabular}{l}
	\textbf{Kevin Buck}                          \\
	{Department of Mathematics}                  \\
	{Brown University}                           \\
	{151 Thayer Street}                          \\
	{Providence, RI 02912, USA}                  \\
	{Email: kevin\_buck@brown.edu}               \\ \\
	\textbf{Alvaro Carballeira}                  \\
	{Department of Mathematics}                  \\
	{Brown University}                           \\
	{151 Thayer Street}                          \\
	{Providence, RI 02912, USA}                  \\
	{Email: alvaro\_carballeira\_mora@brown.edu} \\ \\
	\textbf{Javier G\'omez-Serrano}              \\
	{Department of Mathematics}                  \\
	{Brown University}                           \\
	{151 Thayer Street}                          \\
	{Providence, RI 02912, USA}                  \\
	{Email: javier\_gomez\_serrano@brown.edu}    \\ \\
	\textbf{Jia Shi}                             \\
	{Department of Mathematics}                  \\
	{Indiana University Bloomington}             \\
	{831 E. 3rd St.}                             \\
	{Bloomington, IN 47405, USA}                 \\
	{Email: js289@iu.edu}                        \\
\end{tabular}

\end{document}